\theoremstyle{plain}
\newtheorem{theo}{Theorem}[section]
\newtheorem*{theo*}{Theorem}
\newtheorem{prop}[theo]{Proposition}
\newtheorem{lemm}[theo]{Lemma}
\newtheorem{coro}[theo]{Corollary}
\newtheorem{defi}[theo]{Definition}
\theoremstyle{definition}
\newtheorem{rema}[theo]{Remark}
\DeclareMathOperator{\cnx}{div}
\DeclareMathOperator{\cn}{div}
\DeclareMathOperator{\RE}{Re}
\DeclareMathOperator{\Op}{Op}
\DeclareMathOperator{\diff}{d}
\DeclareSymbolFont{pletters}{OT1}{cmr}{m}{sl}
\DeclareMathSymbol{s}{\mathalpha}{pletters}{`s}
\def\ah{\arrowvert_{y=h}}
\def\ba{\begin{align}}
\def\bad{\begin{aligned}}
\def\be{\begin{equation}}
\def\ea{\end{align}}
\def\ead{\end{aligned}}
\def\ee{\end{equation}}
\def\e{\eqref}
\def\dsigma{\diff \! \sigma}
\def\deta{\diff \! \eta}
\def\dt{\diff \! t}
\def\dx{\diff \! x}
\def\dxi{\diff \! \xi}
\def\dy{\diff \! y}
\def\fract{\frac{\diff}{\dt}}
\def\fractt{\frac{\diff^2}{\dt^2}}
\def\cnxy{\cn_{x,y}}
\def\defn{\mathrel{:=}}
\def\Deltayx{\Delta_{x,y}}
\def\eps{\varepsilon}
\def\la{\left\vert}
\def\lA{\left\Vert}
\def\le{\leq}
\def\les{\lesssim}
\def\leo{}
\def\mez{\frac{1}{2}}
\def\partialx{\nabla}
\def\ra{\right\vert}
\def\rA{\right\Vert}
\def\tdm{\frac{3}{2}}
\def\xN{\mathbf{N}}
\def\xR{\mathbf{R}}
\def\xT{\mathbf{T}}
\def\cG{\mathcal{G}}
\numberwithin{equation}{section}
\begin{document}

\pagestyle{plain}

\date{}
\title{Lyapunov functions, Identities and the Cauchy problem for the Hele-Shaw equation} 
\author{Thomas Alazard, Nicolas Meunier, Didier Smets}

\maketitle

\begin{center}{\large \textbf{Abstract}}
\end{center}
\vspace{5mm}
This article is devoted to the study of the Hele-Shaw equation. 
We introduce an approach inspired by the water-wave theory. 
Starting from a reduction to the boundary, introducing the 
Dirichlet to Neumann operator and exploiting various cancellations, we exhibit 
parabolic evolution equations for the horizontal and vertical traces of the velocity on the free surface. 
This allows to quasi-linearize the equations in a natural way. 
By combining these exact identities with convexity inequalities, 
we prove the existence of hidden Lyapunov functions of different natures. 
We also deduce from these identities and previous works on the water wave problem 
a simple proof of the well-posedness of the Cauchy problem. 
The analysis contains two side results of independent interest. Firstly, we give a principle 
to derive estimates for the modulus of continuity of a PDE under general assumptions on the flow. 
Secondly we prove and give applications of a convexity inequality for the Dirichlet to Neumann operator.

\section{Introduction}

Consider a $d$-dimensional 
fluid domain $\Omega$,   
located underneath a free surface $\Sigma$ 
given as a graph, so that at time $t\ge 0$
\begin{align*}
\Omega(t)&=\{ (x,y) \in \xT^{n}\times \xR\,;\, y < h(t,x)\}, \quad n=d-1,\\
\Sigma(t)&= \{ (x,y) \in \xT^{n}\times \xR\,;\, y = h(t,x)\},
\end{align*}
where $\xT^n$ denotes a $n$-dimensional torus (our analysis applies also when $\xT^n$ is replaced by $\xR^n$). 
In the Eulerian coordinate system, the unknowns are the velocity field 
$v$, the scalar pressure $P$ and the free surface elevation~$h$. 
The Hele-Shaw equation described the dynamics of an incompressible 
liquid whose velocity obeys Darcy's law, so that
\be\label{Darcy}
\cnxy v=0 \quad\text{ and }\quad v=-\nabla_{x,y} (P+gy) \quad \text{in }\Omega,
\ee
where $g$ is the acceleration of gravity. These equations are supplemented by the boundary conditions:
\be
\left\{
\begin{aligned}
&P=0 && \text{on }y=h(t,x),\\
&\partial_t h=\sqrt{1+|\partialx h|^2} \, v\cdot n && \text{on }y=h(t,x),\\
\end{aligned}
\right.
\ee
where $\nabla=\nabla_x$ and $n$ is the outward unit normal to $\Sigma$, given by
$$
n=\frac{1}{\sqrt{1+|\nabla h|^2}} \begin{pmatrix} -\nabla h \\ 1 \end{pmatrix}.
$$

There are many possible ways to study the Hele-Shaw equation: to mention a few approaches 
we quote various PDE methods based on $L^2$-energy estimates (see 
the works of Chen~\cite{Chen-ARMA-1993}, C{\'o}rdoba, C{\'o}rdoba and Gancedo \cite{CCG-Annals}, Kn{\"u}pfer and Masmoudi~\cite{Knupfer-Masmoudi-ARMA-2015}, 
G{\"u}nther and Prokert~\cite{Gunther-Prokert-SIAM-2006}, Cheng, Granero-Belinch{\'o}n and Shkoller~\cite{Cheng-Belinchon-Shkoller-AdvMath}), there are also 
methods based on functional analysis tools and maximal estimates 
(see Escher and Simonett~\cite{Escher-Simonett-ADE-1997}, the results reviewed in the book 
by Pr\"uss and Simonett~\cite{Pruss-Simonett-book} and Matioc~\cite{Matioc1,Matioc2}) or methods using 
harmonic analysis tools and contour integrals (see the numerous 
results reviewed in the survey papers by Gancedo~\cite{GancedoSEMA} or Granero-Belinch{\'o}n and Lazar~\cite{GraneroLazar}). 
For the related Muskat equation (a two-phase Hele-Shaw problem), maximum principles have played a key role to study 
the Cauchy problem, see \cite{CCFG-ARMA-2016,CGSV-AIHP2017,Cameron-APDE,Cordoba-Lazar-H3/2} following 
the pioneering work of Constantin, C\'{o}rdoba, Gancedo, 
Rodr\'{i}guez-Piazza and Strain~\cite{CCGRPS-AJM2016}. 
Such maximum principles have been obtained for general viscosity solutions of the Hele-Shaw equation 
by Kim~\cite{Kim}, see also the recent work of Chang-Lara, Guillen and Schwab~\cite{ChangLaraGuillenSchwab}.

In this paper, we introduce 
another approach inspired by the analysis of the water-wave equations: 
we use the Dirichlet to Neumann operator to reduce the Hele-Shaw equation to an equation 
on the free surface and then quasi-linearize the equation thus obtained. To do so, we begin by introducing the potential
$$
\phi=P+gy.
$$
Since the velocity $v$ is divergence free, $\phi$ is harmonic, that is $\Delta_{x,y} \phi=0$. 
Consequently, $\phi$ 
is fully determined by the knowledge of its trace at the free surface, which is $\phi\arrowvert_{y=h}=g h$ since 
$P\arrowvert_{y=h}=0$. 
This explains that the problem can be written as an evolution equation 
involving only the unknown $h$. To write this equation, 
we need the Dirichlet to Neumann operator. This operator maps a function defined on the free surface 
to the normal derivative of its harmonic extension. Namely, for any function 
$\psi=\psi(t,x)$, consider its harmonic extension $\varphi=\varphi(t,x,y)$ solution to 
\be\label{defi:varphipsi}
\Delta_{x,y}\varphi=0 \quad \text{in }\Omega(t),\quad \varphi(t,x,h(t,x))=\psi(t,x).
\ee
Then the Dirichlet to Neumann operator $G(h)$ is defined by 
\begin{align*}
(G(h) \psi)  (t,x)&=
\sqrt{1+|\partialx h|^2}\,
\partial _n \varphi\arrowvert_{y=h(t,x)}\\
&=(\partial_y \varphi)(t,x,h(t,x))-\partialx h (t,x)\cdot (\partialx \varphi)(t,x,h(t,x)).
\end{align*}
Now observe that Darcy's law implies that
$v\cdot n=-\partial_n \phi$. Since $\phi=P+gy$ is the harmonic extension of $g h$, we conclude that 
$$
\sqrt{1+|\partialx h|^2}\, v\cdot n=-G(h)h.
$$
Consequently, $h$ solves the evolution equation
\begin{equation*}
\partial_{t}h+g G(h)h=0.
\end{equation*}
Here the modulus of the constant $g$ is irrelevant since one can always modify it by rescaling the equation in time. 
Assuming that $g>0$, we obtain the following evolution equation for $h$,
\begin{equation}\label{n10}
\partial_{t}h+G(h)h=0.
\end{equation}
This equation is analogous to the Craig-Sulem-Zakharov 
equation in water-wave theory (see~\cite{Zakharov1968,CrSu,Bertinoro}). 
Our first goal is to show how various results developed in the study of the water wave problem 
could be used to study the Cauchy problem for the Hele-Shaw equation. 
Our second and main goal is to find various identities and Lyapunov functionals for the Hele-Shaw equation. 

This paper contains various complementary results whose statements are gathered in the next section to highlight the links between them. They are of different kinds:
\begin{enumerate}
\item \textbf{Identities and the Cauchy problem:} we derive several new exact equations for the Hele-Shaw equation (see Proposition~\ref{Prop:eqBV}). 
Moreover, we deduce a simple proof of the well-posedness of the Cauchy problem in $H^s$ with $s>1+n/2$ in any dimension $n$, by combining the above mentioned identities with 
the paradifferential analysis of the Dirichlet-to-Neumann operator introduced in \cite{AM,ABZ1,ABZ3} (see Theorem~\ref{T:Cauchy}).
\item \textbf{Lyapunov functionals:} this is the most original part of this work. 
We derive several hidden decaying functionals which are of different natures. 
Firstly  we derive by an abstract general principle of independent interest 
a maximum principle for the slope. We also prove the same result by an $L^2$-type energy estimate which allow 
us to prove: $i)$ a new maximum principle for the time derivative, $ii)$ $L^p$-decay estimates for some special 
derivatives. As an application, we deduce a third maximum principle which gives a maximum principle 
for the inverse of the Rayleigh coefficient. 
Eventually, we obtain new Lyapunov functionals which give control of a higher order energy. 
\end{enumerate}

\section{Main results}

\subsection{Cauchy problem}

The main goal of this paper is to find exact identities and Lyapunov functionals for the Hele-Shaw equation. 
As a by-product of this analysis, we shall obtain a simple proof of the well-posedness of the Cauchy problem. 
We begin by the latter result, since it justifies the existence of the regular solutions we will consider. 

As recalled in the introduction, the Cauchy problem for the Hele-Shaw equation has been studied in three different cases: for 
weak solutions, for 
viscosity solutions and also for classical solutions. Here we are interested in classical solutions with initial data 
in Sobolev spaces. Let us recall that 
$H^s(\xT^n)$ is the Sobolev space of periodic functions $h$ such that $(I-\Delta)^{s/2}h$ belongs to 
$L^2(\xT^n)$, where $(I-\Delta)^{s/2}$ is the Fourier multiplier with symbol $(1+\la\xi\ra^2)^{s/2}$. 
Cheng, Granero-Belinch{\'o}n and Shkoller~\cite{Cheng-Belinchon-Shkoller-AdvMath} studied the Cauchy problem in a very general setting. In particular, their results show that the Cauchy problem for the 
Hele-Shaw equation is well-posed 
for initial data in $H^s(\xT)$ with $s\ge 2$. We will prove 
that the same result holds for any $n\ge 1$ and any $s>n/2+1$. A key remark here is that the proof will be in fact 
a straightforward consequence of identities obtained later in this paper and the easy part of the paradifferential analysis in \cite{AM,ABZ1,ABZ3}. 
(We refer the reader to \cite{Matioc1,Matioc2,Cordoba-Lazar-H3/2,AL} for related results for the Muskat equation, as well as the references therein.)

\begin{theo}\label{T:Cauchy}
Let $n\ge 1$ and consider a real number $s>n/2+1$. For any initial data $h_0$ in $H^s(\xT^n)$, there exists 
a time $T>0$ such that the Cauchy problem
\begin{equation}\label{Hele-Shaw100}
\partial_{t}h+G(h)h=0,\quad h\arrowvert_{t=0}=h_0,
\end{equation}
has a unique solution satisfying
$$
h \in C^0([0,T];H^s(\xT^n))\cap C^1([0,T];H^{s-1}(\xT^n))\cap L^2([0,T];H^{s+\mez}(\xT^n)).
$$
Morevoer, $h$ belongs to $C^\infty((0,T]\times \xT^n)$. 
\end{theo}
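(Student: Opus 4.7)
The plan is to treat \eqref{Hele-Shaw100} as a quasi-linear parabolic equation of order $1$ and derive a single a priori estimate in $H^s$, controlling both the $L^\infty_t H^s$ norm and the parabolic gain $L^2_t H^{s+1/2}$. The starting point is the paralinearization of the Dirichlet-to-Neumann operator from \cite{AM,ABZ1,ABZ3}: for $h\in H^s$ with $s>n/2+1$, one has
\[
G(h)\psi = T_\lambda \psi + R(h)\psi,
\]
where $\lambda(x,\xi)$ is a symbol of order $1$ whose principal part $\sqrt{(1+|\nabla h|^2)|\xi|^2 -(\nabla h\cdot \xi)^2}$ is elliptic and positive, and where $R(h)$ maps $H^\sigma$ to $H^{\sigma+\frac{1}{2}}$ (or better) with norm controlled by $\|h\|_{H^s}$. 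Combined with the identities announced in Proposition~\ref{Prop:eqBV}, which recast the time derivatives of $h$ (and of its derivatives) into a quasi-linearized form with an elliptic first-order principal part plus $L^2$-controlled remainders, this reduces the Cauchy problem to an equation of the schematic shape
\[
\partial_t u + T_\lambda u = f(h), \qquad u=\langle D_x\rangle^s h,
\]
with $f$ bounded in $L^2$ by a nonlinear function of $\|h\|_{H^s}$.

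\textbf{A priori estimate and existence.} Taking the $L^2$ inner product with $u$, commuting $\langle D_x\rangle^s$ through and using the symbolic calculus to handle the commutators (of order $s$), G{\aa}rding's inequality applied to the positive principal symbol of $\lambda$ yields
\[
\tfrac{1}{2}\tfrac{d}{dt}\|h\|_{H^s}^2 + c\|h\|_{H^{s+\frac{1}{2}}}^2 \;\leq\; \mathcal{F}\bigl(\|h\|_{H^s}\bigr)\|h\|_{H^s}^2,
\]
for some nondecreasing $\mathcal{F}$. This furnishes a time $T=T(\|h_0\|_{H^s})>0$ on which solutions remain bounded in $L^\infty_T H^s \cap L^2_T H^{s+\frac{1}{2}}$, and the equation then gives $\partial_t h \in L^\infty_T H^{s-1}$. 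Existence is obtained by a standard approximation: either by a parabolic regularization $\partial_t h_\varepsilon + G(h_\varepsilon)h_\varepsilon - \varepsilon \Delta h_\varepsilon = 0$, or by a Galerkin / Friedrichs mollifier on the initial data, producing a family of smooth solutions whose bounds are uniform in $\varepsilon$; one passes to the limit by weak-$*$ compactness and the Aubin-Lions lemma, which provides the strong compactness in $H^{s-\delta}$ needed to take the limit in the nonlinear term $G(h_\varepsilon)h_\varepsilon$. Continuity in $H^s$ is recovered by a Bona-Smith argument.

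\textbf{Uniqueness.} For two solutions $h_1,h_2$ sharing the datum $h_0$, I would paralinearize the difference. Writing $\delta h = h_1 - h_2$, the identities of Proposition~\ref{Prop:eqBV} applied to the two solutions give an equation
\[
\partial_t \delta h + T_{\tilde\lambda} \delta h = \mathcal{R}(h_1,h_2)\delta h,
\]
with $\tilde\lambda$ of order $1$ and elliptic (depending on $h_1,h_2$), and with $\mathcal{R}$ of order $\leq 1/2$. An energy estimate in $L^2$ (or $H^{-1/2}$, to leave room for the remainder) closes by Gr\"onwall, giving $\delta h \equiv 0$.

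\textbf{Smoothing.} The $C^\infty$ regularization for $t>0$ follows from a bootstrap: the parabolic gain $L^2_{\text{loc}}((0,T];H^{s+\frac{1}{2}})$ and a time-cut-off argument allow one to differentiate the equation and iterate, producing $h \in L^2_{\text{loc}}((0,T];H^{s+k/2})$ for every $k\in\mathbb N$, hence $h \in C^\infty((0,T]\times \xT^n)$ by Sobolev embedding.

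\textbf{Main obstacle.} The technical heart of the argument is the paralinearization of $G(h)$ at the limited regularity $s>n/2+1$, for which $\nabla h$ is merely H\"older continuous. Controlling the remainder $R(h)$ and the symbol $\lambda$ (in particular the dependence on $h$ of the commutators) within this regularity is exactly the content of the paradifferential toolbox of \cite{AM,ABZ1,ABZ3}; once accepted as a black box, the remaining parabolic energy argument is mechanical.
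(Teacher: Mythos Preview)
Your overall strategy is the paper's strategy, but your execution blurs the one technical point that makes the argument close at the threshold regularity $s>n/2+1$. You write the schematic equation as $\partial_t u + T_\lambda u = f(h)$ with $u=\langle D_x\rangle^s h$ and then ``commute $\langle D_x\rangle^s$ through''. The paper does \emph{not} do this. Instead it uses Proposition~\ref{Prop:eqBV} literally: one paralinearizes the equations for $B$ and $V$ themselves, obtaining
\[
\partial_t B - T_V\cdot\nabla B + T_{a\lambda}B = F_1,\qquad
\partial_t V - T_V\cdot\nabla V + T_{a\lambda}V = F_2,
\]
with $F_1,F_2$ controlled in $H^{s-3/2+\varepsilon}$. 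One then applies an abstract parabolic estimate (Prop.~2.18 of \cite{ABZ3}) to get $(B,V)\in X^{s-1}(I)$, and recovers $h\in X^s(I)$ from $\nabla h = V/(1-B)$ and the $L^2$ energy identity. The reason this detour is essential is the restriction $\sigma\le s-\tfrac12$ in the paralinearization lemma (the paper's Proposition~\ref{coro:paraDN1}): $G(h)$ acts on $B,V\in H^{s-1}$, so $\sigma=s-1$ is admissible, whereas acting on $h\in H^s$ one is outside the range, and commuting $\langle D_x\rangle^s$ with $T_\lambda$ only gains $\varepsilon$ derivatives (since $\lambda\in\dot\Gamma^1_\varepsilon$), which is not enough to close your $H^s$ energy inequality without invoking the full good-unknown paralinearization. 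This is exactly the point the paper stresses: ``the trick is to work with the equations for $B$ and $V$ instead of using the equation for $h$''.

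Two smaller corrections. First, the principal symbol is $a\lambda$ with $a=1-B$, not $\lambda$; the Rayleigh--Taylor positivity $a>0$ (Proposition~\ref{Coro:Zaremba-Taylor}) is what gives the parabolic lower bound $\RE p\ge c|\xi|$. Second, your claim that $R(h)$ maps $H^\sigma$ to $H^{\sigma+1/2}$ is too strong: the cited result only gives a gain of $\varepsilon<s-1-n/2$ over the principal part, i.e.\ $R(h):H^\sigma\to H^{\sigma-1+\varepsilon}$, and the proof is arranged so that this weaker gain suffices (via a choice of $p\in[1,2)$ in the parabolic maximal estimate and H\"older in time, producing the factor $T^\theta$ that closes the bootstrap). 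Your uniqueness and smoothing sketches are fine and match the paper's remarks.
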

\begin{defi}
We say that $h$ is a regular solution to \e{Hele-Shaw100} defined on $[0,T]$ 
if $h$ satisfies the conclusions of the above result.
\end{defi}

\subsection{Maximum principles for the graph elevation}\label{S:maxintro}

The Hele-Shaw equation is a nonlinear parabolic equation, 
so a natural question is to find maximum principles. 
We begin by the simplest question which is to study maximum principle for $h$ itself. 
It is known that 
\be\label{n211}
\sup_{x\in \xT^n}\la h(t,x)\ra\le \sup_{x\in \xT^n}\la h(0,x)\ra.
\ee
On the other hand, by performing an elementary $L^2$-energy estimate, one gets
$$
\int_{\xT^n} h(t,x)^{2}\dx 
\le \int_{\xT^n} h(0,x)^{2}\dx.
$$
We will complement these two results in three directions. Firstly, 
by proving $L^p$ estimates which include the above energy estimate and allow to obtain 
the maximum principle when $p$ goes to $+\infty$.

\begin{prop}
Let $n\ge 1$ and consider an integer $p$ in $\{1\}\cup 2\xN$. 
Assume that $h$ is a regular solution to $\partial_th+G(h)h=0$ defined on $[0,T]$. 
Then, for all time $t$ in $[0,T]$, there holds
\be\label{n210}
\int_{\xT^n} h(t,x)^{2p}\dx
+2\int_0^t\int_{\xT^n}h^p G(h)(h^p)\dx\dt 
\le \int_{\xT^n} h(0,x)^{2p}\dx.
\ee
\end{prop}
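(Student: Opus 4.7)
The plan is to combine the classical $L^{2p}$ energy computation with a Dirichlet-principle convexity inequality for the Dirichlet-to-Neumann operator $G(h)$ -- presumably the one the authors announce in the abstract -- in order to turn the dissipation term $\int h^{2p-1} G(h) h \, \dx$ into a term of the form $\int h^p G(h) h^p \,\dx$.

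First I would multiply the equation $\partial_t h + G(h)h = 0$ by $h^{2p-1}$ and integrate on $\xT^n$ to get the exact identity
\begin{equation*}
\frac{1}{2p}\frac{d}{dt}\int_{\xT^n} h^{2p}\,\dx + \int_{\xT^n} h^{2p-1}\,G(h) h\,\dx = 0.
\end{equation*}
The key step is then to bound the dissipation from below by $\int h^p G(h) h^p\,\dx$. Let $\varphi$ denote the harmonic extension of $h$ into $\Omega(t)$. Applying Green's identity with the test function $\varphi^{2p-1}$ (whose trace on $\Sigma(t)$ is $h^{2p-1}$) and using $\Delta_{x,y}\varphi = 0$, I would obtain
\begin{equation*}
\int_{\xT^n} h^{2p-1}\,G(h)h\,\dx = (2p-1)\int_{\Omega(t)} \varphi^{2p-2}|\nabla_{x,y}\varphi|^2\,\dx\dy.
\end{equation*}
Denoting by $\tilde\varphi$ the harmonic extension of $h^p$ and using the standard variational representation,
\begin{equation*}
\int_{\xT^n} h^p G(h) h^p\,\dx = \int_{\Omega(t)} |\nabla_{x,y}\tilde\varphi|^2\,\dx\dy.
\end{equation*}
Since $\varphi^p$ is an admissible competitor (its trace is $h^p$), the Dirichlet principle, together with the chain rule $\nabla_{x,y}(\varphi^p) = p\varphi^{p-1}\nabla_{x,y}\varphi$, yields
\begin{equation*}
\int_{\xT^n} h^p G(h) h^p\,\dx \le p^2 \int_{\Omega(t)} \varphi^{2p-2}|\nabla_{x,y}\varphi|^2\,\dx\dy = \frac{p^2}{2p-1}\int_{\xT^n} h^{2p-1}G(h)h\,\dx.
\end{equation*}

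Combining these ingredients gives
\begin{equation*}
\frac{d}{dt}\int_{\xT^n} h^{2p}\,\dx + \frac{2(2p-1)}{p}\int_{\xT^n} h^p G(h) h^p\,\dx \le 0.
\end{equation*}
Since $2(2p-1)/p = 4 - 2/p \ge 2$ for every integer $p\ge 1$ and $\int h^p G(h) h^p\,\dx \ge 0$ by positivity of $G(h)$, the announced differential inequality follows, and integration in time from $0$ to $t$ gives \e{n210}. Note that for $p=1$ the computation is an identity, while for $p\ge 2$ a strict dissipative margin remains.

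The main obstacle is the convexity inequality in the middle step: it is where one actually uses the harmonic (and non-local) nature of $G(h)$. Justifying Green's identity and the minimization argument on the unbounded domain $\Omega(t)$ requires checking that $\varphi-\bar h$ and $\nabla_{x,y}\varphi$ decay sufficiently as $y\to -\infty$, which is standard for the harmonic extension of a regular enough trace; this is guaranteed by the regularity of $h$ granted by Theorem~\ref{T:Cauchy} (and by the fact that $\varphi^p$ has finite Dirichlet integral since $h^p\in H^{s}$ for the range of $s$ considered). The restriction on $p$ is used to ensure both that $h^{2p-1}$ is a smooth polynomial function of $h$ (so that the chain rule applied to $\varphi^{2p-1}$ is licit pointwise) and that $h^{2p}$ provides a norm-like quantity.
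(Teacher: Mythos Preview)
Your proof is correct and essentially parallel in spirit to the paper's, but the key inequality is obtained by a different mechanism. The paper starts from the same $L^{2p}$ energy identity and then invokes the \emph{pointwise} convexity inequality $\Phi'(f)G(h)f\ge G(h)\Phi(f)$ (Proposition~\ref{P:convexityintro}) with $\Phi(r)=r^p$, yielding $ph^{p-1}G(h)h\ge G(h)(h^p)$; multiplying by $h^p\ge 0$ and integrating gives $2p\int h^{2p-1}G(h)h\,\dx\ge 2\int h^pG(h)(h^p)\,\dx$. You instead go back to the harmonic extension and combine Green's identity (to rewrite $\int h^{2p-1}G(h)h\,\dx$ as a weighted Dirichlet integral) with the Dirichlet variational principle (using $\varphi^p$ as a competitor for the extension of $h^p$). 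Both routes rest on the convexity of $r\mapsto r^p$ and the structure of harmonic extensions; the paper's is more modular, since the pointwise inequality is isolated as a tool reused throughout the article, while yours is self-contained and in fact produces the sharper constant $\tfrac{2(2p-1)}{p}\ge 2$ in front of the dissipation. The one point that deserves care is the justification of Green's identity and of the Dirichlet principle on the unbounded domain $\Omega$: the competitor $\varphi^p$ and the harmonic extension $\tilde\varphi$ of $h^p$ differ by a constant at $y\to-\infty$, but the cross term $\int_\Omega\nabla\tilde\varphi\cdot\nabla(\varphi^p-\tilde\varphi)$ still vanishes because $\partial_y\tilde\varphi$ decays, so the minimization argument goes through.
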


Observe that $\int_{\xT^n} \psi G(h)\psi\dx\ge 0$ for any function $\psi$ (see~\e{n217}), so the previous result implies that the 
$L^{2p}$-norm decays. Then one may 
deduce \e{n211} from~\e{n210} 
by arguing that the $L^\infty$-norm of $h$ is the limit of its $L^{2p}$-norms when $p$ goes to $+\infty$. 

We shall improve the maximum principle~\e{n210} to a 
comparison principle.
\begin{prop}\label{prop:orderingintro}
Let $h_1,h_2$ be two regular solutions of the Hele-Shaw 
equation $\partial_t h + G(h)h=0$ defined on the same time interval 
$[0,T]$, such that, initially,
$$
h_1(0,\cdot) \leq h_2(0,\cdot).
$$ 
Then
$$
h_1(t,\cdot) \leq h_2(t,\cdot)
$$
for all $t \in [0,T]$.    
\end{prop}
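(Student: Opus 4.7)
The plan is to prove the comparison by a first-touching-time argument combined with Hopf's lemma. A key preliminary observation is the vertical-translation invariance of the Dirichlet-to-Neumann operator: $G(h+c)=G(h)$ for any constant $c$, since adding $c$ to $h$ only shifts the harmonic extension of the boundary datum vertically and leaves the expression defining $G$ unchanged. Hence, if $h_2$ is a regular solution of \e{Hele-Shaw100}, then so is $\tilde h_2 := h_2 + \delta$ for any $\delta > 0$. Assume, for contradiction, that $h_1(t_*, x_*) > h_2(t_*, x_*)$ for some $(t_*, x_*) \in (0,T]\times\xT^n$, and fix $\delta \in (0,\, h_1(t_*, x_*)-h_2(t_*, x_*))$, so that $h_1 > \tilde h_2$ somewhere on $[0,T]\times\xT^n$ while $h_1(0,\cdot) \le h_2(0,\cdot) < \tilde h_2(0,\cdot)$. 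By continuity and compactness of $\xT^n$, there exist a first time $t_0 \in (0, t_*]$ and a point $x_0 \in \xT^n$ such that $h_1(t_0, x_0) = \tilde h_2(t_0, x_0)$ and $h_1 \le \tilde h_2$ throughout $[0, t_0]\times\xT^n$.

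At the touching point $P_0 := (x_0, h_1(t_0, x_0))$, the function $x \mapsto (h_1-\tilde h_2)(t_0, x)$ attains its global maximum $0$ at $x_0$, so $\nabla h_1(t_0, x_0) = \nabla \tilde h_2(t_0, x_0)$ and the two free surfaces are tangent at $P_0$; in particular $\Omega_1(t_0) \subset \tilde\Omega_2(t_0)$. Let $\phi_1$ and $\tilde\phi_2$ denote the harmonic extensions of $gh_1$ and $g\tilde h_2$ in $\Omega_1(t_0)$ and $\tilde\Omega_2(t_0)$, respectively, normalized by $\nabla\phi_i \to 0$ as $y \to -\infty$. Since $\tilde\phi_2 - g y$ is harmonic in $\tilde\Omega_2$, vanishes on its top boundary, and tends to $+\infty$ as $y \to -\infty$, the maximum principle (applied on truncations $\{y > -R\}\cap\tilde\Omega_2$ and passed to the limit $R\to\infty$) gives $\tilde\phi_2(x,y) \ge gy$ in $\tilde\Omega_2$; evaluating at $(x, h_1(x)) \in \partial\Omega_1$ yields $\tilde\phi_2 \ge g h_1 = \phi_1$ on $\partial\Omega_1$. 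As $y \to -\infty$, a Fourier decomposition in the semi-infinite cylinder gives $\phi_i \to g\bar h_i$, and conservation of mean (which follows from $\int_{\xT^n} G(h)h\,\dx = 0$ by the divergence theorem) combined with the initial ordering $\bar h_1(0) \le \bar h_2(0)$ yields $(\tilde\phi_2 - \phi_1) \to g(\bar{\tilde h}_2 - \bar h_1) \ge g\delta > 0$. Therefore $\tilde\phi_2 - \phi_1$ is harmonic and nonnegative in $\Omega_1(t_0)$, and vanishes at $P_0$.

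Applying Hopf's lemma at the boundary minimum $P_0$ (which is legitimate because $h_1(t_0,\cdot)$ is smooth by Theorem~\ref{T:Cauchy}, so that $\Omega_1(t_0)$ satisfies the interior ball condition at $P_0$), we obtain $\partial_n(\tilde\phi_2 - \phi_1)(P_0) < 0$, where $n$ denotes the outward (upward) unit normal to $\Omega_1(t_0)$ at $P_0$. Combining this with Darcy's law $v_i = -\nabla_{x,y}\phi_i$, the kinematic identity $\partial_t h_i = \sqrt{1+|\nabla h_i|^2}\,v_i \cdot n_i$, and the equality of the outward normals at $x_0$ (since $\nabla h_1(t_0, x_0) = \nabla \tilde h_2(t_0, x_0)$), we conclude that $\partial_t h_1(t_0, x_0) < \partial_t\tilde h_2(t_0, x_0)$. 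On the other hand, the first-touching-time property forces $\partial_t(h_1 - \tilde h_2)(t_0, x_0) \ge 0$, a contradiction. Hence $h_1 \le h_2$ on $[0,T]\times\xT^n$.

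The principal technical difficulty is the rigorous justification of the maximum principle and Hopf lemma on the unbounded domain $\Omega_1(t_0)$; this rests on the clean behavior of harmonic extensions at $y = -\infty$, which is secured by Fourier analysis in the semi-infinite cylinder together with conservation of mass. Once this is in place, the argument is essentially a perturbative sliding combined with Hopf at the first contact.
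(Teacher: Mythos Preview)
Your overall strategy --- first contact plus a Hopf/Zaremba argument on the difference of the potentials --- is exactly the mechanism in the paper's proof. The $\delta$-shift is a pleasant repackaging that replaces the paper's open--closed argument and its three cases by a single first-touching-time analysis. One case you still need to exclude explicitly is $h_1(t_0,\cdot)\equiv \tilde h_2(t_0,\cdot)$: then the two potentials coincide and Hopf says nothing; this is dispatched by forward uniqueness (Theorem~\ref{T:Cauchy}), which you invoke implicitly but should state.

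There is, however, a genuine error in your treatment of the behavior at $y\to-\infty$. The claim $\phi_i\to g\bar h_i$ is false. A short Pohozaev-type computation (apply the divergence theorem to $y\nabla\phi-\phi\,e_y$ on a truncated domain and let the depth go to $-\infty$) gives instead
\[
\lim_{y\to-\infty}\phi \;=\; g\bar h \;-\; \frac{1}{|\xT^n|}\int_{\xT^n} h\,G(h)h\,\dx,
\]
and since $\int h\,G(h)h>0$ whenever $h$ is not constant, the extra term does not cancel in the difference $\tilde\phi_2-\phi_1$. In particular you cannot conclude that the limit of $\tilde\phi_2-\phi_1$ is $\ge g\delta$, and your route to the maximum principle on the unbounded domain breaks down. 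The fix is easy and is what the paper actually uses: since $\nabla\tilde\phi_2,\nabla\phi_1\in L^2(\Omega_1(t_0))$, apply the maximum principle stated at the beginning of Section~\ref{S:max} (for harmonic functions with $L^2$ gradient) to $\tilde\phi_2-\phi_1$; this gives $\tilde\phi_2-\phi_1\ge 0$ in $\Omega_1(t_0)$ directly from the boundary inequality, with no need to identify the constant at $-\infty$. With that substitution, and the explicit handling of the coincidence case via uniqueness, your argument goes through.
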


Eventually, we will prove that the square of the $L^2$-norm decays in a convex manner. 
To do so, we prove the somewhat surprising result that $\int_{\xT^n} hG(h)h \dx$ is 
a Lyapunov function.

\begin{prop}\label{P:energyconvex}
Let $n\ge 1$. 
For any regular solution $h$ of the Hele-Shaw equation, there holds
\begin{align*}
&\mez\fract\int_{\xT^n} h^2\dx +\int_{\xT^n} h G(h)h \dx =0,\\[1ex]
&\fract\int_{\xT^n} h G(h)h \dx+\int_{\xT^n} a(h_t^2+|\nabla h|^2)\dx=0.
\end{align*}
where $a=a(t,x)$ is a positive function (the Rayleigh-Taylor coefficient defined in~\e{defi:RT}). 
Consequently, 
$$
\fract\int_{\xT^n} h(t,x)^2\dx\le 0\quad\text{and}\quad\fractt\int_{\xT^n} h(t,x)^2\dx\ge 0.
$$
\end{prop}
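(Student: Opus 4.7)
The first identity is immediate: testing $\partial_t h + G(h)h = 0$ against $h$, integrating over $\xT^n$, and using the self-adjointness of $G(h)$ gives
\[
\mez \fract\int_{\xT^n} h^2\,\dx + \int_{\xT^n} hG(h)h\,\dx = 0.
\]

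For the second identity, the plan is first to derive a quasilinear parabolic evolution for $h_t$. Differentiating $\partial_t h + G(h)h = 0$ in time and using the standard shape derivative formula
\[
(dG(h)\cdot \dot h)\psi = -G(h)(\dot h\, B_\psi) - \cn(\dot h\, V_\psi),
\]
where $B_\psi = (\partial_y\varphi)|_{y=h}$ and $V_\psi = (\nabla_x\varphi)|_{y=h}$ with $\varphi$ the harmonic extension of $\psi$, specialized to $\psi = h$ (so that $V := V_h = (1-B)\nabla h$), one obtains
\[
h_{tt} + G(h)(a\, h_t) = \cn(h_t V), \qquad a := 1 - B.
\]
I would test this equation against $h$, invoke the self-adjointness of $G(h)$ together with $G(h)h = -h_t$ and the pointwise identity $(1-B)|\nabla h|^2 = h_t + B$ (a rewriting of $h_t = -G(h)h$), and integrate by parts. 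After direct bookkeeping this produces
\[
\fract\int h\, G(h)h\,\dx = -\int a\, h_t^2\,\dx + \int B\, h_t\,\dx.
\]

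The main obstacle is the remaining hidden identity $\int B h_t\,\dx = -\int a|\nabla h|^2\,\dx$. The crucial observation is that $\partial_y\phi$ is harmonic in $\Omega$ with trace $B$ on $\Sigma$, so it agrees with the harmonic extension of $B$ up to an additive constant (irrelevant for the Dirichlet-to-Neumann map). A short computation using $\partial_y^2\phi = -\Delta_x\phi$ together with the chain-rule identity $\cn V = (\Delta_x\phi)|_{y=h} + (\partial_y\nabla_x\phi)|_{y=h}\cdot\nabla h$ then yields the cancellation $G(h)B = -\cn V$. Combined with the self-adjointness of $G(h)$, one integration by parts, and the relation $V=(1-B)\nabla h$:
\[
\int B h_t\,\dx = -\int h\, G(h)B\,\dx = \int h\,\cn V\,\dx = -\int V\cdot\nabla h\,\dx = -\int a|\nabla h|^2\,\dx,
\]
which combined with the preceding display completes the Lyapunov identity.

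Positivity of $a$ follows from the maximum principle applied to the pressure $P = \phi - g y$: this function is harmonic in $\Omega$, vanishes on $\Sigma$, and tends to $+\infty$ as $y\to-\infty$, so $P \geq 0$ throughout $\Omega$; the direct computation $\partial_n P|_\Sigma = -a\sqrt{1+|\nabla h|^2}$ combined with Hopf's lemma then yields $a > 0$. The stated monotonicity $\fract\int h^2\,\dx \leq 0$ and convexity $\fractt\int h^2\,\dx \geq 0$ follow immediately by combining the two identities with the nonnegativity of $\int hG(h)h\,\dx$ (from the self-adjointness and positive semi-definiteness of $G(h)$) and of $\int a(h_t^2+|\nabla h|^2)\,\dx$.
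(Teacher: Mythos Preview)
Your argument is correct. Both you and the paper arrive at the same intermediate step
\[
\fract\int h\,G(h)h\,\dx = -\int a\,h_t^2\,\dx + \int B\,h_t\,\dx,
\]
via the shape-derivative formula (you phrase it as testing the equation for $h_{tt}$ against $h$; the paper differentiates $\int hG(h)h\,\dx$ directly, but the bookkeeping is identical). The genuine difference is in how the remaining term $\int B h_t\,\dx$ is evaluated. The paper expands $h_t=-B+V\cdot\nabla h$ and then invokes a Rellich-type identity,
\[
\int\bigl(|V|^2-B^2+2BV\cdot\nabla h\bigr)\,\dx=0,
\]
obtained by applying the divergence theorem in $\Omega$ to the vector field $X=(-2\phi_y\nabla_x\phi,\,|\nabla_x\phi|^2-\phi_y^2)$. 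You instead write $h_t=-G(h)h$, use self-adjointness and the identity $G(h)B=-\cnx V$ (which you correctly rederive), and finish with one integration by parts. Your route is arguably more economical: it avoids introducing the Rellich identity and relies only on $G(h)B=-\cnx V$, which is already a central tool elsewhere in the paper. Conversely, the paper's Rellich identity is a self-contained boundary computation that does not require knowing that $\partial_y\phi$ coincides with the variational harmonic extension of $B$. One small remark: your parenthetical ``up to an additive constant'' is unnecessary---under the variational normalization $\nabla_{x,y}\phi\in L^2(\Omega)$, the function $\partial_y\phi$ \emph{is} the harmonic extension of $B$, with no ambiguity.
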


\subsection{Maximum principle for modulus of continuity}

We are interested in giving maximum principles 
for the derivatives of $h$. These bounds are interesting 
since they involve quantities which are scaling invariant. 
In this direction, 
we begin by recalling the following result.

\begin{prop}[from~\cite{Kim,ChangLaraGuillenSchwab}]\label{prop:bornederiv}
Let $n\ge 1$ and assume that $h$ is a regular 
solution to $\partial_th+G(h)h=0$ defined on $[0,T]$. 
Then, for all time $t$ in $[0,T]$,
\be\label{n501}
\sup_{x\in \xT^n}\la \nabla h(t,x)\ra\le \sup_{x\in \xT^n}\la \nabla h(0,x)\ra.
\ee
\end{prop}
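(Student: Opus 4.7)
The plan is to deduce Proposition~\ref{prop:bornederiv} directly from the comparison principle in Proposition~\ref{prop:orderingintro}, combined with the invariances of the Hele-Shaw equation under horizontal translations and under vertical shifts by a constant. In other words, I would view the slope bound~\e{n501} as the infinitesimal shadow of a spatial comparison principle.

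The first step is to verify that, for any $\tau\in\xR^n$ and any $M\ge 0$, the function $h_\tau(t,x):=h(t,x-\tau)+M|\tau|$ is again a regular solution of~\e{n10} on $[0,T]$. Horizontal invariance follows from the $x$-homogeneity of~\e{Darcy}. For the vertical invariance, the change of variable $y'=y-c$ in the harmonic extension shows that $G(h+c)=G(h)$ as operators; since $G(h)$ annihilates constants, one gets $G(h+c)(h+c)=G(h)h$, and hence $h+c$ satisfies~\e{n10} whenever $h$ does. Since both operations preserve the regularity class $H^s(\xT^n)$, $h_\tau$ qualifies as a regular solution in the sense of the paper.

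Next, set $M:=\sup_{x\in\xT^n}|\nabla h_0(x)|$, which is finite by the Sobolev embedding $H^s\hookrightarrow C^1$ valid for $s>n/2+1$. Lifting $h_0$ to a periodic function on $\xR^n$ and integrating $\nabla h_0$ along the straight segment from $x-\tau$ to $x$ gives $h_0(x)-h_0(x-\tau)\le M|\tau|$ for all $x\in\xT^n$ and $\tau\in\xR^n$, which is exactly the initial ordering $h(0,\cdot)\le h_\tau(0,\cdot)$. Proposition~\ref{prop:orderingintro} then propagates it in time, yielding $h(t,x)\le h(t,x-\tau)+M|\tau|$ for every $(t,x,\tau)\in[0,T]\times\xT^n\times\xR^n$. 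Choosing $\tau=se$ with $|e|=1$, dividing by $s>0$ and letting $s\to 0^+$ gives $\partial_e h(t,x)\le M$ for every unit vector $e$, which is~\e{n501}. Once the comparison principle is granted, no step here is technically subtle; the main content is the structural observation that bounds on $\nabla h$ can be converted into (and recovered from) comparisons between $h$ and its horizontal translates shifted vertically by a constant matching the slope.
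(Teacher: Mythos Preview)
Your proof is correct and follows essentially the same route as the paper: both derive the slope bound from the comparison principle (Proposition~\ref{prop:orderingintro}) together with the invariance of the Hele-Shaw flow under horizontal translations and vertical constant shifts. The only difference is presentational---the paper packages these three ingredients into an abstract equivariance lemma (Lemma~\ref{lem:modulus}) and thereby obtains the slightly more general statement that \emph{any} modulus of continuity is preserved (Proposition~\ref{prop:modcont}), of which your Lipschitz argument is the special case $\omega(r)=Mr$.
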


We shall provide later a generalization of this result (see Theorem~\ref{NeatMax}). 
In this paragraph we give in details an alternative proof and also a slight generalization which we believe is of independent interest, since it relies on a general principle which could be used in a broader context. Indeed, this proof 
relies only on a comparison principle at the level of functions, as given by Proposition \ref{prop:orderingintro}, with an abstract result pertaining to classes of monotone mappings which are equivariant under suitable group actions. We first explain the latter in its broader framework in order to better highlight the properties at play.   

Let $(X,d)$ be a metric space.  

\begin{defi} A non decreasing function $\omega:\ (0,+\infty) \to (0,+\infty)$ is a modulus of continuity for a function $f: \ X \to \xR$ if and only if 
\[
  |f(x_1)-f(x_2)| \leq \omega(d(x_1,x_2)),\qquad\forall\ x_1,x_2 \in X.
\]
\end{defi}

\noindent
In the sequel we assume that $\cG$ is a group acting on $X$ and which satisfies the following property 
\[
  \forall x_1,x_2 \in X,\ \exists G\in \cG \text{ such that } \left\{ 
  \begin{array}{l}
    G(x_1) = x_2,\\[2pt]
    d(x,G(x)) \leq d(x_1,x_2), \ \forall x \in X.
  \end{array}
  \right.\leqno{(H_0)}
\]
The action of $\cG$ on $X$ induces an action of $\cG$ on $\xR^X$ classically defined by
\[
  G(f)(x) := f(G^{-1}(x)),\qquad\forall\: f\in \xR^X,\: \forall x \in X,
\]
where $G^{-1}$ denotes the inverse of $G$ in $\cG.$ 

\begin{lemm}\label{lem:modulus} Let $F \subseteq \xR^X$ be a $\cG$-invariant vector space which contains the constants,
  and suppose that $\Phi:\: F \to F$ is a mapping which satisfies:
  \begin{align*}
    1)\ & \Phi(f_1) \leq \Phi(f_2), &&  \forall f_1 \leq f_2 \in F         && \text{(monotonicity)},\\
    2)\ & \Phi(G(f)) = G(\Phi(f)),  &&  \forall f \in F,\: \forall G \in \cG&& \text{($\cG$-equivariance)},\\
    3)\ & \Phi(f+c) = \Phi(f)+c,    && \forall f \in F,\: \forall c \in \xR && \text{(equivariance through constants).}
  \end{align*}
  Then, whenever $f\in F$ and $\omega$ is a modulus of continuity for $f$, $\omega$ is also a modulus of
  continuity for $\Phi(f).$ 
\end{lemm}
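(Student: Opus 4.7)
The plan is to reduce the claim to two-sided inequalities of functions in $F$ that $\Phi$ preserves by combining the three axioms.

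First, I would fix $x_1,x_2 \in X$, set $c := \omega(d(x_1,x_2))$, and invoke hypothesis $(H_0)$ to pick $G \in \cG$ such that $G(x_1)=x_2$ and $d(x,G(x)) \le d(x_1,x_2)$ for every $x \in X$. The key bookkeeping observation is that, with the stated convention for the induced action on $\xR^X$, the composition $f \circ G$ coincides with $G^{-1}(f)$; indeed,
$$
G^{-1}(f)(x) = f\bigl((G^{-1})^{-1}(x)\bigr) = f(G(x)).
$$

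Next, because $\omega$ is non-decreasing and is a modulus of continuity for $f$, the property $d(x,G(x)) \le d(x_1,x_2)$ from $(H_0)$ gives, for every $x \in X$,
$$
|f(G(x)) - f(x)| \le \omega(d(G(x),x)) \le \omega(d(x_1,x_2)) = c.
$$
I would rewrite this as the pointwise inequality in the $\cG$-invariant space $F$ (which contains constants):
$$
f - c \le G^{-1}(f) \le f + c.
$$
Applying monotonicity (1) and then constant-equivariance (3) to collapse the outer terms, I get
$$
\Phi(f) - c \le \Phi(G^{-1}(f)) \le \Phi(f) + c.
$$
Now $\cG$-equivariance (2) identifies $\Phi(G^{-1}(f)) = G^{-1}(\Phi(f))$, and evaluating at the particular point $x_1$ gives
$$
G^{-1}(\Phi(f))(x_1) = \Phi(f)(G(x_1)) = \Phi(f)(x_2),
$$
so that $|\Phi(f)(x_2) - \Phi(f)(x_1)| \le c = \omega(d(x_1,x_2))$, which is exactly the claim.

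The only real subtlety I expect is the convention-matching step: checking that $(H_0)$, which is a statement about how much $G$ can move points of $X$, translates via the induced action into a uniform control of $G^{-1}(f) - f$ on $X$ by $\omega(d(x_1,x_2))$. Once this identification is made, monotonicity, $\cG$-equivariance, and equivariance through constants chain together with no further work, and the arbitrariness of the pair $(x_1,x_2)$ concludes the proof.
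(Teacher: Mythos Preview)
Your proof is correct and follows essentially the same approach as the paper: pick $G$ from $(H_0)$, use the modulus of continuity together with $d(x,G(x))\le d(x_1,x_2)$ to sandwich a $\cG$-translate of $f$ between $f-c$ and $f+c$, then push this through $\Phi$ using the three axioms and evaluate at the appropriate point. The only cosmetic difference is that the paper works with $G(f)$ and a one-sided inequality (obtaining the reverse bound from the arbitrariness of $(x_1,x_2)$), whereas you work with $G^{-1}(f)$ and carry the two-sided sandwich through to get $|\Phi(f)(x_2)-\Phi(f)(x_1)|\le c$ directly.
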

\begin{proof}
  Let $x_1, x_2$ be arbitrary points in $X$, and let $G$ be given by assumption $(H_0)$ for that
  specific choice of $x_1,x_2.$ The function $\bar{f} := G(f) + \omega(d(x_1,x_2))$ belongs to $F$
  (by assumption on the latter) 
  and satisfies $f\leq \bar{f}.$ Indeed, since $\omega$ is a modulus of continuity for $f$, for
  an arbitrary $x \in X$  we have
  \[
    f(G(x)) \geq f(x) - \omega(d(x,G(x)) \geq f(G(x)) - \omega(d(x_1,x_2)), 
  \]
  where for the last inequality we have used the monotonicity of $\omega$ combined with assumption $(H_0).$ 
  From the monotonicity of $\Phi,$ it follows that $\Phi(f)\leq \Phi(\bar{f}).$ On the other hand,
  from both equivariances of $\Phi$ we obtain
  \[
    \Phi(\bar{f})(x) = \Phi(f)(G^{-1}(x)) + \omega(d(x_1,x_2)),\qquad \forall x \in X.  
  \]
  Specified at the point $x=x_2$, the previous identity together with the inequality $\Phi(f)\leq \Phi(\bar{f})$ yield 
  \[
    \Phi(f)(x_2) \leq \Phi(f) (x_1) + \omega(d(x_1,x_2)), 
  \]
  from which the conclusion follows by arbitrariness of $x_1$ and $x_2.$
\end{proof}

Proposition \ref{prop:bornederiv} is an immediate consequence of the following.

\begin{prop}\label{prop:modcont}
Let $n\ge 1$ and consider a regular 
solution $h$ to $\partial_th+G(h)h=0$ defined on $[0,T]$. 
Then, whenever $\omega$ is a modulus of continuity for $h(0,\cdot)$, $\omega$ is also a modulus of
continuity for $h(t,\cdot)$, for any $t \in [0,T]$.
\end{prop}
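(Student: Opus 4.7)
The plan is to apply Lemma~\ref{lem:modulus} with $X = \xT^n$ equipped with its flat metric $d$, with $\cG$ the group of translations $\tau_a : x \mapsto x+a$ acting on $\xT^n$, and with $\Phi$ the flow map at fixed time $t \in [0,T]$ that sends an initial datum $f$ to the value $h_f(t,\cdot)$ of the corresponding regular solution furnished by Theorem~\ref{T:Cauchy}. For the ambient $\cG$-invariant vector space $F$, it is enough to take a space of sufficiently regular functions containing the constants and containing $h(0,\cdot)$ together with its perturbations of the form $\tau_a h(0,\cdot) + c$ that appear in the proof of the lemma; such perturbations remain in $H^s(\xT^n)$ with essentially the same norm, hence admit regular solutions at least up to time $T$ by Theorem~\ref{T:Cauchy}.

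First I would check hypothesis $(H_0)$: given $x_1, x_2 \in \xT^n$, lift $x_2 - x_1$ to a representative $a \in \xR^n$ with $|a| = d(x_1, x_2)$; then $\tau_a(x_1) = x_2$ and $d(x, \tau_a(x)) \le |a| = d(x_1, x_2)$ for every $x \in \xT^n$. Next, the monotonicity axiom of the lemma is exactly the comparison principle stated in Proposition~\ref{prop:orderingintro}, while the $\cG$-equivariance axiom follows from the translation invariance of the equation~\e{n10}: the harmonic extension problem~\e{defi:varphipsi} commutes with $x$-translations, so $G(h \circ \tau_a^{-1}) = G(h) \circ \tau_a^{-1}$ as operators acting on traces, and consequently $(t,x) \mapsto h(t, \tau_a^{-1}(x))$ is a regular solution whenever $h$ is; uniqueness in Theorem~\ref{T:Cauchy} then yields $\Phi(\tau_a f) = \tau_a \Phi(f)$.

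The substantive point I expect to be the main obstacle is the third axiom, equivariance under the addition of a constant. It rests on two identities for the Dirichlet-to-Neumann operator: for every $c \in \xR$ and every trace $\psi$,
\[
G(h+c)\psi = G(h)\psi \qquad \text{and} \qquad G(h)c = 0.
\]
The first would come from the vertical rigid motion $y \mapsto y+c$, which sends the fluid domain $\{y < h(x)+c\}$ onto $\{y < h(x)\}$ while preserving the Laplacian as well as the outward unit normal to the free surface (which depends only on $\nabla h$); the harmonic extension problem~\e{defi:varphipsi} is therefore pulled back unchanged. The second identity holds because a constant is harmonic and has vanishing normal derivative. Combining them gives
\[
\partial_t(h+c) + G(h+c)(h+c) = \partial_t h + G(h)h + G(h)c = 0,
\]
so $h+c$ is the regular solution with initial datum $f+c$ and, by uniqueness, $\Phi(f+c) = \Phi(f) + c$.

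Once $(H_0)$ and the three axioms are established, Lemma~\ref{lem:modulus} applies directly to $f = h(0,\cdot)$ and $\Phi(f) = h(t,\cdot)$, which is exactly the claim. Aside from the change-of-variables argument giving $G(h+c)\psi = G(h)\psi$, every ingredient is already in the paper, so no serious additional work is anticipated.
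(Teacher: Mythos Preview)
Your proposal is correct and follows essentially the same approach as the paper: both apply Lemma~\ref{lem:modulus} with $X=\xT^n$, the group of translations, and $\Phi$ the solution map, invoking Proposition~\ref{prop:orderingintro} for monotonicity, translation invariance for $\cG$-equivariance, and the infinite-depth setting for equivariance under constants. Your write-up supplies more detail than the paper's proof (an explicit check of $(H_0)$ and the identities $G(h+c)\psi=G(h)\psi$, $G(h)c=0$), but the strategy is identical.
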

\begin{proof}
We apply Lemma \ref{lem:modulus} with $X = \xT^n$, $F = C^\infty(\xT^n)$ and $\Phi$ being the solution map for the Hele-Shaw equation from time $0$ to some fixed arbitrary time $t\geq 0.$ The group $\cG$ acting on $X$ is simply $\xR^n$ and the action is by translation. The fact that Assumption 1) in Lemma \ref{lem:modulus} is satisfied is precisely the statement of Proposition \ref{prop:orderingintro}. Assumption 2) follows from the invariance of the Hele-Shaw equation under translation in the space variables, and assumption 3) is an easy consequence of our setting with an infinite depth.
\end{proof}

\subsection{Identities and Lyapunov functionals}

Proposition~\ref{prop:bornederiv} gives a maximum principle for the 
$L^\infty$-norm of the spatial derivatives. Such results are quite classical for parabolic equations. 
We shall see in this section that there are other hidden Lyapunov functions which, to the authors knowledge, cannot 
be derived from general principles for parabolic equations. These Lyapunov functions will allow us to control other derivatives.

The main difficulty is to find good derivatives, for which one can form simple evolution equations. Guided by the analysis in Alazard-Burq-Zuily~\cite{ABZ1,ABZ3}, we work with the horizontal and vertical traces 
of the velocity at the free surface:
\be\label{defi:BVphii}
B= (\partial_y \phi)\arrowvert_{y=h},\quad 
V = (\nabla_x \phi)\arrowvert_{y=h}.
\ee
They are given in terms of $h$ by the following formulas (see Proposition~\ref{cancellation}),
\begin{equation}\label{n1201}
B= \frac{G(h)h+\la \partialx h\ra^2}{1+|\partialx  h|^2},
\qquad
V=(1-B)\partialx h.
\end{equation}

\begin{prop}\label{Prop:eqBV}
For regular solutions, the derivatives~$B$ and~$V$ satisfy
\be\label{eqBV}
\left\{
\begin{aligned}
&\partial_t B-V\cdot \nabla B +(1-B)G(h)B=\gamma,\\
&\partial_t V-V\cdot \nabla V +(1-B)G(h)V+\frac{\gamma}{1-B} V=0,
\end{aligned}
\right.
\ee
where $\gamma$ is given explicitly by
\begin{equation}\label{eq:defgamma}
\gamma=\frac{1}{1+|\nabla h|^2}\Big(G(h)\big(B^2+|V|^2\big)-2BG(h)B-2V\cdot G(h)V\Big).
\end{equation}
\end{prop}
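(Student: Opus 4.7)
The plan is to derive both equations by combining the harmonicity of $\phi$ and of its first-order partial derivatives with the chain rule on the free surface. The elementary tool is the following pair of trace identities: for any function $u$ harmonic in $\Omega(t)$ with Dirichlet trace $f = u|_{y=h}$, the definition of $G(h)$ together with the chain rule applied to $f(x)=u(x,h(x))$ yield
\[
(\partial_y u)|_{y=h} = \frac{G(h)f + \nabla h\cdot\nabla f}{1+|\nabla h|^2}, \qquad (\nabla_x u)|_{y=h} = \nabla f - (\partial_y u)|_{y=h}\,\nabla h.
\]
Applied to $u=\phi$ (trace $h$), these recover \e{n1201}. I will apply them to the harmonic functions $\partial_{x_j}\phi$, $\partial_y\phi$ and $\partial_t\phi$, whose Dirichlet traces are $V_j$, $B$ and $-(1-B)G(h)h$ respectively; the last identity comes from $t$-differentiating $\phi(t,x,h(t,x))=h(t,x)$ and using Hele-Shaw.

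To obtain the $B$-equation, I would differentiate $B(t,x) = (\partial_y\phi)(t,x,h(t,x))$ in~$t$; using the harmonicity relation $\partial_y^2\phi = -\Delta_x\phi$ gives
\[
\partial_t B = (\partial_t\partial_y\phi)|_{y=h} + (\Delta_x\phi)|_{y=h}\,G(h)h.
\]
The trace $(\partial_t\partial_y\phi)|_{y=h}$ is then obtained by applying the basic identity above to the harmonic function $\partial_t\phi$ (whose trace is known), while $(\Delta_x\phi)|_{y=h}$ is obtained by spatially differentiating the identity $V_j = (\partial_{x_j}\phi)|_{y=h}$ and again applying the basic identity to each $\partial_{x_j}\phi$ (of trace $V_j$). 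An entirely analogous calculation, starting from $V_j=(\partial_{x_j}\phi)|_{y=h}$, yields $\partial_t V$. After substituting, systematic use of the algebraic identity $V=(1-B)\nabla h$ and the rearrangement $(1+|\nabla h|^2)B = G(h)h + |\nabla h|^2$ should cause the $(1-B)$ factors and gradient terms to assemble into exactly the transport contributions $-V\cdot\nabla B$, $-V\cdot\nabla V$ and the parabolic contributions $(1-B)G(h)B$, $(1-B)G(h)V$ appearing in~\e{eqBV}.

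The residual terms must then match $\gamma$ and $\tfrac{\gamma}{1-B}V$ respectively. The structural reason for the particular form~\e{eq:defgamma} is that $B$ and the $V_j$'s are traces of the harmonic functions $\partial_y\phi$, $\partial_{x_j}\phi$, so by Leibniz
\[
2B\,G(h)B + 2V\cdot G(h)V = \bigl((\partial_y - \nabla h\cdot\nabla_x)|\nabla_{x,y}\phi|^2\bigr)\big|_{y=h};
\]
the function $|\nabla_{x,y}\phi|^2$ coincides with $B^2+|V|^2$ on the free surface but is sub-harmonic rather than harmonic, and the quantity $(1+|\nabla h|^2)\gamma = G(h)(B^2+|V|^2) - 2BG(h)B - 2V\cdot G(h)V$ is precisely the boundary correction measuring this failure of harmonicity. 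The main obstacle I anticipate is the bookkeeping: each ingredient is elementary, but the intermediate expressions carry many crossed terms in $\nabla h$, $G(h)h$, $G(h)B$ and $G(h)V$, and the clean structure of~\e{eqBV} emerges only after careful and repeated use of~\e{n1201}. A useful shortcut for the $V$-equation is to differentiate the algebraic identity $V=(1-B)\nabla h$ in~$t$ and invoke the $B$-equation, thereby reducing the algebra to verifying one Leibniz-type identity for $G(h)$ on the product $(1-B)\nabla h$.
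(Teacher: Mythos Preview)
Your plan is sound and would produce a correct proof. The approach is closely related to the paper's but is organized differently. The paper does not differentiate $B=(\partial_y\phi)|_{y=h}$ directly; instead it starts from the algebraic expression $B=\frac{G(h)h+|\nabla h|^2}{1+|\nabla h|^2}$, computes $\partial_t B - V\cdot\nabla B$, and handles the resulting terms $(\partial_t - V\cdot\nabla)G(h)h$ via the \emph{shape derivative formula}
\[
\partial\bigl(G(h)h\bigr)=G(h)\bigl((1-B)\,\partial h\bigr)-\cnx(V\,\partial h),\qquad \partial\in\{\partial_t,\partial_{x_1},\dots,\partial_{x_n}\},
\]
together with the pre-established identities $G(h)B=-\cnx V$ and $\partial_i B - G(h)V_i=(\partial_i h)G(h)B+\sum_j(\partial_j h)(\partial_i V_j)$. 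For $V$, the paper first writes down the equation for $h_j=\partial_j h$ (again via the shape derivative), then multiplies by $(1-B)$ and invokes the $B$-equation---essentially the shortcut you propose. Your route, applying the trace formula directly to the harmonic functions $\partial_t\phi$, $\partial_{x_j}\phi$, $\partial_y\phi$, is more self-contained (it does not presuppose the Lannes shape derivative as a black box) but, as you anticipate, forces you to carry and cancel more cross terms by hand; the paper's use of the shape derivative packages this bookkeeping once and for all, at the cost of an extra cited ingredient.
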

The above proposition lies at the heart of our analysis. Indeed, we shall use it to study the Cauchy problem for the Hele-Shaw equation. 
To explain this, we need to introduce another important physical quantity: the Rayleigh--Taylor coefficient
\be\label{defi:RT}
a=-(\partial_y P)\ah=1-B.
\ee
The sign of $a$ dictates the stability of the Cauchy problem. 
In our setting, the well-posedness of the Cauchy problem follows from the fact that 
$a=1-B$ is always positive, so that $aG(h)$ is a positive elliptic operator of order one. 
The latter claim will be made precise in Section~\ref{S:paraDN}. This implies that 
the equations for $B,V$ are parabolic and 
the well-posedness follows. 
Recall that the positivity of $a$ is a well-known property which can be deduced from 
Zaremba's principle (see~\S\ref{S:max}).

We shall also use the equations for $B$ and $V$ to obtain a sharp maximum principle, including the time derivative.

\begin{theo}\label{NeatMax}
Let $n\ge 1$. Consider a positive number $M>0$ and 
a regular solution $h$ of $\partial_th+G(h)h=0$ defined on $[0,T]$. 
Then for any derivative
$$
D\in \{\partial_t,\partial_{x_1},\ldots,\partial_{x_n}\},
$$
if, initially, $\sup_{x\in \xT^n} D h(0,x)\le M$, then, for all time $t$ in $[0,T]$,
$$
\sup_{x\in \xT^n} Dh(t,x)\le M.
$$
\end{theo}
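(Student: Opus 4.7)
The plan is to derive both cases of the theorem from the comparison principle (Proposition~\ref{prop:orderingintro}) combined with three elementary symmetries of the Hele-Shaw equation: invariance under spatial translations $x \mapsto x + \delta e_i$, invariance under time translations $t \mapsto t + \delta$, and invariance under the addition of a real constant to $h$ (the last of these being a consequence of the infinite-depth framework, as already used in the proof of Proposition~\ref{prop:modcont}). For each of the derivatives $D$ listed, I would construct an auxiliary solution by composing these symmetries, verify the inequality at time zero, and then propagate it to positive times via Proposition~\ref{prop:orderingintro}.

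\textbf{Spatial derivatives.} For $D = \partial_{x_i}$, fix $\delta > 0$ and set $\tilde h(t,x) := h(t, x + \delta e_i) - M\delta$. By the three symmetries above, $\tilde h$ is again a regular solution on $[0,T]$. The assumption $\sup_x \partial_{x_i} h(0, x) \le M$ integrates to $h(0, x+\delta e_i) - h(0, x) \le M\delta$, so that $\tilde h(0, \cdot) \le h(0, \cdot)$. Proposition~\ref{prop:orderingintro} then propagates this to $\tilde h(t, \cdot) \le h(t, \cdot)$, i.e.\ $(h(t, x+\delta e_i) - h(t, x))/\delta \le M$; sending $\delta \to 0^+$ closes this case.

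\textbf{Time derivative.} For $D = \partial_t$, the initial bound does not directly supply a pointwise comparison between $h(0, \cdot)$ and a time-shifted trajectory, so the argument first upgrades Proposition~\ref{prop:orderingintro} into a one-sided $L^\infty$-contraction in $t$. Setting $c_\delta := \sup_x (h(\delta, x) - h(0, x))$, both $(t, x) \mapsto h(t+\delta, x)$ and $(t, x) \mapsto h(t, x) + c_\delta$ are regular solutions defined on $[0, T-\delta]$, and at $t = 0$ the former lies pointwise below the latter by construction. Proposition~\ref{prop:orderingintro} thus yields
\[
\sup_x\bigl(h(t+\delta, x) - h(t, x)\bigr) \le \sup_x\bigl(h(\delta, x) - h(0, x)\bigr), \qquad \forall\, t \in [0, T-\delta].
\]
Dividing by $\delta$ and passing to the limit $\delta \to 0^+$ then gives $\sup_x \partial_t h(t, \cdot) \le \sup_x \partial_t h(0, \cdot) \le M$; the convergence of the difference quotients uniformly in $x$ is supplied by the regularity $h \in C^1([0,T]; H^{s-1})$ combined with the Sobolev embedding $H^{s-1} \hookrightarrow C^0(\xT^n)$, which is available since $s > n/2 + 1$.

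The main subtlety to record cleanly is the legitimacy of the constant-shift invariance, which must be read off the infinite-depth geometry: translating $h$ by $c$ transports the domain $\{y < h\}$ onto $\{y < h + c\}$, and the change of variable $Y = y - c$ shows that $G(h+c)(h+c) = G(h)(h+c) = G(h)h$, the last equality using the fact that the harmonic extension of the constant $c$ has vanishing normal derivative, hence $G(h) c = 0$. Once this is in hand, the rest is a straightforward combination of the comparison principle, the fundamental theorem of calculus, and the embedding above.
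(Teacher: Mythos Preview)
Your proof is correct and takes a genuinely different route from the paper's. The paper derives explicit parabolic equations for the symmetrised unknowns $W=\sqrt{a}\,\nabla h$, $Z=\sqrt{a}\,\partial_t h$ and $\sqrt{a}$ (all three satisfy the \emph{same} linear equation $(\partial_t+L(h)+\gamma/(2a))u=0$, with $L(h)$ built from a skew-symmetric transport part and the self-adjoint part $\sqrt{a}\,G(h)(\sqrt{a}\,\cdot)$), and then runs the Stampacchia multiplier method: multiply by $(U-M\sqrt{a})_+$, use the convexity inequality~\eqref{convex} to show that the contribution of $L(h)$ has the right sign, and close by Gronwall. This requires Proposition~\ref{Prop:eqBV}, the identity $G(h)B=-\cnx V$, and Proposition~\ref{P:convexityintro} as inputs.

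Your argument, by contrast, uses nothing beyond the comparison principle (Proposition~\ref{prop:orderingintro}) and the three symmetries you list. For spatial derivatives this is essentially the mechanism behind Lemma~\ref{lem:modulus} and Proposition~\ref{prop:modcont}, specialised to a one-sided linear modulus in a single coordinate direction; for the time derivative, your one-sided $L^\infty$-contraction step (comparing the time-shifted solution with $h+c_\delta$) is an elegant extension of the same idea that the paper does \emph{not} exploit for $\partial_t$. Your route is shorter and more elementary, and avoids the structural identities entirely. The trade-off is that the paper's approach, while heavier, produces the evolution equations for $B$, $V$ and the symmetrised quantities as by-products; these are of independent interest and the equations for $B,V$ are reused in the study of the Cauchy problem and of the $L^p$-decay estimates.
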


We shall work out two other applications of the equations for $B$ and $V$. 
Namely, we shall prove decay estimates for the $L^p$-norms of the inverse of the 
Rayleigh--Taylor coefficient $a=1-B$ and for the horizontal velocity $V$ when $n=1$.

\begin{theo}\label{T:Bp}
Let $n\ge 1$ and consider a real number $p$ in $[1,+\infty)$. 
Assume that $h$ is a regular solution to $\partial_th+G(h)h=0$ defined on $[0,T]$. 
Then, for all time $t$ in $[0,T]$,
$$
\int_{\xT^n} \frac{\dx}{a(t,x)^p}\le 
\int_{\xT^n} \frac{\dx}{a(0,x)^p}.
$$
Consequently, for all time $t$ in $[0,T]$,
$$
\inf_{x\in\xT^n}a(t,x)\ge \inf_{x\in \xT^n}a(0,x).
$$
\end{theo}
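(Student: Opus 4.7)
The plan is to derive a transport--diffusion equation for $a=1-B$ from Proposition~\ref{Prop:eqBV} and run an $L^p$ computation on $1/a$ that exploits two remarkable cancellations. Since $G(h)$ annihilates constants, $G(h)B=-G(h)a$, and the equation for $B$ becomes
\be\label{pp:eqa}
\partial_t a - V\cdot \nabla a + a\, G(h)a = -\gamma.
\ee
The first cancellation is the pointwise identity $\cn V = G(h) a$. I would prove it by observing that $\partial_y\phi$ is harmonic in $\Omega$ with trace $B$ on $\{y=h\}$, hence is itself the harmonic extension of $B$. Therefore $G(h)B = (\partial_y^2\phi - \nabla h\cdot\nabla_x\partial_y\phi)|_{y=h}$, while a direct chain-rule computation on $V_j = (\partial_{x_j}\phi)(x,h(x))$ gives $\cn V = (\Delta_x\phi + \nabla h\cdot \nabla_x\partial_y\phi)|_{y=h}$. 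Invoking $\Delta\phi=0$ to cancel the second-order terms yields $G(h)B = -\cn V$, that is $\cn V = G(h)a$.

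The second cancellation is that $\gamma\le 0$ pointwise. Writing
\[
(1+|\nabla h|^2)\gamma = \big(G(h)(B^2) - 2B\, G(h)B\big) + \sum_j \big(G(h)(V_j^2) - 2V_j\, G(h)V_j\big),
\]
each bracket is non-positive by the pointwise C\'ordoba--C\'ordoba type inequality $G(h)(f^2) \le 2 f\, G(h) f$ for the Dirichlet-to-Neumann (the convexity inequality announced in the abstract), applied to $f=B$ and to each component $V_j$. Multiplying~\eqref{pp:eqa} by $-p\, a^{-p-1}$, integrating on $\xT^n$, and integrating by parts in the transport term via $\int V\cdot\nabla(a^{-p})\dx = -\int (\cn V) a^{-p}\dx = -\int a^{-p} G(h)a\dx$, I arrive at
\be\label{pp:key}
\fract \int_{\xT^n} a^{-p}\dx = (p-1) \int_{\xT^n} a^{-p} G(h) a\dx + p\int_{\xT^n} a^{-p-1} \gamma\dx.
\ee

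Both terms on the right of~\eqref{pp:key} are non-positive. The second is $\leq 0$ because $\gamma \leq 0$ and $a>0$. For the first, the integrated convexity inequality for $G(h)$ applied to any concave $F$ yields $\int F'(a)\, G(h) a\dx \le \int G(h) F(a)\dx = 0$; I would apply it with $F(a) = -a^{1-p}/(p-1)$ when $p>1$ and $F(a)=\log a$ when $p=1$, both satisfying $F'(a)=a^{-p}$ and $F''<0$. This proves $\|1/a(t,\cdot)\|_{L^p(\xT^n)}\le \|1/a(0,\cdot)\|_{L^p(\xT^n)}$; passing to the limit $p\to\infty$ then yields the pointwise lower bound $\inf a(t,\cdot)\ge \inf a(0,\cdot)$. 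The main obstacle is the identity $\cn V = G(h) a$: it is precisely what converts the divergence of the transport velocity into a diffusion term of the correct sign, produces the clean coefficient $(p-1)$, and makes the endpoint case $p=1$ work through an exact cancellation rather than by degeneracy.
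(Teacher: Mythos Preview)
Your proof is correct and follows essentially the same route as the paper: derive the equation for $a$, use the identity $\cn V=G(h)a$ (equivalently $G(h)B=-\cn V$) to merge the transport divergence with the diffusion term into the single expression $(p-1)\int a^{-p}G(h)a\,\dx$, and then apply the convexity inequality together with $\gamma\le 0$ to conclude. The only cosmetic differences are that the paper introduces $\zeta=1/a$ before multiplying, phrases the convexity step with a convex $\Phi(r)=r^{1-p}$ rather than your concave $F$, and notes that since $a$ ranges in a compact subset of $(0,\infty)$ one may replace $\Phi$ by a globally $C^2$ convex extension (a technicality you should mention); your $\log a$ choice at $p=1$ is unnecessary since the coefficient $p-1$ already kills that term.
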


We give two surprising applications of the previous inequality.

\begin{prop}
Let $n\ge 1$ and consider a regular solution $h$ to $\partial_th+G(h)h=0$ defined on $[0,T]$. 
Set
$$
a_0=\inf_{x\in\xT^n} a(0,x)>0.
$$
$i)$ Then, for any time $t\in [0,T]$,
\be\label{n2002}
\int_{\xT^n} \la  \nabla h(t,x)\ra^2\dx \le \frac{\la \xT^n\ra}{a_0}.
\ee
$ii)$ If in addition $G(h_0)h_0\ge -1$, then
\be\label{n2003}
\sup_{x\in \xT^n}\la \nabla h(t,x)\ra\le \min\Big\{\frac{1}{a_0},\sqrt{\frac{2}{a_0}}\Big\}.
\ee
\end{prop}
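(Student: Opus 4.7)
The plan is to deduce both parts from the algebraic identity
$$ a(1+|\nabla h|^2)=1-G(h)h, $$
which is immediate from $a=1-B$ together with the formula $B=(G(h)h+|\nabla h|^2)/(1+|\nabla h|^2)$ recalled in \eqref{n1201}. Coupled with the two preceding a priori bounds on $a$ and $\partial_t h$, this identity is the only nontrivial ingredient.

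For part $i)$, I would rewrite the identity as $|\nabla h|^2=(1-G(h)h)/a-1$ and integrate over $\xT^n$. Two ingredients feed into the estimate. First, Theorem~\ref{T:Bp} yields $a(t,x)\ge a_0$ everywhere, and the positivity of $a$ together with the identity shows that $1-G(h)h=a(1+|\nabla h|^2)\ge 0$ pointwise, so the bound $1/a\le 1/a_0$ may be multiplied against this nonnegative integrand. Second, the self-adjointness of $G(h)$ in $L^2(\xT^n)$ together with $G(h)\cdot 1=0$ (the harmonic extension of a constant is that constant) gives $\int_{\xT^n} G(h)h\,\dx=\int_{\xT^n} h\cdot G(h)1\,\dx=0$. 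Combining,
$$\int_{\xT^n}|\nabla h|^2\,\dx=\int_{\xT^n}\frac{1-G(h)h}{a}\,\dx-|\xT^n|\le \frac{1}{a_0}\int_{\xT^n}(1-G(h)h)\,\dx-|\xT^n|=\frac{|\xT^n|}{a_0}-|\xT^n|,$$
and \eqref{n2002} follows (in fact with one extra $|\xT^n|$ to spare).

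For part $ii)$, the hypothesis $G(h_0)h_0\ge -1$ is exactly the statement that $\partial_t h(0,\cdot)=-G(h_0)h_0\le 1$ initially. Theorem~\ref{NeatMax} applied to $D=\partial_t$ with $M=1$ then propagates this bound in time: $\partial_t h(t,x)\le 1$, equivalently $G(h)h\ge -1$, on $[0,T]\times\xT^n$. Inserting this into the identity and using $a\ge a_0$ from Theorem~\ref{T:Bp},
$$a_0(1+|\nabla h|^2)\le a(1+|\nabla h|^2)=1-G(h)h\le 2,$$
so $|\nabla h|^2\le 2/a_0-1$. The elementary inequality $(a_0-1)^2\ge 0$ rearranges to $2/a_0-1\le 1/a_0^2$, while $2/a_0-1\le 2/a_0$ is trivial, giving $|\nabla h|\le\min\{1/a_0,\sqrt{2/a_0}\}$, which is \eqref{n2003}.

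There is no serious obstacle: the statement is a corollary of Theorems~\ref{T:Bp} and~\ref{NeatMax} combined with the clean identity $a(1+|\nabla h|^2)=1-G(h)h$. The only mildly tricky point is recognizing that this identity is exactly the right algebraic relation to transfer bounds on $G(h)h$ (controlled on average by integration against $1$, and pointwise from above by the maximum principle on $\partial_t h=-G(h)h$) into bounds on $|\nabla h|$, with $a_0$ providing the denominator.
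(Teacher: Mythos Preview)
Your proof is correct. The route differs in organization from the paper's, though it rests on the same underlying relation.

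For part $i)$, the paper rewrites $\int G(h)h\,\dx=0$ via $G(h)h=B-V\cdot\nabla h$ and $V=a\nabla h$ to obtain $\int a|\nabla h|^2\,\dx=\int B\,\dx\le |\xT^n|$, then divides by $a_0$. You instead use the identity $a(1+|\nabla h|^2)=1-G(h)h$ directly (this is the same relation, repackaged), bounding $(1-G(h)h)/a\le (1-G(h)h)/a_0$ pointwise and integrating. Your version yields the sharper inequality $\int|\nabla h|^2\,\dx\le |\xT^n|/a_0-|\xT^n|$.

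For part $ii)$, the difference is more substantial. The paper first establishes the two-sided bound $|G(h)h|\le 1$, then invokes a second algebraic identity $B^2+|V|^2=\big((G(h)h)^2+|\nabla h|^2\big)/(1+|\nabla h|^2)$ to deduce $|V|^2\le 1$ and $|V|^2\le 2(1-B)$, and finally converts these via $V=(1-B)\nabla h$. You bypass this entirely: from $a_0(1+|\nabla h|^2)\le a(1+|\nabla h|^2)=1-G(h)h\le 2$ you get $|\nabla h|^2\le 2/a_0-1$ in one line, using only the one-sided bound $G(h)h\ge -1$. The elementary inequality $(a_0-1)^2\ge 0$ then recovers \eqref{n2003}, again with a strictly sharper intermediate bound. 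Your argument is shorter, uses fewer identities, and needs only the upper bound on $\partial_t h$ from Theorem~\ref{NeatMax}; the paper's approach, while longer, showcases the auxiliary identity for $B^2+|V|^2$ which may be of independent interest.
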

\begin{proof}
We give the proof here since it is elementary and allows to illustrate several results.

$i)$ The proof relies on the following trick: since
$$
\int_{\xT^n}G(h)h\dx=0, \quad G(h)h=B-V\cdot\nabla h,\quad V=(1-B)\nabla h=a\nabla h,
$$
we have
$$
\int_{\xT^n}B\dx =\int_{\xT^n}V\cdot \nabla h\dx=\int_{\xT^n}a\la \nabla h\ra^2\dx.
$$
Then, we use two ingredients. Firstly, 
the positivity of the Rayleigh-Taylor coefficient $a=1-B$ (see Proposition~\ref{Coro:Zaremba-Taylor}) 
to infer that $B\le 1$; and secondly we use the above theorem: 
$\inf_x a(t,x)\ge \inf_x a(0,x)=a_0$. This gives
$$
a_0\int_{\xT^n}\la\nabla h\ra^2\dx\le \int_{\xT^n}B\dx\le \int_{\xT^n}1\dx=\la \xT^n\ra,
$$
which implies~\e{n2002}.

$ii)$ Here we use two simple tricks. Firstly, it follows from~\e{n1201} that
$$
a=1-B=\frac{1-G(h)h}{1+\la \nabla h\ra^2}.
$$
The positivity of the Rayleigh-Taylor coefficient $a=1-B$ (see Proposition~\ref{Coro:Zaremba-Taylor}) 
implies $G(h)h\le 1$ (pointwise). 
Let us prove that $G(h)h\ge-1$. Since 
$G(h)h=-h_t$, this is equivalent to the property that $h_t\le 1$, which holds here thanks to the assumption 
$h_t\arrowvert_{t=0}=-G(h_0)h_0\le 1$ and the maximum principle for the time derivative (see 
Theorem~\ref{NeatMax}). This proves that $\la G(h)h\ra\le 1$. 

The second simple trick is the identity
$$
B^2+\la V\ra^2=\frac{(G(h)h)^2+\la \nabla h\ra^2}{1+\la \nabla h\ra^2},
$$
which can be verified from~\e{n1201} by an elementary calculation. 
Then, $\la G(h)h\ra\le 1$ implies that
$B^2+\la V\ra^2\le 1$. 
This obviously implies that 
$\la V\ra^2\le 1$. One also deduces 
that $\la V\ra^2\le (1-B^2)=(1-B)(1+B)\le 2(1-B)$ since $B\le 1$ (as $1-B>0$). 
Using $(1-B)\nabla h=V$, 
we deduce from the two previous bounds for $V$ that 
$(1-B)^2\la \nabla h\ra^2\le \min\{1,2(1-B)\}$. We then divide by 
$(1-B)^2$ and the wanted 
inequality is a consequence of the lower bound $1-B=a\ge a_0$ (as seen above).
\end{proof}

\begin{prop}\label{T:Vp}
Assume that $n=1$ and consider an integer $p$ in $\{1\}\cup 2(\xN\setminus\{0\})$. 
Let $h$ be a regular solution to $\partial_th+G(h)h=0$ such that
\be\label{assuslope1}
\sup_{(t,x)\in [0,T]\times\xT} \la \partial_xh(t,x)\ra\le \sqrt{\frac{p}{3p-2}}.
\ee
Then, for all time $t$ in $[0,T]$, there holds
\be\label{n301}
\int_{\xT} \frac{V(t,x)^{2p}}{a(t,x)}\dx \le 
\int_{\xT} \frac{V(0,x)^{2p}}{a(0,x)}\dx.
\ee
\end{prop}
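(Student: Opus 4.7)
The plan is to derive a scalar identity for $\fract\int_{\xT} V^{2p}/a\,dx$ starting from the coupled evolution equations of Proposition \ref{Prop:eqBV}. Since $G(h)$ annihilates constants, substituting $B=1-a$ in the first line of \eqref{eqBV} produces the companion equation
\be
\partial_t a - V\partial_x a + a G(h)a = -\gamma,
\ee
which I would pair with $\partial_t V = V\partial_x V - aG(h)V - (\gamma/a)V$. In one space dimension, \eqref{n1201} gives the crucial algebraic relation $V=a\,\partial_x h$. Computing $\partial_t(V^{2p}/a) = (2p/a) V^{2p-1}\partial_t V - (V^{2p}/a^2)\partial_t a$, integrating over $\xT$, and integrating by parts the pure transport contribution $2p\int V^{2p}V_x/a\,dx = \frac{2p}{2p+1}\int V^{2p+1} a_x/a^2\,dx$, I expect to reach the identity
\begin{align*}
\fract\int_{\xT}\frac{V^{2p}}{a}\,dx
&= -\frac{1}{2p+1}\int_{\xT}\frac{V^{2p+1} a_x}{a^2}\,dx
+\int_{\xT}\frac{V^{2p}}{a}\,G(h)a\,dx \\
&\quad - 2p\int_{\xT} V^{2p-1}G(h)V\,dx
- (2p-1)\int_{\xT}\frac{\gamma V^{2p}}{a^2}\,dx.
\end{align*}

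The third term on the right is the candidate dissipation: by the convexity inequality for the Dirichlet-to-Neumann operator announced in the abstract, applied to the convex function $\Phi(s)=s^{2p}$, one obtains the pointwise bound $2pV^{2p-1}G(h)V \ge G(h)(V^{2p})$, so that $-2p\int V^{2p-1}G(h)V\,dx$ is not just $\le 0$ but dominates a quantitative defect of the form $-\int a\,|(V^p)_x|^2\,dx$ up to lower-order pieces. The parity hypothesis $p\in\{1\}\cup 2(\xN\setminus\{0\})$ enters here and again at a later stage: it is what ensures that $s\mapsto s^{2p}$ is a smooth convex function on $\xR$ (as opposed to $|s|^{2p}$), and it is also what makes the auxiliary weight $V^{2p-4}$, appearing after differentiating the dissipation defect, a non-negative even power of $V$ which may be used freely as a pointwise multiplier.

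The remaining and main task is to absorb the three non-dissipative contributions into this defect. The plan is to substitute the explicit expression \eqref{eq:defgamma} for $\gamma$, to systematically use $V=ah_x$ and the identity $B^2+\la V\ra^2 = ((G(h)h)^2 + h_x^2)/(1+h_x^2)$ recalled in the preceding proof, and to integrate by parts once more in the transport remainder $\int V^{2p+1}a_x/a^2\,dx$ in order to convert every term into a common quadratic form in $G(h)^{1/2}V^p$ and $V^p$ with coefficients that are rational functions of $h_x^2$ only. The main obstacle will be the resulting algebraic bookkeeping: the constants generated by the integrations by parts (notably the factors $1/(2p+1)$ and the $2p-1$ in front of the $\gamma$-integral), together with the coefficient produced by the convexity defect, must balance in such a way that the global pointwise coefficient in front of $V^{2p}$ is a polynomial in $h_x^2$ which is $\le 0$ precisely on the set $\{(3p-2)h_x^2 \le p\}$. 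This is the stage at which the slope threshold \eqref{assuslope1} is used; the fact that this threshold equals $1$ for $p=1$ and tends to $1/\sqrt{3}$ as $p\to +\infty$ is consistent with the leading quadratic contribution being of the form $p - (3p-2)h_x^2$, and will be the signature of a correct computation.
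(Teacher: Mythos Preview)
Your conservation identity is correct but you have not yet recognized that the two ``extra'' terms cancel: since $G(h)a=-G(h)B=\cnx V=\partial_x V$ (see \eqref{I:G(h)B}), one has
\[
\int_{\xT}\frac{V^{2p}}{a}\,G(h)a\,\dx=\int_{\xT}\frac{V^{2p}}{a}\,\partial_x V\,\dx=\frac{1}{2p+1}\int_{\xT}\frac{V^{2p+1}a_x}{a^2}\,\dx,
\]
so your identity collapses exactly to the paper's clean three-term conservation law
\[
\fract\int_{\xT}\frac{V^{2p}}{a}\,\dx+2p\int_{\xT}V^{2p-1}G(h)V\,\dx+(2p-1)\int_{\xT}\frac{\gamma V^{2p}}{a^2}\,\dx=0.
\]
There is therefore no ``transport remainder'' left to integrate by parts.

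The genuine gap is in your treatment of the $\gamma$-integral. You propose to substitute the general expression \eqref{eq:defgamma} for $\gamma$ and to use the algebraic relation $B^2+V^2=((G(h)h)^2+h_x^2)/(1+h_x^2)$; but this still leaves the $B$-contributions $G(h)(B^2)-2BG(h)B$ inside $\gamma$, and these cannot be estimated by the $V$-dissipation $\int V^{2p-1}G(h)V\,\dx$ through bookkeeping alone. The paper's key step, specific to $n=1$ and not mentioned in your plan, is the identity
\[
G(h)(B^2)-2BG(h)B=G(h)(V^2)-2VG(h)V,
\]
proved by observing that $(\partial_x\phi)^2-(\partial_y\phi)^2$ is harmonic when $\phi$ is. This gives the purely-$V$ expression $\gamma=\frac{2}{1+h_x^2}\big(G(h)(V^2)-2VG(h)V\big)$. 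Combined with $V=ah_x$, the weight simplifies via $\frac{2}{1+h_x^2}\cdot\frac{V^2}{a^2}=\frac{2h_x^2}{1+h_x^2}$, and the pointwise convexity bound $2VG(h)V-G(h)(V^2)\ge 0$ then yields
\[
-\Sigma_2\le (2p-1)\sup_{\xT}\frac{2h_x^2}{1+h_x^2}\int_{\xT}\big(2VG(h)V-G(h)(V^2)\big)V^{2p-2}\,\dx.
\]
The slope hypothesis \eqref{assuslope1} is exactly the condition $(2p-1)\cdot\frac{2h_x^2}{1+h_x^2}\le p$, which lets the $2VG(h)V$ part be absorbed by $\Sigma_1$, while the remaining $-p\int V^{2p-2}G(h)(V^2)\,\dx$ is handled by a further application of the convexity inequality. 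No local quadratic form in $(V^p)_x$ appears; the argument stays at the level of the nonlocal operator $G(h)$ throughout.
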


\begin{rema}
We shall prove a stronger result which includes a parabolic 
gain of regularity in $L^p$-spaces, see~\e{n141}. By Proposition \ref{prop:bornederiv}, assumption \eqref{assuslope1} can also be reduced to an assumption at time $t=0.$ 
\end{rema}

\subsection{Convexity inequalities}\label{S:convexintro}

We conclude this section by discussing additional identities which will be derived along the proofs.

In~\cite{CC-PNAS-2003,CC-CMP-2004},  C{\'o}rdoba and C{\'o}rdoba proved  
that, for any exponent $\alpha$ in $[0,1]$ and 
any $C^2$ function $f$ decaying sufficiently fast at infinity, 
one has the pointwise inequality
$$
2 f (-\Delta)^\alpha f \ge (-\Delta)^\alpha (f^2).
$$
This inequality has been generalized and applied to many different problems. 
To mention a few results, we quote the papers by Ju~\cite{Ju2005maximum}, 
Constantin and Ignatova~(\cite{ConstantinIgnatova1,ConstantinIgnatova2}), 
Constantin, Tarfulea and Vicol (\cite{ConstantinTV}), and we refer to the numerous references there in. 
Recently, C\'ordoba and Mart\'\i nez (\cite{CordobaM}) proved that
$$
\Phi'(f)G(h)f\ge G(h)\big(\Phi(f)\big)
$$
when $h$ is a $C^2$ function and 
$\Phi(f)=f^{2m}$ for some positive integer $m$. 
For our problems, 
we will need to apply this result for some functions $\Phi$ which are not powers. 
To do so, we will extend the previous result to 
the general case where $\Phi$ is a convex function and $h$ is $C^s$ for some $s>1$.

\begin{prop}\label{P:convexityintro}
Let $s>1$ and consider two functions $f,h$ in $C^s(\xT^n)$. For any $C^2$ convex function $\Phi\colon \xR\to\xR$, it  holds the pointwise inequality
\be\label{convexbis}
\Phi'(f)G(h)f\ge G(h)\big(\Phi(f)\big).
\ee
\end{prop}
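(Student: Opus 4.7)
The natural strategy is to move from the boundary into the fluid domain and use the maximum principle for harmonic/subharmonic functions on $\Omega=\{y<h(x)\}$. Let $u$ and $v$ denote the bounded harmonic extensions in $\Omega$ of $f$ and of $\Phi(f)$ respectively, i.e.\ the functions constructed in \eqref{defi:varphipsi} with data $\psi=f$ and $\psi=\Phi(f)$. Since $h,f\in C^s$ with $s>1$, elliptic regularity ensures that $u,v$ are classical solutions of the Laplace equation, smooth in the interior and of class $C^1$ up to $\Sigma$, so that $\partial_n u\vert_\Sigma$ and $\partial_n v\vert_\Sigma$ exist pointwise and the definition of $G(h)$ as a boundary value of the normal derivative applies.

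The key object is the function $w:=\Phi(u)-v$ in $\Omega$. A direct computation using $\Delta_{x,y}u=0$ gives
\[
\Delta_{x,y}w=\Phi''(u)|\nabla_{x,y}u|^{2}\ge 0,
\]
since $\Phi$ is convex. Thus $w$ is subharmonic in $\Omega$. On the upper boundary $\Sigma$ we have $w\vert_{y=h}=\Phi(f)-\Phi(f)=0$, so the only thing to check in order to apply the maximum principle is the behaviour of $w$ at $y\to -\infty$. Writing $f=\bar f+\tilde f$ with $\bar f$ the mean of $f$ and $\tilde f$ of vanishing mean, one sees that $u(x,y)\to\bar f$ and $v(x,y)\to\overline{\Phi(f)}$ as $y\to-\infty$ (the mean-zero parts of the traces produce harmonic extensions decaying exponentially at $-\infty$ on a periodic strip of infinite depth). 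Consequently
\[
\lim_{y\to-\infty}w(x,y)=\Phi(\bar f)-\overline{\Phi(f)}\le 0,
\]
by Jensen's inequality. Combining the subharmonicity of $w$ with the bounds $w\le 0$ on $\Sigma$ and $\limsup_{y\to-\infty}w\le 0$, a Phragmén--Lindelöf argument (in the periodic in $x$, infinite-depth-in-$y$ strip) yields $w\le 0$ throughout $\Omega$, i.e.\ $\Phi(u)\le v$ in $\Omega$.

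Since $w\le 0$ in $\Omega$ and $w=0$ on $\Sigma$, the outward normal derivative (pointing upward, out of $\Omega$) must satisfy $\partial_n w\ge 0$ pointwise on $\Sigma$. Decomposing
\[
\partial_n w\vert_\Sigma=\Phi'(f)\,\partial_n u\vert_\Sigma-\partial_n v\vert_\Sigma
=\frac{1}{\sqrt{1+|\nabla h|^2}}\Bigl(\Phi'(f)G(h)f-G(h)\bigl(\Phi(f)\bigr)\Bigr),
\]
and multiplying by the positive factor $\sqrt{1+|\nabla h|^2}$, we obtain exactly the pointwise inequality \eqref{convexbis}.

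\textbf{Main obstacles.} The computational core (subharmonicity of $\Phi(u)$, vanishing of $w$ on $\Sigma$, identification of the normal derivative with $G(h)$) is straightforward. The only genuine issue is justifying the maximum principle in the unbounded domain $\Omega$: one must show that $w$ cannot escape to $+\infty$ as $y\to-\infty$, which is where Jensen's inequality and the decay of the mean-zero harmonic extensions enter. A complementary technical point is the boundary regularity: the hypothesis $s>1$ is needed in order to have $u,v\in C^1(\overline{\Omega})$, so that the conclusion $\partial_n w\ge 0$ on $\Sigma$ is a genuine pointwise statement and not merely a distributional one.
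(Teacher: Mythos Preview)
Your proof is correct and follows essentially the same route as the paper: introduce the harmonic extensions of $f$ and $\Phi(f)$, observe that the difference $\Phi(u)-v$ is subharmonic by convexity, vanishes on $\Sigma$, and then read off the sign of its outward normal derivative. The only substantive difference is how the maximum principle is justified on the unbounded domain $\Omega$. You handle the behaviour as $y\to-\infty$ by computing the limits explicitly and invoking Jensen's inequality together with a Phragm\'en--Lindel\"of argument; the paper instead appeals to the version of the maximum principle stated at the top of that section, which assumes $\nabla_{x,y}u\in L^2(\Omega)$ (a hypothesis automatically satisfied by the variational harmonic extensions and, since $\zeta$ is bounded, by $\Phi(\zeta)$ as well). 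Your treatment of the far field is slightly more explicit and self-contained, while the paper's is terser but relies on the $L^2$-framework already set up; both lead to the same conclusion.
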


In particular, for any function $f$, 
one has $2fG(h)f\ge G(h)(f^2)$ and hence the coefficient $\gamma$ defined by \eqref{eq:defgamma} in the equations for $B,V$ satisfies:
$$
\gamma\le 0.
$$
Now, to obtain the $L^p$-estimate for the inverse of $a=1-B$, we begin by computing that 
the function $\zeta\defn a^{-1}$ solves
$$
\partial_t\zeta^p-V\cdot \nabla \zeta^p-p \zeta^{p} G(h)a-p\gamma \zeta^{p+1}=0,
$$
and then we integrate over $\xT^n$. Since we want to prove that the integral of $\zeta^p$ decays and since $\gamma\le 0$, the contribution of the last term has a favorable sign. We then observe that the convexity inequality~\e{convexbis}, applied 
with $\Phi(r)=r^{-p+1}$, implies that 
$$
-\int\zeta^{p} G(h)a\dx\ge 0.
$$
So to complete the proof, it remains only to relate the integral of $V\cdot \nabla \zeta^p$ and 
the one of $\zeta^{p} G(h)a$. To do so, we integrate by parts to make appear the integral of 
$-\cnx(V) \zeta^p$. Then the desired decay estimate for the $L^p$-norm of $\zeta$  follows from 
the identity (see \S\ref{S:4})
$$
G(h)a=\cnx V.
$$
The maximum principle for $a$ then easily follows from the property that the infimum of $a$ 
is the supremum of $1/a$, which is the limit of its $L^p$-norms when $p$ goes to $+\infty$. 

The proof of Theorem~\ref{T:Vp} is quite delicate. 
We begin by establishing the following conservation law:
\be\label{n135}
\fract \int_{\xT} \frac{V^{2p}}{a}\dx+2p \int_{\xT} V^{2p-1} G(h)V\dx+(2p-1)\int_{\xT}\frac{\gamma  V^{2p}}{a^2}\dx=0
\ee
(here the space dimension is $n=1$).  
As in~\e{n212}, the inequality~\e{convexbis} implies that
\be\label{n136}
2p\int_{\xT}  V^{2p-1} G(h)V\dx \ge  \int_{\xT}V^p G(h)V^p\dx. 
\ee
Compared to the proof of Theorem~\ref{T:Bp}, 
the main difficulty is that the contribution of the term coming from $\gamma$ has 
not a favorable sign. Indeed, since $\gamma\le 0$, one has
$$
\int_{\xT} \frac{\gamma  V^{2p}}{a^2}\dx\le 0,
$$
so that one cannot deduce the wanted decay estimate~\e{n301} from \e{n135} and \e{n136}. 
To overcome this difficulty, we shall prove that the positive contribution \e{n136} dominates. To do so, we need 
a new identity relating $\gamma$ and~$V$. 
This is where we need to restrict the problem to space dimension $n=1$.
Indeed, if $n=1$, then one can exploit the fact that
$$
\Delta_{x,y}\big((\partial_x\phi)^2-(\partial_y\phi)^2\big)=0
$$
for any harmonic function $\phi$, to obtain $G(h)(B^2)-2BG(h)B=G(h)(V^2)-2VG(h)V$ which gives that
$$
\gamma=\frac{2}{1+(\partial_xh)^2}\Big(G(h)(V^2)-2V G(h)V\Big).
$$
The assumption~\e{assuslope1} then allows to absorb the contribution of $VG(h)V$ by the 
parabolic gain of regularity~\e{n136}. On the other hand, 
the convexity inequality~\e{convexbis} and the positivity of some coefficient  
imply that the contribution of $G(h)(V^2)$ has a favorable sign, giving some extra parabolic regularity. 
Then we conclude the proof using again  
the identity $G(h)B=-\cnx V$. 

\subsection{Organisation of the paper}

We begin in Section~\ref{S:DN} by recalling various results for the Dirichlet to Neumann operator. 
Then in Section~\ref{S:max} we recall the Zaremba principle and apply this result 
to prove that: $i)$ the Taylor coefficient $1-B$ is always positive (this is a classical result), 
$ii)$ the comparaison principle $h_1\le h_2$ stated in Proposition~\ref{prop:orderingintro}, 
$iii)$ the convexity inequality $\Phi'(f)G(h)f\ge G(h)\big(\Phi(f)\big)$ of Proposition~\ref{P:convexityintro}. 

The identities for $B$ and $V$ stated in Proposition~\ref{Prop:eqBV} are proved in Section~\ref{S:eqBV}. 
In the same section, we use 
these identities to prove Proposition~\ref{P:energyconvex} 
(see \S\ref{S:energyconvex}). 

The sharp maximum principle for all derivatives is proved in Section~\ref{S:sharpmax}. 
Then Theorem~\ref{T:Bp} is 
proved in Section~\ref{S:Lp1} and Proposition~\ref{T:Vp} in Section~\ref{S:Lp2}. 

The Cauchy problem is studied in Section~\ref{S:Cauchy}.

\section{The Dirichlet to Neumann operator}\label{S:DN}

We gather in this section some results about the Dirichlet to Neumann operator in domains 
with H\"older regularity. 

For $s\in ]0,+\infty[\setminus \xN$, we denote by 
$C^s$ the space of bounded functions whose derivatives of order $[s]$  are uniformly H\"older continuous with 
exponent $s-[s]$. 
\begin{prop}\label{P:2.2}
Consider two numbers $s,\sigma$ such that
$$
0 <\sigma \le s,\quad 1 < s\le +\infty, \quad \sigma\not \in \xN,\quad s\not \in  \xN.
$$
Let $h\in C^s(\xT^n)$ and introduce the domain
$$
\Omega=\{ (x,y)\in \xT^n\times \xR\,:\, y<h(x)\}.
$$
For any function $\psi\in C^\sigma(\xT^n)$, there exists a unique function 
$\phi\in C^\infty(\Omega)\cap C^\sigma(\overline{\Omega})$ such that~$\nabla_{x,y}\phi$ belongs to $L^2(\Omega)$ and
\begin{equation}\label{n2.2}
\left\{
\begin{aligned}
&\Delta_{x,y}\phi=0\quad\text{in }\Omega,\\
&\phi(x,h(x))=\psi(x).
\end{aligned}
\right.
\end{equation}
\end{prop}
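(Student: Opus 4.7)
My strategy is to reduce the problem to a Dirichlet problem on a flat half-space, solve it by a variational method, and then transfer $C^\sigma$ regularity up to the boundary via Schauder estimates; uniqueness is treated separately by an energy argument tailored to infinite depth. Rather than the naive flattening $y\mapsto y-h(x)$---which produces coefficients of class only $C^{s-2}$, distributional when $s$ is close to $1$---I use the regularizing extension $\tilde h(x,z)\defn e^{z\L{D_x}}h(x)$ for $z\le 0$. It coincides with $h$ at $z=0$, is $C^\infty$ in $\{z<0\}$, and preserves the $C^s$ size of $h$ uniformly in $z$. After a harmless vertical rescaling to ensure $\partial_z\rho>0$, the map $\rho(x,z)=(x,z+\tilde h(x,z))$ is a bi-Lipschitz $C^1$-diffeomorphism of $\xT^n\times(-\infty,0]$ onto $\overline{\Omega}$, and the Dirichlet problem \e{n2.2} becomes, for $\varphi\defn \phi\circ\rho$,
\[
L\varphi=0 \text{ on } \xT^n\times(-\infty,0),\qquad \varphi|_{z=0}=\psi,
\]
where $L$ is a uniformly elliptic second-order operator with bounded coefficients of class $C^{s-1}$ in $x$ with $s-1>0$, and $C^\infty$ in $z$ on $\{z<0\}$.

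\emph{Existence.} Subtracting the free harmonic extension $\psi^\sharp\defn e^{z\L{D_x}}\psi$, which lies in $\dot H^1(\xT^n\times(-\infty,0))$, the remainder $w\defn\varphi-\psi^\sharp$ solves $Lw=-L\psi^\sharp\in L^2$ with $w|_{z=0}=0$. Lax--Milgram applied to the coercive bilinear form associated with $L$, posed on the space of finite-energy functions with vanishing trace at $z=0$, yields a unique $w$ and hence a finite-energy $\varphi$. Going back through $\rho$ produces the desired $\phi$, with $\partialyx\phi\in L^2(\Omega)$. Interior smoothness of $\phi$ on $\Omega$ is then automatic, since $\phi$ is harmonic.

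\emph{Boundary regularity.} The $C^\sigma$ regularity up to $\Sigma$ is obtained from classical Schauder boundary estimates for $L$. Its coefficients are of class $C^{s-1}$ with $s-1>0$ up to $\{z=0\}$, and the assumption $\sigma\le s$ is exactly what allows a Schauder bootstrap to propagate $\psi\in C^\sigma$ to $\varphi\in C^\sigma(\xT^n\times [-1,0])$; the finite-energy decay of $\varphi$ at $z\to-\infty$ handles the far-field. Pulling back through $\rho$ yields $\phi\in C^\sigma(\overline{\Omega})$.

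\emph{Uniqueness.} If $\phi$ is the difference of two such solutions, it is harmonic, vanishes on $\Sigma$, and $\partialyx\phi\in L^2(\Omega)$. Let $\chi_R$ be a vertical cutoff equal to $1$ on $\{y>-R\}$, $0$ on $\{y<-2R\}$, with $|\partialyx\chi_R|\le C/R$. Testing $\Deltayx\phi=0$ against $\chi_R\phi$ and using that the boundary term on $\Sigma$ vanishes yields
\[
\int_\Omega \chi_R|\partialyx\phi|^2\,\dx\dy=-\int_\Omega \phi\,(\partialyx\chi_R)\cdot\partialyx\phi\,\dx\dy.
\]
The pointwise bound $|\phi(x,y)|\le (h(x)-y)^{1/2}\|\partial_y\phi(x,\cdot)\|_{L^2(-\infty,h(x))}$, from the vanishing trace and Cauchy--Schwarz, gives $\|\phi\|_{L^2(\{-2R<y<-R\})}\le CR\,\|\partialyx\phi\|_{L^2(\Omega)}$, so the right-hand side is bounded by $C\|\partialyx\phi\|_{L^2(\{-2R<y<-R\})}\,\|\partialyx\phi\|_{L^2(\Omega)}\to 0$ as $R\to\infty$. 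Hence $\partialyx\phi\equiv 0$, and the zero trace forces $\phi\equiv 0$. The chief obstacle throughout is the already-highlighted low graph regularity, which is what forces the smoothing extension in the very first step; every subsequent step then reduces to standard elliptic machinery.
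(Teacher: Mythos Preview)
Your outline is the standard route to the full statement, but it is worth noting that the paper does \emph{not} prove this proposition: its entire ``proof'' is the sentence ``This is classical when $s=+\infty$ (which is the only case required to justify the computations in this paper).'' In other words, the authors only need the smooth case, for which existence, uniqueness and boundary regularity are textbook elliptic theory, and they explicitly decline to treat the limited-regularity range. You, by contrast, sketch the genuine argument for $1<s<\infty$: the regularizing flattening $\tilde h=e^{z\langle D_x\rangle}h$ (precisely the device used in \cite{LannesJAMS,ABZ3} to avoid losing a derivative on the coefficients), variational existence via Lax--Milgram, boundary Schauder, and an energy/cut-off uniqueness argument at infinite depth. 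That is the right machinery and the right order of operations.

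One point deserves care. Your existence step asserts $\psi^\sharp=e^{z\langle D_x\rangle}\psi\in\dot H^1$, which is equivalent to $\psi\in H^{1/2}(\xT^n)$. For $\sigma>1/2$ this follows from $C^\sigma(\xT^n)\hookrightarrow H^{1/2}(\xT^n)$, but for $0<\sigma\le 1/2$ it can fail (e.g.\ a Weierstrass-type function $\sum 2^{-k\sigma}\cos(2^k x)$ is in $C^\sigma$ but not in $H^{1/2}$ when $\sigma\le 1/2$). This is not a flaw in your strategy so much as a tension in the statement itself: the requirement $\nabla_{x,y}\phi\in L^2(\Omega)$ forces, via the trace theorem, $\psi\in H^{1/2}$, so the proposition as written implicitly needs this. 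In the paper's applications one always has $\sigma>1$ (see the corollary immediately following), so the issue never arises; but if you want a self-contained proof in the stated range $0<\sigma\le s$, you should either add the hypothesis $\psi\in H^{1/2}$ or restrict to $\sigma>1/2$.
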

\begin{proof}
This is classical when $s=+\infty$ (which is the only case required to justify the computations 
in this paper). 
\end{proof}

In the sequel we shall call the unique such $\phi$ the variational solution.

\begin{coro}\label{estiDN}
Consider two numbers $s,\sigma$ such that
\be\label{a:2.2}
1 <\sigma \le s\le +\infty, \quad \sigma\not \in \xN,\quad s\not \in \xN.
\ee
If $\psi \in C^\sigma(\xR^d)$ and $h\in C^{s} ( \xR ^d)$, then
$$
G(h)\psi\in C^{\sigma-1}(\xR^d).
$$
\end{coro}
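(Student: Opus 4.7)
The plan is to flatten the fluid domain by a diffeomorphism, convert the Laplace equation into a variable-coefficient elliptic equation on a half-space with Hölder coefficients, and then invoke Schauder-type regularity up to the boundary. Concretely, I would set $(x,z)\mapsto (x,h(x)+z)$, which sends $\{z<0\}$ onto $\Omega$, and introduce $\vvarphi(x,z)\defn \phi(x,h(x)+z)$, where $\phi$ is the variational solution provided by Proposition~\ref{P:2.2}. A direct chain-rule computation shows that $\vvarphi$ satisfies an equation of the form
\[
(1+|\nabla h|^2)\partial_z^2\vvarphi+\Delta_x\vvarphi-2\nabla h\cdot\nabla_x\partial_z\vvarphi-(\Delta h)\partial_z\vvarphi=0\quad\text{in }\{z<0\},
\]
with boundary condition $\vvarphi(x,0)=\psi(x)$. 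The coefficients belong to $C^{s-1}$, while the Dirichlet data is of class $C^\sigma$.

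Next I would apply interior and boundary Schauder estimates to this elliptic problem. Since $s-1\geq \sigma-1>0$ and neither $\sigma$ nor $\sigma-1$ is an integer, standard Hölder theory for equations in non-divergence form with Hölder coefficients (e.g.\ Gilbarg–Trudinger type estimates, or equivalently a paradifferential argument as in~\cite{AM,ABZ1,ABZ3}) yields $\vvarphi\in C^\sigma$ up to $\{z=0\}$, and in particular $\nabla_x\vvarphi\arrowvert_{z=0}$ and $\partial_z\vvarphi\arrowvert_{z=0}$ both belong to $C^{\sigma-1}(\xT^n)$.

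Finally, it suffices to rewrite $G(h)\psi$ in terms of these boundary traces. Using that $(\partial_y\phi)\arrowvert_{y=h}=\partial_z\vvarphi\arrowvert_{z=0}$ and $(\partial_{x_j}\phi)\arrowvert_{y=h}=\partial_{x_j}\vvarphi\arrowvert_{z=0}-\partial_{x_j}h\,\partial_z\vvarphi\arrowvert_{z=0}$, one finds
\[
G(h)\psi=(1+|\nabla h|^2)\partial_z\vvarphi\arrowvert_{z=0}-\nabla h\cdot\nabla_x\psi.
\]
Since $\nabla h\in C^{s-1}\subseteq C^{\sigma-1}$ and $C^{\sigma-1}$ is an algebra when $\sigma-1>0$, the right-hand side lies in $C^{\sigma-1}(\xT^n)$, which is the desired conclusion.

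The main obstacle is the Schauder step, because when $\sigma$ is only slightly above $1$ the coefficients are barely Hölder continuous and the boundary data is of low regularity; the non-integrality hypothesis and the fact that $s-1\geq \sigma-1>0$ (so the coefficients lie in a space at least as regular as the target) are exactly what make the product-rule bounds and the boundary Schauder estimate close. For the case $s=+\infty$ this is of course trivial, and that is the only case needed in the remainder of the paper.
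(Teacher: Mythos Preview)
Your argument is correct, but the paper's proof is essentially two lines and proceeds differently. The paper simply invokes Proposition~\ref{P:2.2} as a black box: that proposition already asserts $\phi\in C^\sigma(\overline{\Omega})$, so $\nabla_{x,y}\phi\in C^{\sigma-1}(\overline{\Omega})$ (here $\sigma>1$), and one takes the trace in the defining formula $G(h)\psi=(\partial_y\phi-\nabla h\cdot\nabla\phi)\arrowvert_{y=h}$, using that $C^{\sigma-1}$ is an algebra. What you wrote---flattening plus Schauder---is in effect a sketch of how one proves Proposition~\ref{P:2.2}, so you are re-deriving the input rather than citing it. Your route is more self-contained; the paper's is shorter because the regularity of the harmonic extension was already packaged upstream.

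One small caution on your Schauder step: in the flattened equation the lower-order coefficient is $\Delta h$, which for $1<s<2$ lies only in $C^{s-2}$ and need not be bounded, so Gilbarg--Trudinger non-divergence Schauder does not apply as stated. The fix is to rewrite the equation in divergence form $\cnx_{x,z}(A\nabla_{x,z}\vvarphi)=0$ with $A$ depending only on $\nabla h\in C^{s-1}$, or to use the paradifferential approach you allude to. You note correctly that the smooth case $s=+\infty$ is all that is actually used later in the paper.
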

\begin{proof}
Since
$$
G(h)\psi=(\partial_y \phi -\nabla h\cdot \nabla \phi)\arrowvert_{y=h},
$$
this result follows from Proposition~\ref{P:2.2}. Indeed, since $\nabla \phi$ belongs to $C^{\sigma-1}(\Omega)$ with $\sigma-1>0$ 
by assumption~\e{a:2.2}, 
one can take the trace on the boundary $\{y=h\}$.
\end{proof}

The expression $G(h)\psi$ is linear in $\psi$ but depends nonlinearly in $h$. 
This is the main difficulty to study the Hele-Shaw equation. The following result 
helps to understand the dependence in $h$.
\begin{prop}\label{P:shape}
Consider two real numbers $s,\sigma$ such that
$$
1 <\sigma \le s\le +\infty, \quad \sigma\not \in \xN,\quad s\not \in \xN.
$$
Let $\psi \in C^\sigma(\xT^n)$ and $h\in C^{s} (\xT^n)$. 
Then there is a neighborhood $\mathcal{U}_h\subset C^{s}(\xT^n)$ of $h$ such that
the mapping
$$
\eta \in \mathcal{U}_h \mapsto G(\eta)\psi \in C^{\sigma-1}(\xT^n)
$$
is differentiable. Moreover, for all $\zeta\in C^{s}(\xT^n)$, we have 
\be\label{n:shape}
d G(h)\psi \cdot  \zeta \defn
\lim_{\eps\rightarrow 0} \frac{1}{\eps}\big\{ G(h+\eps \zeta)\psi -G(h)\psi\big\}
= -G(h)(\mathfrak{B}\zeta) -\cnx (\mathfrak{V}\zeta),
\ee
where
$$
\mathfrak{B}=\frac{G(h)\psi+\partialx h\cdot\partialx\psi}{1+\la \partialx h\ra^2},
\quad \mathfrak{V} =\partialx\psi-\mathfrak{B}\partialx h.
$$
\end{prop}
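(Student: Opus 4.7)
The key preliminary observation is that $\mathfrak{B}$ and $\mathfrak{V}$ are nothing else than the traces $B \defn (\partial_y\varphi)\ah$ and $V\defn (\partialx \varphi)\ah$ of the harmonic extension $\varphi$ of $\psi$ in $\Omega$. Indeed, differentiating $\psi(x) = \varphi(x,h(x))$ gives $\partialx\psi = V + B\partialx h$, and the definition of $G(h)\psi$ gives $G(h)\psi = B - \partialx h\cdot V$; plugging these into the formulas defining $\mathfrak{B}$ and $\mathfrak{V}$ yields $\mathfrak{B}=B$, $\mathfrak{V}=V$. So the identity to prove reads
\[
d G(h)\psi\cdot \zeta = -G(h)(B\zeta) - \cnx(V\zeta).
\]

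To handle the differentiability and the differentiation under a moving boundary, I would pull everything back to a fixed reference half-space via the diffeomorphism $(x,z)\mapsto (x,z+h(x)+\eps\zeta(x))$. In the variable $z<0$, the harmonic extension $\varphi^\eps$ of $\psi$ in $\Omega_{h+\eps\zeta}$ becomes the solution of an elliptic equation in divergence form on $\xT^n\times (-\infty,0)$ whose coefficients depend smoothly on $\eps$ (via the Jacobian of the change of variables), with the \emph{fixed} Dirichlet trace $\psi$ at $z=0$. The implicit function theorem in $C^{\sigma}$-type elliptic theory then produces a smooth dependence $\eps \mapsto \varphi^\eps$, which after going back to the original variables gives a shape derivative $\dot\varphi := \partial_\eps \varphi^\eps|_{\eps=0}$ that is harmonic in $\Omega_h$. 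Comparing the boundary conditions $\varphi^\eps(x,h+\eps\zeta)=\psi=\varphi(x,h)$ by Taylor expansion yields the trace
\[
\dot\varphi\ah = -\zeta B .
\]

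With these ingredients, I would derive the formula by applying the chain rule to $G(h+\eps\zeta)\psi(x) = F(x,h(x)+\eps\zeta(x),\eps)$ with $F(x,y,\eps) := (\partial_y\varphi^\eps - \partialx(h+\eps\zeta)\cdot\partialx \varphi^\eps)(x,y)$. This produces
\[
\frac{d}{d\eps}\Big|_{\eps=0}G(h+\eps\zeta)\psi = \zeta\,\bigl[\partial_y^2\varphi - \partialx h\cdot \partialx\partial_y\varphi\bigr]\ah + \bigl[\partial_y\dot\varphi - \partialx h\cdot\partialx\dot\varphi\bigr]\ah - \partialx\zeta\cdot V.
\]
The middle bracket is exactly $G(h)\dot\varphi\ah = -G(h)(B\zeta)$ by the trace formula for $\dot\varphi$. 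For the first bracket, differentiating $V(x)=\partialx\varphi(x,h(x))$ in $x$ gives $\cnx V = \Delta_x\varphi\ah + \partialx h\cdot \partial_y\partialx\varphi\ah$, and the harmonicity $\Delta_{x,y}\varphi = 0$ converts $\Delta_x\varphi$ into $-\partial_y^2\varphi$, leaving $\bigl[\partial_y^2\varphi - \partialx h\cdot\partialx\partial_y\varphi\bigr]\ah = -\cnx V$. Collecting the three contributions yields $-G(h)(B\zeta) -\zeta\cnx V - \partialx\zeta\cdot V = -G(h)(\mathfrak{B}\zeta) - \cnx(\mathfrak{V}\zeta)$, as required.

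\textbf{Main obstacle.} The genuinely delicate point is not the algebraic identity, which is essentially forced once $\dot\varphi\ah$ is known, but the rigorous justification of the differentiability of $\eps\mapsto \varphi^\eps$ in $C^{\sigma}(\overline{\Omega_{h+\eps\zeta}})$ up to the boundary when $s$ is finite and Hölder. This is why straightening the boundary first is the right move: it turns a free-boundary variation into a standard smooth dependence of coefficients, for which the implicit function theorem and Schauder estimates apply directly.
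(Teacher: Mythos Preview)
Your proposal is correct, and in fact it goes further than the paper itself: the paper does not supply a proof of this proposition at all, but simply cites Lannes~\cite{LannesJAMS} for the smooth case and remarks that this is the only case needed to justify the computations in the rest of the article. Your sketch --- flattening the boundary, invoking the implicit function theorem with Schauder theory for the smooth dependence on $\eps$, computing the trace $\dot\varphi\ah=-\zeta B$ by Taylor expanding the boundary condition, and then carrying out the chain-rule calculation --- is exactly the standard route (and essentially what Lannes does). The algebraic identifications $\mathfrak{B}=B$, $\mathfrak{V}=V$ and the use of harmonicity to convert $[\partial_y^2\varphi-\partialx h\cdot\partialx\partial_y\varphi]\ah$ into $-\cnx V$ are both correct and are in fact reused by the paper later (see the proof of Proposition~\ref{cancellation}). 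Your closing remark that the genuinely delicate point is the differentiability in finite H\"older regularity, and that flattening is what makes it routine, is also well taken; the paper sidesteps this by working only with smooth $h$.
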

\begin{proof}
This is proved by Lannes (see~\cite{LannesJAMS}) when 
the functions are smooth, which is the only case required to justify the computations 
in this article.
\end{proof}

\section{Maximum principles}\label{S:max}

In this section, we discuss several applications of Zaremba's principle. 
We begin by recalling the classical maximum principle.

\begin{prop}
Let $h\in C^1(\xT^n)$ and set
$$
\Omega=\{ (x,y)\in \xT^n\times \xR\,:\, y<h(x)\}.
$$
Consider a function $u\in C^2(\Omega)\cap C^0(\overline{\Omega})$ with $\nabla_{x,y}u\in L^2(\Omega)$ 
satisfying
$$
-\Delta u\ge 0\quad\text{in }\Omega,\qquad 
u\ge0 \quad \text{on }\partial\Omega.
$$
Then
$$
u\ge 0.
$$ 
\end{prop}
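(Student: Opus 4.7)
The plan is to run the standard $H^1$-energy argument for the weak maximum principle; the only delicate point is the unboundedness of $\Omega$ at $y=-\infty$, which is handled via the hypothesis $\nabla_{x,y}u\in L^2(\Omega)$.

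I would set $v:=\max(-u,0)$. Then $v\ge 0$, $\nabla v\in L^2(\Omega)$ (since pointwise $|\nabla v|\le|\nabla u|$), and $v$ vanishes on $\partial\Omega=\{y=h(x)\}$ thanks to the boundary hypothesis $u\ge 0$. The goal is to show $v\equiv 0$. Introduce a smooth vertical cutoff $\chi_R(y)$ equal to $1$ for $y\ge -R$, to $0$ for $y\le -2R$, with $|\chi_R'|\le 2/R$. Since $u\in C^2(\Omega)$ and $\chi_R v$ is a nonnegative Lipschitz function vanishing on $\partial\Omega$ with support in $\overline{\Omega}\cap\{y\ge -2R\}$, the inequality $-\Delta u\ge 0$ together with Green's formula yields
\[
0\le\int_\Omega(-\Delta u)\,\chi_R v\,\dx\dy=\int_\Omega\nabla u\cdot\nabla(\chi_R v)\,\dx\dy.
\]
Using the pointwise chain rule identity $\nabla u\cdot\nabla v=-|\nabla v|^2$ a.e.\ (valid because $\nabla v=-\nabla u\,\mathbf{1}_{\{u<0\}}$), and expanding $\nabla(\chi_Rv)=\chi_R\nabla v+v\chi_R'(y)\nabla y$, this rewrites as
\[
\int_\Omega\chi_R|\nabla v|^2\,\dx\dy\le\int_\Omega v\,\chi_R'(y)\,\partial_y u\,\dx\dy.
\]

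Next I would show the right-hand side vanishes as $R\to\infty$. By Cauchy-Schwarz it is bounded by $\|v\chi_R'\|_{L^2(\Omega)}\cdot\|\partial_y u\|_{L^2(\{-2R<y<-R\})}$, and the second factor tends to $0$ because $\nabla u\in L^2(\Omega)$ and the slabs $\{-2R<y<-R\}$ escape to infinity. For the first factor, since $v(x,h(x))=0$ one has $v(x,y)=-\int_y^{h(x)}\partial_yv(x,s)\,ds$, and Cauchy-Schwarz gives the vertical Poincaré estimate $v(x,y)^2\le(h(x)-y)\int_{-\infty}^{h(x)}|\partial_yv|^2\,ds$; integrating this over the strip $\{-2R<y<-R\}$ (of vertical width $R$, on which $h(x)-y\le CR$) and multiplying by $|\chi_R'|^2\le 4/R^2$ yields the uniform bound $\|v\chi_R'\|_{L^2(\Omega)}^2\le C\|\nabla v\|_{L^2(\Omega)}^2$.

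Letting $R\to\infty$, the left-hand side converges by monotone convergence to $\int_\Omega|\nabla v|^2$, so one obtains $\int_\Omega|\nabla v|^2\le 0$. Therefore $\nabla v\equiv 0$; since $\Omega$ is connected and $v$ vanishes on $\partial\Omega$, this forces $v\equiv 0$, i.e., $u\ge 0$ in $\Omega$. The hard part is precisely the uniform Poincaré bound for $\|v\chi_R'\|_{L^2}$: the smallness $|\chi_R'|\sim 1/R$ has to absorb the worst-case growth of $v$ in the cutoff strip of width $R$, and this balance works only thanks to the boundary condition $v=0$ at the top combined with the finiteness of $\|\nabla v\|_{L^2}$ at the very bottom.
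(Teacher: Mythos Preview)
The paper does not actually prove this proposition: it is stated at the start of Section~\ref{S:max} as ``recalling the classical maximum principle'' and is used without proof. So there is no argument in the paper to compare your approach to.

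Your energy/Stampacchia argument is correct and is a natural way to supply the missing details, the unboundedness of $\Omega$ in the $y$-direction being the only point requiring care. Two small remarks. First, calling $\chi_R v$ ``Lipschitz'' is a slight overstatement: since $u$ is only $C^0$ up to $\partial\Omega$, the function $v=\max(-u,0)$ need not be globally Lipschitz. What you actually need (and have) is $\chi_R v\in H^1_0(\Omega)$, $\chi_R v\ge 0$: it is continuous on $\overline{\Omega}$, vanishes on $\partial\Omega$, has compact support in $\overline{\Omega}$, and has $L^2$ weak gradient. Approximating by nonnegative $C_c^\infty(\Omega)$ functions then gives $0\le\int_\Omega\nabla u\cdot\nabla(\chi_R v)$ directly, without needing $\partial_n u$ on the boundary. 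Second, for the fundamental-theorem-of-calculus identity $v(x,y)=-\int_y^{h(x)}\partial_y v(x,s)\,ds$ you are implicitly using that, for a.e.\ $x$, $\partial_y v(x,\cdot)\in L^2((-\infty,h(x)))\subset L^1_{\rm loc}$ together with the continuity of $v$ up to $y=h(x)$; this is fine but worth making explicit. With these cosmetic adjustments your proof goes through cleanly.
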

The original version of the Zaremba principle (see~\cite{Zaremba}) states that, if 
$\partial \Omega$ is $C^2$ and $x_0\in \partial \Omega$, then
$$
-\Delta u=f\ge 0 \quad\text{in }\Omega, \quad 
u(x)>u(x_0) \quad \text{in }\Omega \quad 
\Rightarrow \quad \partial_n u(x_0)<0. 
$$
We shall use a version which holds in domain which are less regular (see Safonov~\cite{Safonov}, 
Apushkinskaya-Nazarov~\cite{Apushkinskaya-Nazarov} and 
Nazarov~\cite{Nazarov-SIAM}).

\begin{theo}
Let $h\in C^s(\xT^n)$ with $s\in (1,+\infty)$ and set
$$
\Omega=\{ (x,y)\in \xT^n\times \xR\,:\, y<h(x)\}.
$$
Consider a function
$$
u \in C^2(\Omega)\cap C^1(\overline{\Omega}),
$$
satisfying $\Delta u\le 0$. 
If $u$ attains its minimum at a point $x_0$ of 
the boundary, then
$$
\partial_n u(x_0)<0.
$$ 
\end{theo}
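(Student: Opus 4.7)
My plan is a localized barrier argument, a low-regularity adaptation of the classical Hopf boundary point lemma. Since $s>1$, the graph function $h$ is of class $C^1$ (indeed $C^{1,\alpha}$ with $\alpha = s-\lfloor s\rfloor$ when $s<2$), so the outward unit normal $n$ is defined pointwise on $\partial\Omega$ and the surface admits a tangent hyperplane at $x_0$. By translation and rotation I reduce to $x_0=0$ with $n(x_0)=e_{n+1}$, whence locally
\[
\Omega \cap B_r(0) = \{(x',y)\in B_r(0) : y < \psi(x')\},
\]
for some $\psi \in C^s$ with $\psi(0)=0$, $\nabla\psi(0)=0$ and $|\psi(x')| \leq C|x'|^{\min(s,2)}$. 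I also assume $u$ is non-constant, which by the strong maximum principle applied to $\Delta u \leq 0$ is equivalent to $u(X) > u(0)$ for every $X\in\Omega$ (the alternative being the trivial case $u \equiv u(0)$, for which the strict conclusion fails and is implicitly excluded by the statement).

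The heart of the argument is to produce an interior subdomain $D \subset \Omega$ with $\overline D \cap \partial\Omega = \{0\}$, together with a $C^2$ barrier $w$ on $\overline D$ satisfying $\Delta w \geq 0$ in $D$, $w(0)=0$, $w > 0$ on $\overline D \setminus \{0\}$, and $\partial_n w(0) < 0$. If the boundary were $C^{1,1}$ one could take $D$ to be an annular region inside an interior ball tangent to $\partial\Omega$ at $0$, with the classical $w(X) = e^{-\lambda|X-X_c|^2} - e^{-\lambda R^2}$ for $\lambda$ large. In the Hölder regime $1<s<2$ there is no interior tangent sphere, and one must instead use a paraboloid-shaped region $D = \{y < -\beta|x'|^{1+\alpha}\} \cap B_r(0)$, contained in $\Omega$ thanks to the control on $\psi$, and build $w$ as the pull-back of an exponential barrier through a smooth diffeomorphism flattening $\partial D$ near $0$; this is the content of the Safonov and Nazarov--Apushkinskaya constructions.

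With $w$ in hand the conclusion is obtained by applying the weak maximum principle to
\[
v := u - u(0) - \varepsilon w.
\]
One has $\Delta v \leq 0$ in $D$; on $\partial D \setminus \{0\}$ the continuity of $u$ together with $u > u(0)$ in $\Omega$ gives $u - u(0) \geq \delta > 0$, hence $v \geq 0$ there as soon as $\varepsilon \leq \delta / \|w\|_{L^\infty(\overline D)}$; and $v(0) = 0$. The weak maximum principle then yields $v \geq 0$ throughout $D$, so $v(0)=0$ is the minimum of $v$ on $\overline D$, and the one-sided normal derivative along $-n$ at $0$ satisfies $\partial_n u(0) \leq \varepsilon\, \partial_n w(0) < 0$.

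The main obstacle is the barrier construction in the regime $1<s<2$. The absence of an interior sphere condition forces the paraboloid-shaped subdomain $D$ to be chosen compatibly with the Hölder exponent $\alpha$, and the verification that $\Delta w \geq 0$ after flattening amounts to dominating the lower-order perturbations produced by the diffeomorphism by the principal part of the Laplacian. This is precisely the technical content of the refinements cited by the authors (Safonov and Apushkinskaya--Nazarov) and is the reason a classical textbook reference does not suffice. For $s \geq 2$ the argument collapses to the standard Hopf proof via an interior ball.
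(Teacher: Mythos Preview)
The paper does not actually prove this theorem: it is stated as a known result, with references to Safonov, Apushkinskaya--Nazarov, and Nazarov for the extension of the classical Zaremba--Hopf lemma to domains whose boundary is merely $C^{1,\alpha}$. There is thus no ``paper's own proof'' against which to compare your argument.

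Your outline is faithful to the strategy in those references: reduce locally, invoke the strong maximum principle to ensure $u>u(x_0)$ in $\Omega$ (with the implicit non-constancy caveat you correctly flag), and then run a barrier comparison in a subdomain $D$ touching $\partial\Omega$ only at $x_0$. You also correctly identify the dichotomy: for $s\ge 2$ an interior ball exists and the textbook Hopf barrier suffices, whereas for $1<s<2$ the interior sphere condition fails and one must replace the ball by a cusp-like (paraboloidal) region matched to the H\"older exponent, with a corresponding barrier. That last construction is the substantive content of the cited papers, and you explicitly defer it to them---so your write-up is really a proof \emph{plan} rather than a self-contained proof. Since the paper itself only cites the result, this is consistent with the paper's treatment; just be aware that as written your argument is not complete without importing the Safonov/Nazarov barrier construction in the regime $1<s<2$.

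One small technical remark: the base space here is $\xT^n\times\xR$, so the rotation you perform to send $n(x_0)$ to $e_{n+1}$ is only available after first localizing to a Euclidean chart. This is harmless for the argument (everything is local near $x_0$), but worth stating explicitly.
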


In this section, we shall work out three applications of this argument. 

\subsection{Positivity of the Rayleigh--Taylor coefficient}

Our first application of Zaremba's principle is not new: 
we prove that the Rayleigh--Taylor stability condition is satisfied 
(see \cite{CCFGLF-Annals-2012,CCFG-ARMA-2013,CCFG-ARMA-2016,Cheng-Belinchon-Shkoller-AdvMath}).

\begin{prop}\label{Coro:Zaremba-Taylor}
Let $h\in C^s(\xT^n)$ with $s\in (1,+\infty)$ and set
$$
\Omega=\{ (x,y)\in \xT^n\times \xR\,:\, y<h(x)\}.
$$
Define $\phi$ as the variational solution to
\begin{equation*}
\Delta_{x,y}\phi=0\quad\text{in }\Omega,\quad \phi(x,h(x))=h(x).
\end{equation*}
and set $B= (\partial_y \phi)\ah$. 
Then
$$
1-B>0.
$$
\end{prop}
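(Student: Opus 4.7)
The plan is to introduce the harmonic auxiliary function $u(x,y) \defn y - \phi(x,y)$ and realize $1-B$ as a positive multiple of the outward normal derivative of $u$ on $\Sigma$. By construction $u$ is harmonic in $\Omega$ (as $y$ and $\phi$ both are), and the Dirichlet condition $\phi(x,h(x))=h(x)$ translates into $u\equiv 0$ on $\Sigma$.

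My first step would be to show $u\le 0$ throughout $\Omega$. Since $\nabla_{x,y}\phi \in L^2(\Omega)$ and the domain is unbounded below, $\phi$ has controlled behavior as $y\to -\infty$, whereas $u(x,y)\sim y\to -\infty$ in that limit. Applying the classical maximum principle on the bounded truncations $\Omega_N = \Omega\cap\{y>-N\}$ --- on the top component of $\partial\Omega_N$ one has $u\equiv 0$, while on the bottom component $u$ is uniformly very negative for $N$ large --- gives $u\le 0$ in $\Omega_N$; letting $N\to\infty$ yields $u\le 0$ throughout $\Omega$. I expect this to be the main technical point, as it requires using the variational characterization of $\phi$ to rule out linear growth at depth.

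Next, I would apply the Zaremba principle to the harmonic function $-u$, which satisfies $-u\ge 0$ in $\Omega$ and $-u\equiv 0$ on $\Sigma$, hence attains its minimum at every boundary point. Since $-u$ is not identically zero (otherwise $\phi\equiv y$, contradicting $\nabla_{x,y}\phi\in L^2$), the principle yields $\partial_n(-u)(x_0)<0$ at every $x_0\in\Sigma$, i.e.\ $\partial_n u>0$ along~$\Sigma$.

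Finally, since $u$ vanishes identically on~$\Sigma$, all its tangential derivatives vanish there too, so $\nabla_{x,y} u\ah$ is colinear with $n$ and equal to $(\partial_n u)\,n$. Extracting the $y$-component and using that the $y$-component of $n$ equals $1/\sqrt{1+|\partialx h|^2}>0$, I obtain $1-B = \partial_y u\ah = \partial_n u / \sqrt{1+|\partialx h|^2} > 0$, which is the desired conclusion.
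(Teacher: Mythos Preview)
Your proposal is correct and follows essentially the same strategy as the paper: the paper introduces $P=\phi-y=-u$, notes it is harmonic and vanishes on $\Sigma$, uses the decay of $\partial_y\phi$ at depth together with the Zaremba principle to locate the minimum of $P$ on $\Sigma$, and then applies Zaremba again there to get $\partial_n P<0$, converting to $1-B>0$ via the same chain-rule computation you give. The only organizational difference is that the paper works on a fixed truncation $\Omega_\ell$ and applies Zaremba twice, while you first pass to the limit to obtain $u\le 0$ on all of $\Omega$ and then apply Zaremba once.
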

\begin{proof}
As already mentioned this result is not new when the free surface is smoother. 
We repeat here a classical proof in the water-waves theory (see~\cite{WuInvent,LannesJAMS}), in order 
to carefully check that the result remains valid when the boundary is only $C^s$ with $s>1$.

Given $\ell>0$, set
$$
\Omega_\ell=\left\{ (x,y)\in\xT^n\times\xR\,;\, -\ell<y<h(x)\right\},
$$
and introduce
$$
P=\phi-y.
$$
Then $P$ is an harmonic function in $\Omega_\ell$ vanishing on $\Sigma\defn \{y=h(x)\}$. 
Moreover, since~$\partial_y\phi$ goes to $0$ when $y$ goes to $-\infty$, one gets that, if~$\ell$ is large enough, then
$$
\partial_yP\arrowvert_{y=-\ell}<0.
$$
Since $\partial_nP=-\partial_y P$ on $\{y=-\ell\}$, one infers from the Zaremba principle that $P$ cannot reach its minimum on $\{y=-\ell\}$. So $P$ reaches its minimum on $\Sigma$. On the other hand, $P$ is constant on $\Sigma$. This shows that 
$P$ reaches its minimum on any point of $\Sigma$. Using again the Zaremba principle, one concludes that 
$\partial_n P<0$ on any point of $\Sigma$. So, to conclude the proof, it remains only to relate 
$\partial_n P$ and $\partial_y P$ on~$\Sigma$. To do so, we apply the chain rule to the equation $P(x,h(x))=0$. This gives
$$
(\nabla P)\ah=-(\partial_yP)\ah\nabla h.
$$
Recalling that $n=(1+|\nabla h|^2)^{-1/2}\left(\begin{smallmatrix} -\nabla h\\ 1\end{smallmatrix}\right)$, 
and using the previous identity, one has
$$
(\partial_n P)\ah=\frac{1}{\sqrt{1+|\nabla h|^2}}(\partial_y P-\nabla h\cdot \nabla P)\ah=\sqrt{1+|\nabla h|^2}(\partial_y P)\ah.
$$
This proves that $(\partial_y P)\ah<0$ on $\Sigma$, which means that $B-1=(\partial_y \phi)\ah-1<0$, which is the desired  inequality.
\end{proof}

\subsection{A comparison principle}
Our second application of the Zaremba principle gives a comparison principle for solutions of the Hele-Shaw equation.

\begin{prop}\label{prop:ordering}
Let $h_1,h_2 \in C^\infty([0,T]\times \xT^n)$ be two solutions of 
the Hele-Shaw 
equation $\partial_t h + G(h)h=0$ such that 
$h_1(0,\cdot) \leq h_2(0,\cdot)$. Then 
$$
h_1(t,\cdot) \leq h_2(t,\cdot)
$$
for all $t \in [0,T]$.
\end{prop}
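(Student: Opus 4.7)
The plan is to argue by contradiction via a first-touching-time argument, combined with Zaremba's principle applied to the difference of the two pressures at the contact point. The strict inequality Zaremba provides will directly contradict the (necessarily nonpositive) sign of $\partial_t(h_2-h_1)$ at such a point.

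To force strict inequalities at time zero, I will exploit the vertical translation invariance of the equation: if $h$ is a solution and $c\in\xR$, then $h+c$ is again a solution, because the harmonic extension is simply shifted by $c$ and the Dirichlet-to-Neumann data are preserved. Fix $\eps>0$ and set $h_1^\eps:=h_1-\eps$; it suffices to show $h_1^\eps\le h_2$ on $[0,T]$ and let $\eps\to 0^+$. Since initially $h_1^\eps(0,\cdot)<h_2(0,\cdot)$ strictly, if this fails then by continuity and compactness of $\xT^n$ there is a first contact time $t_*\in(0,T]$ and a point $x_*\in\xT^n$ with $h_1^\eps(t_*,x_*)=h_2(t_*,x_*)$ and $h_1^\eps\le h_2$ for all $t\le t_*$. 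At such a point,
\[
\nabla h_1(t_*,x_*)=\nabla h_2(t_*,x_*)\qquad\text{and}\qquad \partial_t(h_2-h_1^\eps)(t_*,x_*)\le 0.
\]
The remainder of the argument will show instead that $\partial_t(h_2-h_1^\eps)(t_*,x_*)>0$.

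At the frozen time $t=t_*$, let $\Omega_i=\{y<h_i(t_*,x)\}$, let $\phi_i$ be the harmonic extension of $h_i(t_*,\cdot)$ in $\Omega_i$, and introduce the pressures $P_i:=\phi_i-y$. Each $P_i$ is harmonic in $\Omega_i$, vanishes on $\Sigma_i:=\partial\Omega_i$, and tends to $+\infty$ as $y\to-\infty$, so the maximum principle gives $P_i\ge 0$ in $\Omega_i$. Setting $\tilde P_1(x,y):=P_1(x,y+\eps)$, the function $\tilde P_1$ is harmonic in $\tilde\Omega_1:=\{y<h_1^\eps(t_*,x)\}\subset\Omega_2$ and vanishes on $\tilde\Sigma_1:=\partial\tilde\Omega_1$. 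I consider
\[
u:=P_2-\tilde P_1\quad\text{on }\tilde\Omega_1,
\]
which is harmonic, satisfies $u=P_2\ge 0$ on $\tilde\Sigma_1\subset\overline{\Omega_2}$, and tends to $\eps>0$ as $y\to-\infty$. The maximum principle yields $u\ge 0$ in $\tilde\Omega_1$; moreover $u(q_*)=0$ at the contact point $q_*:=(x_*,h_2(t_*,x_*))=(x_*,h_1^\eps(t_*,x_*))\in\tilde\Sigma_1$. Zaremba's principle then gives $\partial_n u(q_*)<0$, where $n$ is the outward unit normal to $\tilde\Omega_1$ at $q_*$, which coincides with the outward unit normal to $\Omega_2$ at $q_*$ because $\nabla h_1(t_*,x_*)=\nabla h_2(t_*,x_*)$.

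To conclude, I translate this back to the equation. Since $\phi_i=P_i+y$ and $\partial_n y|_{\Sigma_i}=1/\sqrt{1+|\nabla h_i|^2}$, one has
\[
G(h_i)h_i=\sqrt{1+|\nabla h_i|^2}\,\partial_n\phi_i\big|_{y=h_i}=\sqrt{1+|\nabla h_i|^2}\,\partial_n P_i\big|_{y=h_i}+1.
\]
Using $|\nabla h_1|=|\nabla h_2|$ at $x_*$ and $\partial_n u(q_*)<0$ yields
\[
G(h_2)h_2(t_*,x_*)-G(h_1)h_1(t_*,x_*)=\sqrt{1+|\nabla h_1(t_*,x_*)|^2}\,\partial_n u(q_*)<0,
\]
so $\partial_t(h_2-h_1^\eps)(t_*,x_*)=G(h_1)h_1(t_*,x_*)-G(h_2)h_2(t_*,x_*)>0$, contradicting the first-contact inequality. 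Letting $\eps\to 0^+$ completes the proof. The main obstacle is to secure the hypotheses of Zaremba's lemma at $q_*$: one needs the touching surfaces $\tilde\Sigma_1$ and $\Sigma_2$ to be tangent at $q_*$ and the domain $\tilde\Omega_1$ to enjoy enough boundary regularity for the boundary-point principle. Both properties are free from the smoothness of $h_1,h_2$ assumed in the statement, together with the first-order matching $\nabla h_1(t_*,x_*)=\nabla h_2(t_*,x_*)$ produced by the first-contact condition.
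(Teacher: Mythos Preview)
Your argument is correct and rests on the same core mechanism as the paper's proof: at a contact point the two free surfaces are tangent, and Zaremba's principle applied to the difference of pressures $P_2-\tilde P_1$ forces a strictly positive sign for $\partial_t(h_2-h_1^\eps)$, contradicting first contact. One small imprecision: as $y\to-\infty$ the difference $u$ tends to $\overline{h_2}-\overline{h_1}+\eps$ (with $\overline{h_i}$ the spatial mean, which is conserved), not to $\eps$ in general. Since $h_1(0,\cdot)\le h_2(0,\cdot)$ gives $\overline{h_1}\le\overline{h_2}$, this limit is still $\ge\eps>0$, so $u\not\equiv0$ and the strong maximum principle plus Zaremba go through exactly as you say.

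The organization, however, differs from the paper's. The paper argues directly on the set $\mathcal{T}=\{t:h_1(t,\cdot)\le h_2(t,\cdot)\}$, showing it is open to the right via a three-case analysis: strict separation, global coincidence (handled by uniqueness of the Cauchy problem), and a nontrivial contact set $C_*$. In the last case Zaremba gives $\partial_t(h_2-h_1)>0$ pointwise on $C_*$, and a further compactness argument is needed to propagate this to an open time interval. Your $\eps$-shift collapses all of this: the strict initial separation makes the first-contact-time framework immediate, rules out the global-coincidence case via conservation of the mean, and replaces the compactness step by a single contradiction at one point. The trade-off is that you use the vertical translation invariance of the problem (infinite depth), which the paper's argument does not require.
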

\begin{proof}
	Define the set 
	$$
	\mathcal{T} := \Big\{ t \in [0,T] \, : \, h_1(t,\cdot) \leq h_2(t,\cdot)\Big\},  
	$$
	so that the statement of Proposition \ref{prop:ordering} reduces to the fact that $\mathcal{T}=[0,T].$ 
	
	We claim that whenever $t_* \in \mathcal{T}$, $\mathcal{T}$ contains an open
	neighborhood of $t_*$ in $[t_*,T].$ Since $0$ belongs to $\mathcal{T}$ by assumption, and since  
	$\mathcal{T}$ is a closed subset of $[0,T]$ by continuity of $h_1$ and $h_2,$ the proof 
	of Proposition \ref{prop:ordering} will follow from the claim. 
	
	For the later, we distinguish three cases. 

        {\it Case i): $h_1(t_*,x) < h_2(t_*,x)$, for all $x \in \mathbb{T}^n.$} This is the easiest
	case: by compactness and continuity it follows that the same inequality holds for all $t$ in
	an open neighborhood of $t_*$ in $[t_*,T]$ (and actually also in $[0,T]$).

	{\it Case ii): $h_1(t_*,\cdot)=h_2(t_*,\cdot).$}  By local well-posedness of
	the Hele-Shaw equation (see Theorem~\ref{T:Cauchy}), it follows that $h_1(t,\cdot)=h_2(t,\cdot)$ for all $t \in [t_*,T],$
	and in particular $[t_*,T] \subseteq \mathcal{T}.$

	{\it Case iii): None of the latter.} In that situation, the set 
	$C_*:=\{x \in \xT^n \, : \, h_1(t_*,x)=h_2(t_*,x)\}$ is a non empty proper 
	subset of $\xT^n.$ Consider an arbitrary element $x_* \in C_*$, and for notational
	convenience set $z_*:=(x_*,h_1(t_*,x_*))=(x_*,h_2(t_*,x_*)).$  
	The function $h_2(t_*,\cdot)-h_1(t_*,\cdot)$ being non negative on $\xT^n$ and
	vanishing at the point $x_*$, we deduce that $Dh := \nabla_x h_1(t_*,x_*) = \nabla_x h_2(t_*,x_*)$ 
	and also that $n:= n_1 = n_2$, where for $i=1,2$ we denoted by $n_i$ the outward unit normal 
	to $\Omega_i$ at the point $z_*$. For $i=1,2,$ let $P_i$ be the unique harmonic function defined in
	$\Omega_i := \{ (x,y) \in \xT^n \times \xR \, : \, y<h_i(t_*,x)\}$ 
	and which satisfies $P_i = 0$ on
	$\partial \Omega_i$ and $P_i+y$ is bounded on $\Omega_i.$ From the maximum
	principle we infer that $P_i$ is positive in $\Omega_i,$ and in particular since $t_*\in
	\mathcal{T}$ and since $C_*$ is proper it follows that $P_2$ is non negative and not identically 
	zero on $\partial \Omega_1.$ A further application of the maximum principle yields that  
	$P_2 > P_1$ on $\Omega_1$, and from Zaremba's principle it then follows that 
	\begin{equation}\label{eq:p1p2zaremba}
		\partial_n \left(P_2-P_1\right)(z_*)<0.
	\end{equation}
	On the other hand, subtracting the Hele-Shaw equations satisfied by $h_2$ and $h_1$ we
	obtain 
	\[
		\partial_t (h_2-h_1)(t_*,x_*) = -\sqrt{1+|Dh|^2} \partial_n
		\left(P_2-P_1\right)(z_*),
	\]
	and therefore by \eqref{eq:p1p2zaremba} this implies   
	\begin{equation}\label{eq:vitesseordo}
	\partial_t h_2(t_*,x_*) > \partial_t h_1(t_*,x_*). 
	\end{equation}
	By compactness of $C_*$, and the fact that $\partial_t h_1$ and $\partial_t h_2$
	are continuous functions, we derive the existence of $\eps>0$ such that 
	$\partial_t h_2(t_*,x) > \partial_t h_1(t_*,x) + \eps$ for all $x$ in some open 
	neighborhood $\mathcal{O}$ of $C_*$ in $\mathbb{T}^n.$  On the other hand, on the compact set
	$\xT^n \setminus \mathcal{O}$, the function $h_2(t_*,\cdot) - h_1(t_*,\cdot)$ is
	positive and therefore bounded from below by some positive constant. By elementary real
	analysis, this also implies that $\mathcal{T}$ contains an open neighborhood of $t_*$ in
	$[t_*,T].$
\end{proof}

\subsection{A convexity inequality for the Dirichlet to Neumann operator}

As explained in the introduction, our third application of the Zaremba  
principle is a convexity inequality which we believe is of independent interest.

\begin{prop}
Let $s\in (1,+\infty)$ and consider two functions $f,h$ in $C^s(\xT^n)$. For any $C^2$ convex function $\Phi\colon \xR\to\xR$, it  holds the pointwise inequality
\be\label{convex}
\Phi'(f)G(h)f\ge G(h)\big(\Phi(f)\big).
\ee
\end{prop}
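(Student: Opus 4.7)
My plan is to introduce
$v \defn \Phi(\phi)-\psi$,
where $\phi$ is the variational solution to $\Deltayx\phi=0$ in $\Omega \defn \{y<h(x)\}$ with trace $\phi\ah=f$, and $\psi$ is the variational solution with trace $\psi\ah=\Phi(f)$. The target pointwise inequality will then be deduced from Zaremba's principle applied to $v$ at the free surface $\{y=h(x)\}$.

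The first step is to observe that $v$ is subharmonic in $\Omega$ and vanishes on the free surface. The chain rule yields $\Deltayx \Phi(\phi)=\Phi''(\phi)|\partialyx\phi|^2\ge 0$ by convexity of $\Phi$, while $\Deltayx\psi=0$, hence $\Deltayx v\ge 0$; and $v\ah=\Phi(f)-\Phi(f)=0$ by construction. Proposition~\ref{P:2.2} gives $\phi,\psi\in C^\infty(\Omega)\cap C^s(\overline\Omega)$ with $s>1$, and since $\Phi$ is $C^2$ the same holds for $\Phi(\phi)$; in particular $v\in C^2(\Omega)\cap C^1(\overline\Omega)$, which is the framework required by the Zaremba statement above.

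The second step, which I expect to be the main obstacle, is to prove that $v\le 0$ throughout $\Omega$. The maximum principle for the harmonic extensions gives $\|\phi\|_{L^\infty}\le \|f\|_{L^\infty}$ and $\|\psi\|_{L^\infty}\le \|\Phi(f)\|_{L^\infty}$, so $v$ is globally bounded, say $|v|\le K$. Because $\Omega$ is unbounded in the $-y$ direction, the classical bounded-domain maximum principle does not apply directly, and I would use a Phragm\'en--Lindel\"of-type barrier: pick $M\ge \sup_x h(x)$ and, for $\eps>0$, set $w_\eps(x,y)\defn v(x,y)+\eps(y-M)$. Since $y-M$ is harmonic, $w_\eps$ remains subharmonic; on the free surface $w_\eps\ah=\eps(h-M)\le 0$; and $w_\eps(x,y)\le K+\eps(y-M)\to -\infty$ uniformly in $x$ as $y\to -\infty$. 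Applying the classical maximum principle on each truncated strip $\Omega_\ell\defn \{-\ell<y<h(x)\}$ for $\ell$ large enough forces $w_\eps\le 0$ on $\Omega_\ell$, hence at every fixed point of $\Omega$; letting $\eps\to 0$ yields $v\le 0$ throughout $\Omega$.

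The last step is to invoke Zaremba. Since $-v$ is superharmonic, belongs to $C^2(\Omega)\cap C^1(\overline\Omega)$, and attains its minimum value $0$ at every point of $\{y=h(x)\}$, Zaremba's principle gives $\partial_n(-v)\ah\le 0$, i.e., $\partial_n v\ah\ge 0$ pointwise (with strict inequality unless $v\equiv 0$, a case in which the target inequality is trivially an equality). The chain rule gives $\partial_n\Phi(\phi)\ah=\Phi'(f)\,\partial_n\phi\ah$; multiplying $\partial_n v\ah\ge 0$ by the positive factor $\sqrt{1+|\partialx h|^2}$ and recalling that $G(h)g=\sqrt{1+|\partialx h|^2}\,\partial_n\tilde g\ah$ for the harmonic extension $\tilde g$ of $g$, this inequality is exactly $\Phi'(f)\,G(h)f\ge G(h)(\Phi(f))$.
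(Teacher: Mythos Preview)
Your proof is correct and follows essentially the same approach as the paper: introduce the auxiliary function $v=\Phi(\phi)-\psi$ (the paper works with $u=-v$), show it is subharmonic with zero boundary trace, apply a maximum principle to get $v\le 0$, and read off the sign of the normal derivative at the boundary. Two minor differences: the paper handles the unbounded domain by invoking its stated maximum principle for functions with $\nabla_{x,y}u\in L^2(\Omega)$ rather than your explicit Phragm\'en--Lindel\"of barrier, and for the final step it does not actually call on Zaremba but uses the elementary observation that $u\ge 0$ in $\Omega$ with $u=0$ on $\partial\Omega$ and $u\in C^1(\overline\Omega)$ already forces $\partial_n u\le 0$.
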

\begin{rema}
We consider only periodic functions but the proof is extremely simple and easy to adapt to other settings.
\end{rema}
\begin{proof}
Denote by $\zeta$ (resp.\ $\xi$) the harmonic extension of 
$f$ (resp.\ $\Phi(f)$), so that
\begin{align*}
&\Delta_{x,y}\zeta=0\quad\text{in }\Omega,\quad \zeta\ah=f,\\
&\Delta_{x,y}\xi=0\quad\text{in }\Omega,\quad \xi\ah=\Phi(f).
\end{align*}
By assumption, $\zeta$ and $\xi$ belong to $C^2(\Omega)\cap C^1(\overline{\Omega})$. 
By definition of the Dirichlet to Neumann operator and using the chain rule, one has
$$
G(h)\big(\Phi(f)\big)-\Phi'(f)G(h)f= \sqrt{1+|\nabla h|^2}\partial_n \big(\xi-\Phi(\zeta)\big).
$$
It suffices then to prove that the difference $u=\xi-\Phi(\zeta)$ satisfies $\partial_n u\le 0$ on 
$\partial\Omega$. To do so, using that $\Phi$ is convex, we observe that
$$
\Delta \Phi(\zeta)=\Phi'(\zeta)\Delta \zeta+\Phi''(\zeta)|\nabla \zeta|^2=\Phi''(\zeta)|\nabla \zeta|^2\ge 0.
$$
Thus, we deduce that
$$
-\Delta_{x,y}u\ge 0\quad\text{in }\Omega,\quad u=0 \quad\text{on }\partial\Omega.
$$
It follows from the maximum principle that $u\ge 0$ in $\Omega$. 
Since $u$ vanishes on 
$\partial\Omega$, we infer that
$$
\forall x\in \partial\Omega,~\forall t>0,\quad u(x-tn)\ge u(x),
$$
where $n$ is the outward unit normal to the boundary. Since $u$ belongs to $C^1(\overline{\Omega})$, this immediately implies that 
$\partial_n u\le 0$, which completes the proof.
\end{proof}

There are several applications that one could work out of this convexity inequality. 
We begin by proving the version of the maximum principle for the Hele-Shaw equation stated in~\S\ref{S:maxintro}.

\begin{prop}
Let $n\ge 1$ and consider an integer $p$ in $\{1\}\cup 2\xN$. 
Assume that $h\in C^\infty([0,T]\times \xT^n)$ is a solution to $\partial_th+G(h)h=0$. 
Then, for all time $t$ in $[0,T]$, there holds
\be\label{n210v2}
\int_{\xT^n} h(t,x)^{2p}\dx
+2\int_0^t\int_{\xT^n}h^p G(h)(h^p)\dx\dt 
\le \int_{\xT^n} h(0,x)^{2p}\dx.
\ee
Consequently, for all time $t$ in $[0,T]$,
\be\label{n211v2}
\sup_{x\in \xT^n}\la h(t,x)\ra\le \sup_{x\in \xT^n}\la h(0,x)\ra.
\ee
\end{prop}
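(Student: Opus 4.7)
The approach is to combine the evolution equation $\partial_t h+G(h)h=0$ with the convexity inequality \e{convex} just proved, applied to the specific choice $\Phi(r)=r^p$. First I would multiply the Hele-Shaw equation pointwise by $2p h^{2p-1}$ and integrate over $\xT^n$. Using $\partial_t(h^{2p})=2p h^{2p-1}\partial_t h$, this produces the basic identity
$$
\fract\int_{\xT^n} h^{2p}\,\dx=-2p\int_{\xT^n} h^{2p-1} G(h)h\,\dx.
$$

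The decisive step is then to bound the right-hand side above by $-2\int h^p G(h)(h^p)\,\dx$. For this I invoke \e{convex} with $\Phi(r)=r^p$. The restriction $p\in\{1\}\cup 2\xN$ appearing in the statement is precisely what guarantees that $\Phi$ is globally $C^2$ convex on $\xR$: indeed $\Phi''(r)=p(p-1)r^{p-2}$ is nonnegative for every $r\in\xR$ exactly when the exponent $p-2$ is even, i.e.\ when $p$ is even (or $p=1$, in which case the inequality collapses to a trivial equality). The convexity inequality then yields the pointwise bound
$$
p h^{p-1} G(h)h\ge G(h)(h^p).
$$
The same parity restriction forces $h^p\ge 0$ pointwise, so multiplying both sides by $h^p$ preserves the inequality and gives $p h^{2p-1} G(h)h\ge h^p G(h)(h^p)$. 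Integrating this over $\xT^n$ and substituting into the identity above leads to
$$
\fract\int_{\xT^n} h^{2p}\,\dx+2\int_{\xT^n} h^p G(h)(h^p)\,\dx\le 0,
$$
and integrating in $t$ on $[0,T]$ yields \e{n210v2}.

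For the $L^\infty$ bound \e{n211v2}, I would use the standard positivity of the Dirichlet-to-Neumann operator, $\int h^p G(h)(h^p)\,\dx\ge 0$ (which follows from its representation through the Dirichlet energy of the harmonic extension), to infer from \e{n210v2} that $\lA h(t,\cdot)\rA_{L^{2p}}\le \lA h(0,\cdot)\rA_{L^{2p}}$. Letting $p\to+\infty$ along $p\in\{1\}\cup 2\xN$ on the compact set $\xT^n$, the $L^{2p}$-norms converge to the $L^\infty$-norm, and \e{n211v2} follows.

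The only subtle point in the argument is the sign bookkeeping at the convexity step: the proof simultaneously requires global convexity of $\Phi(r)=r^p$ on all of $\xR$ and pointwise nonnegativity of $h^p$, and these two requirements together single out exactly the set $p\in\{1\}\cup 2\xN$ for which the statement is formulated. Everything else reduces to the convexity inequality of the preceding proposition and to the standard positivity of $G(h)$.
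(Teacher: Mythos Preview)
Your proof is correct and follows essentially the same route as the paper: differentiate $\int h^{2p}$, apply the convexity inequality \e{convex} with $\Phi(r)=r^p$ to obtain $p h^{p-1}G(h)h\ge G(h)(h^p)$, multiply by $h^p$, and then pass from the $L^{2p}$-decay to the $L^\infty$-bound via positivity of the Dirichlet-to-Neumann form and $p\to\infty$. The only small wrinkle is that your sentence ``the same parity restriction forces $h^p\ge 0$ pointwise'' is literally false for $p=1$; but as you already noted, in that case the convexity inequality is an equality, so multiplication by $h$ preserves it regardless of sign---there is no actual gap.
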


\begin{proof}
Since
$$
\fract\int_{\xT^n} h^{2p}\dx+2p\int_{\xT^n}h^{2p-1}G(h)h\dx=0,
$$
the decay estimate~\e{n210v2} will be proved if we justify that
\be\label{n212}
2p\int_{\xT^n}h^{2p-1}G(h)h\dx\ge 2\int_{\xT^n}h^p G(h)h^p\dx.
\ee
To do so, write $h^{2p-1}G(h)h=h^{p} \big( h^{p-1}G(h)h\big)$ and then 
use the inequality~\e{convex} applied with $\Phi(f)=f^p$ (the function $\Phi$ is convex since $p\in \{1\}\cup 2\xN$): this gives that $p h^{p-1}G(h)h\ge G(h)h^p$. We thus have proved~\e{n212} and hence \e{n210v2}. 
Now, we claim that $\int h^p G(h)(h^p)\dx\ge 0$. Indeed, 
we have $\int \psi G(h)\psi\dx \ge 0$ for any function $\psi$ since 
\be\label{n217}
\int \psi G(h)\psi\dx=\iint_\Omega \la \nabla_{x,y}\varphi\ra^2\dy\dx
\ee
where $\varphi$ is the harmonic extension of $\psi$ (see~\e{defi:varphipsi}). 
This implies that the $L^{2p}$-norm of $h$ decays. 
Then we deduce \e{n211v2}~by arguing that the $L^\infty$-norm of $h$ is the limit of its $L^{2p}$-norms when 
$p$ goes to $+\infty$ (see the end of the proof of Theorem~\ref{T:Bpbis} for details).
\end{proof}

\section{Evolution equations for the derivatives}\label{S:eqBV}
We now consider the evolution equation $\partial_t h+G(h)h=0$. 
We denote by $\phi(t,x,y)$ the unique solution to 
\begin{equation}\label{n401}
\left\{
\begin{aligned}
&\Delta_{x,y}\phi=0\quad\text{in }\{(x,y)\in\xT^n\times\xR\,;\, y<h(t,x)\},\\
&\phi(t,x,h(t,x))=h(t,x),
\end{aligned}
\right.
\end{equation}
and we use the notations
\be\label{n402}
B(t,x)= (\partial_y \phi)(t,x,h(t,x)),\quad 
V(t,x) = (\nabla_x \phi)(t,x,h(t,x)).
\ee
In this section we derive two key evolution equations for $B$ and $V$.

\subsection{Some known identities}\label{S:4}

We begin by recalling some key identities relating $B$, $V$ and~$h$.

\begin{prop}\label{cancellation}

$i)$ 
The functions $B$ and $V$ are given in terms of $h$ by means of the formula 
\begin{equation}\label{defi:BV}
B= \frac{G(h)h+\la \partialx h\ra^2}{1+|\partialx  h|^2},
\qquad
V=(1-B)\partialx h.
\end{equation}

$ii)$ $B$ and $V$ are related by
\be\label{I:G(h)B}
G(h)B=-\cnx V.
\ee

$iii)$ Moreover, for any integer $1\le i\le n$, there holds
\be\label{n3.5}
\partial_i B -G(h)V_i=(\partial_i h)G(h)B+\sum_{j} (\partial_j h)(\partial_i V_j).
\ee
In addition, if $n=1$ then $\partial_x B=G(h)V$.
\end{prop}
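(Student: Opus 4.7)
The entire proposition rests on a single principle: since $\phi$ is harmonic in $\Omega$, each of its partial derivatives $\partial_y\phi$ and $\partial_{x_i}\phi$ is also harmonic, so it coincides with the harmonic extension of its own boundary trace. Consequently, $G(h)B$ and $G(h)V_i$ can be written explicitly in terms of second derivatives of $\phi$ at the surface. All three identities then reduce to careful applications of the chain rule on the boundary condition, combined with $\Delta_{x,y}\phi=0$.

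For (i), I would differentiate the defining relation $\phi(t,x,h(t,x))=h(t,x)$ with respect to $x_i$, which gives
\[
  (\partial_{x_i}\phi)\arrowvert_{y=h} + (\partial_y\phi)\arrowvert_{y=h}\,\partial_i h = \partial_i h,
\]
i.e.\ $V_i = (1-B)\partial_i h$. Plugging this into the definition $G(h)h = B - \nabla h \cdot V$ yields $G(h)h = B - (1-B)|\nabla h|^2$, and solving for $B$ gives the first formula in \eqref{defi:BV}.

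For (ii), the key remark is that $\partial_y\phi$ is the harmonic extension of $B$ (it is harmonic and has trace $B$ on $\Sigma$), so by definition
\[
  G(h)B = \bigl(\partial_y^2\phi - \nabla h \cdot \partial_y\nabla_x\phi\bigr)\arrowvert_{y=h}.
\]
Using $\partial_y^2\phi = -\Delta_x\phi$ and computing $\operatorname{div} V$ directly from $V_j=(\partial_{x_j}\phi)\arrowvert_{y=h}$ via the chain rule,
\[
  \partial_j V_j = (\partial_{x_j}^2\phi)\arrowvert_{y=h} + \partial_j h\cdot(\partial_y\partial_{x_j}\phi)\arrowvert_{y=h},
\]
gives $\operatorname{div} V = \Delta_x\phi\arrowvert_{y=h} + \nabla h\cdot\partial_y\nabla_x\phi\arrowvert_{y=h}$, which exactly matches $-G(h)B$.

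For (iii), I would apply the same strategy. Since $\partial_{x_i}\phi$ is harmonic with trace $V_i$, it is the harmonic extension of $V_i$, so
\[
  G(h)V_i = \bigl(\partial_y\partial_{x_i}\phi - \nabla h\cdot\nabla_x\partial_{x_i}\phi\bigr)\arrowvert_{y=h},
\]
while chain-ruling the definition of $B$ gives $\partial_i B = (\partial_{x_i}\partial_y\phi)\arrowvert_{y=h} + \partial_i h\,(\partial_y^2\phi)\arrowvert_{y=h}$. Subtracting and using Schwarz's theorem produces $\partial_i B - G(h)V_i = \partial_i h\,(\partial_y^2\phi)\arrowvert_{y=h} + \sum_j \partial_j h\,(\partial_{x_i}\partial_{x_j}\phi)\arrowvert_{y=h}$. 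Independently, expanding $\partial_i V_j$ by the chain rule and using the expression for $G(h)B$ from step (ii), one finds that $(\partial_i h)G(h)B + \sum_j (\partial_j h)\partial_i V_j$ equals exactly the same quantity. Finally, the one-dimensional statement $\partial_x B = G(h)V$ is immediate: specializing (iii) to $n=1,i=1$, the right-hand side becomes $(\partial_x h)(G(h)B+\partial_x V)$, which vanishes by (ii).

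The main obstacle is purely bookkeeping: keeping the boundary-trace notation from becoming ambiguous when chain-ruling $\partial_{x_i}$ of a function of the form $f(t,x,h(t,x))$, and consistently using the harmonicity of $\phi$ to replace either $\partial_y^2\phi$ by $-\Delta_x\phi$ or to identify derivatives of $\phi$ as harmonic extensions. No deeper ingredient beyond the chain rule and $\Delta_{x,y}\phi=0$ is needed.
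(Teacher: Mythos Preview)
Your proposal is correct and follows essentially the same approach as the paper: identify $\partial_y\phi$ and $\partial_{x_i}\phi$ as the harmonic extensions of $B$ and $V_i$, then reduce everything to chain-rule computations at the boundary combined with $\Delta_{x,y}\phi=0$. The only cosmetic difference is the bookkeeping in part~(iii): the paper starts from the combination $\partial_i B - \sum_j (\partial_j h)\partial_i V_j$ and recognizes it as $G(h)V_i + (\partial_i h)G(h)B$, whereas you compute $\partial_i B - G(h)V_i$ and the right-hand side separately and match them; the underlying calculation is identical.
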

\begin{proof}
These results are not new. Indeed, these identities play a crucial 
role in the water-wave theory (see~\cite{ABZ3,BLS,LannesJAMS} for \e{defi:BV}--\e{I:G(h)B} and \cite{ABZ3} for \e{n3.5}). 
We recall the proof of this proposition for the sake of completeness. 

In this proof the time variable is seen as a parameter and we skip it.

$i)$ The chain rule implies that
$$
\nabla h=\nabla (\phi(x,h(x)))=(\nabla \phi +(\partial_y \phi)\nabla h)\ah=V+B\nabla h
$$
which implies that $V=(1-B)\nabla h$. On the other hand, by definition of the operator $G(h)$, one has
$$
G(h)h=\big( \partial_y \phi-\nabla h\cdot \nabla \phi\big)\ah=B-V\cdot \nabla h,
$$
so the identity for $B$ in \e{defi:BV} follows from $V=(1-B)\nabla h$.

$ii)$ By definition, one has
\begin{equation*}
B=(\partial_y \phi)\ah.
\end{equation*}
Therefore the function $\Phi$ defined by $\Phi (x,y)=\partial_y \phi (x,y)$  satisfies
$$
\Deltayx \Phi =0,\quad \Phi\arrowvert_{y=h}=B.
$$
Directly from the definition of the Dirichlet to Neumann operator, we  have
$$
G(h)B=
\partial_y \Phi - \partialx h\cdot \partialx \Phi \big\arrowvert_{y=h}.
$$
So it suffices to show that $\partial_y \Phi - \partialx h\cdot \partialx \Phi \big\arrowvert_{y=h}=-\cnx V$. 
To do that we first write that $\partial_y \Phi=\partial_y^2 \phi=-\Delta\phi$ to obtain 
$$
\big(\partial_y \Phi - \partialx h\cdot \partialx \Phi \big)\big\arrowvert_{y=h}=
-\big(\Delta \phi + \partialx h\cdot \partialx \partial_y\phi\big) \big\arrowvert_{y=h},
$$
which implies the desired result by using the chain rule:
$$
\big(\Delta \phi +\partialx h\cdot \partialx \partial_y\phi \big)\big\arrowvert_{y=h}=\cnx (\nabla\phi \big\arrowvert_{y=h}).
$$
This proves statement $ii)$.

$iii)$ Directly from the definitions of~$B$ and~$V$ 
($B=\partial_y\phi\arrowvert_{y=h}$,~$V=\nabla\phi\arrowvert_{y=h}$), and using the chain rule, 
we compute that
\begin{align*}
\partial_i B - \sum_{j=1}^d \partial_i V_j \partial_jh
 &= \big[\partial_i \partial_y \phi +\partial_i h \partial_y^2\phi \big] \big\arrowvert_{y=h} 
 -\sum_{j=1}^d 
 \partial_j h \big[\partial_i \partial_j \phi 
 +\partial_i h \partial_j\partial_y\phi \big] \big\arrowvert_{y=h} \\
&= \big[\partial_y \partial_i  \phi 
- \sum_{j=1}^d \partial_j h \partial_i \partial_j \phi\big] \big\arrowvert_{y=h} 
+\partial_i h \big[ \partial_y^2 \phi -\sum_{j=1}^d 
\partial_j h \partial_j\partial_y\phi \big] \big\arrowvert_{y=h} .
\end{align*}
Let $1\le i\le n$. Notice that $\theta_i=\partial_i \phi$ solves 
$$
\Delta_{x,y}\theta_i =0 \text{ in }\Omega, \quad \theta_i \arrowvert_{y=h} = V_i.
$$
Then
$$
G(h)V_i = (\partial_y\theta_i - \nabla h \cdot \nabla\theta_i)\arrowvert_{y=h}.
$$
Similarly, as already seen, one has
$$
\big[ \partial_y^2 \phi -\sum_{j=1}^d 
\partial_j h \partial_j\partial_y\phi \big] \big\arrowvert_{y=h} =G(h)B.
$$
We thus have proved that
$$
\partial_i B - \sum_{j=1}^d \partial_i V_j \partial_jh=G(h)V_i+(\partial_i h)G(h)B,
$$
which completes the proof of \e{n3.5}. Now notice that, if $n=1$, 
then \e{n3.5} reduces to
$$
\partial_x B -G(h)V=(\partial_x h)G(h)B+ (\partial_x h)(\partial_x V),
$$
which yields $\partial_x B-G(h)V=0$ since $G(h)B=-\partial_x V$. This 
completes the proof of statement~$iii)$.
\end{proof}

\subsection{Parabolic equations}
We are now in position to derive the parabolic evolution equations for $B$ and $V$. We begin by studying $B$.

\begin{prop}
Assume that $h\in C^\infty([0,T]\times \xT^n)$ satisfies $\partial_t h+G(h)h=0$ and define 
$B,V$ by \e{defi:BV}. Then $B$ and $V$ belong to $C^\infty([0,T]\times \xT^n)$. Moreover, $B$ satisfies
\be\label{defi:gamma}
\partial_t B-V\cdot \nabla B +(1-B)G(h)B=\gamma,
\ee
where
\be\label{ngammadef}
\gamma=\frac{1}{1+|\nabla h|^2}\Big(G(h)\big(B^2+|V|^2\big)-2BG(h)B-2V\cdot G(h)V\Big).
\ee
Moreover, the coefficient $\gamma$ satisfies
\be\label{ngammasign}
\gamma\le 0.
\ee
\end{prop}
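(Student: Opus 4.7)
The statement has three parts, of which the evolution equation is the substantive one. Smoothness of $B=(\partial_y\phi)|_{y=h}$ and $V=(\nabla_x\phi)|_{y=h}$ follows from Proposition~\ref{P:2.2} together with standard elliptic regularity up to the boundary: since $h\in C^\infty([0,T]\times\xT^n)$ the domain $\Omega(t)$ depends smoothly on $t$, so $\phi\in C^\infty(\overline{\Omega(t)})$ smoothly in $t$, and the traces of its first-order derivatives are smooth. The inequality $\gamma\le 0$ is an immediate consequence of Proposition~\ref{P:convexityintro} applied with the convex function $\Phi(r)=r^2$: one gets $2fG(h)f\ge G(h)(f^2)$ pointwise for any smooth $f$; applying this to $f=B$ and to each component $f=V_j$ and summing yields $G(h)(B^2+|V|^2)\le 2BG(h)B+2V\cdot G(h)V$.

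\textbf{Derivation of the equation.} I work with the harmonic extension $\phi$ of $h$ and the auxiliary harmonic function $\Phi:=\partial_y\phi$, whose trace is $\Phi|_{y=h}=B$. Applying the formula \eqref{defi:BV} with $\Phi$ replacing $\phi$ yields $(1+|\nabla h|^2)B^\Phi=G(h)B+\nabla h\cdot\nabla B$ where $B^\Phi:=(\partial_y^2\phi)|_{y=h}$, and the chain rule gives $V^\Phi:=(\nabla_x\partial_y\phi)|_{y=h}=\nabla B-B^\Phi\nabla h$. Differentiating $\phi(t,x,h(t,x))=h(t,x)$ in $t$ and using $\partial_t h=-G(h)h$ yields $(\partial_t\phi)|_{y=h}=-(1-B)G(h)h$; substituting $G(h)h=B-V\cdot\nabla h$ and $V=(1-B)\nabla h$ collapses this trace into
\[
\omega:=(\partial_t\phi)|_{y=h}=B^2+|V|^2-B.
\]
Since $\partial_t\phi$ is the harmonic extension of $\omega$, the same formula as for $B^\Phi$ (now with $\omega$) gives $(\partial_y\partial_t\phi)|_{y=h}=(G(h)\omega+\nabla h\cdot\nabla\omega)/(1+|\nabla h|^2)$. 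Next I differentiate $B=\Phi(t,x,h(t,x))$ by the chain rule to get $\partial_t B=(\partial_y\partial_t\phi)|_{y=h}-B^\Phi G(h)h$ and $V\cdot\nabla B=V\cdot V^\Phi+B^\Phi V\cdot\nabla h$. Subtracting and using $G(h)h+V\cdot\nabla h=B$ produces
\[
\partial_t B-V\cdot\nabla B=(\partial_y\partial_t\phi)|_{y=h}-V\cdot V^\Phi-BB^\Phi.
\]
Adding $(1-B)G(h)B$, multiplying through by $(1+|\nabla h|^2)$, and eliminating all occurrences of $B^\Phi$ and $V^\Phi$ via the formulas above together with the pivotal algebraic identity $(1+|\nabla h|^2)(1-B)=1-G(h)h$ (a direct consequence of \eqref{defi:BV}), the $\nabla h\cdot\nabla B$ terms collapse completely and the linear $G(h)B$ terms reduce to $G(h)B-2G(h)h\cdot G(h)B$. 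Expanding $G(h)\omega$ and $\nabla\omega$ from $\omega=B^2+|V|^2-B$ reduces the target identity to
\[
V\cdot\nabla B-\tfrac12\nabla h\cdot\nabla(|V|^2)+G(h)h\cdot G(h)B=BG(h)B+V\cdot G(h)V,
\]
which in turn follows from identity (iii) of Proposition~\ref{cancellation} dotted with $V$ (after observing that $\sum_{i,j}V_i\partial_jh\partial_iV_j=\tfrac12\nabla h\cdot\nabla(|V|^2)$ thanks to $V=(1-B)\nabla h$) and from $V\cdot\nabla h+G(h)h=B$.

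\textbf{Main obstacle.} The derivation is entirely algebraic but the bookkeeping is delicate: one must simultaneously eliminate the auxiliary quantities $B^\Phi$, $V^\Phi$, $(\partial_y\partial_t\phi)|_{y=h}$ and the cross-terms $\nabla h\cdot\nabla B$, and recognize that the remaining structure matches the specific combination $G(h)(B^2+|V|^2)-2BG(h)B-2V\cdot G(h)V$. The two non-obvious reductions are the identity $(1+|\nabla h|^2)(1-B)=1-G(h)h$, which reconciles $-B^\Phi G(h)h$ with $(1-B)G(h)B$, and identity (iii) of Proposition~\ref{cancellation}, which is precisely what is needed to convert the residual term $V\cdot\nabla B$ into the quadratic traces $BG(h)B$ and $V\cdot G(h)V$ that produce the correct form of $\gamma$. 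Once these are in hand, $\gamma\le 0$ is a one-line consequence of the convexity inequality.
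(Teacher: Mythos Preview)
Your argument is correct and takes a genuinely different route from the paper. The paper differentiates the explicit formula $B=(G(h)h+|\nabla h|^2)/(1+|\nabla h|^2)$ and computes $(\partial_t-V\cdot\nabla)G(h)h$ via the shape derivative formula (Proposition~\ref{P:shape}), obtaining a decomposition $A_1+A_2$ which is then simplified using identity~\eqref{n3.5}. You instead work with the harmonic extensions directly: the key observation that $(\partial_t\phi)|_{y=h}=B^2+|V|^2-B$ allows you to read off $(\partial_y\partial_t\phi)|_{y=h}$ from the Dirichlet--Neumann formula applied to $\omega$, so the term $G(h)(B^2+|V|^2)$ appears immediately rather than after several manipulations. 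Both arguments converge at the same place, namely the use of identity~(iii) of Proposition~\ref{cancellation} dotted with $V$ together with the reductions $(1+|\nabla h|^2)(1-B)=1-G(h)h$ and $V\cdot\nabla h=B-G(h)h$. Your route avoids the shape derivative entirely and gives a cleaner explanation of \emph{why} the specific combination $G(h)(B^2+|V|^2)$ arises in $\gamma$; the paper's route has the advantage that the shape derivative is a tool already set up for other purposes (it is reused in the proof of Proposition~\ref{P4.2} and in \S\ref{S:energyconvex}). One small technical point you might make explicit: you use that $\partial_t\phi$ is the variational harmonic extension of its trace $\omega$, which requires checking that $\nabla_{x,y}\partial_t\phi\in L^2(\Omega)$; this is routine for smooth $h$ but worth a sentence.
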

\begin{proof}
Assuming~\e{ngammadef}, the fact that $\gamma$ is negative follows from the convexity inequality~\e{convex}. Indeed, 
this inequality implies that
$$
G(h)(B^2)\le 2 B G(h)B, \quad G(h)(V_j^2)\le 2V_j G(h)V_j \quad (1\le j\le n),
$$
which implies the desired inequality~\e{ngammasign}.

It remains to obtain the identity~\e{ngammadef}. Since
$$
B=\frac{G(h)h+\la \nabla h\ra^2}{1+\la \nabla h\ra^2},
$$
the fact that $B$ belongs to $C^\infty([0,T]\times \xT^n)$ follows 
from the properties of the Dirichlet to Neumann operator recalled in Section~\ref{S:DN}. To obtain~\e{defi:gamma}, we first notice that, 
for any derivative $\partial\in \{\partial_t,\partial_{1},\ldots,\partial_{n}\}$ where $\partial_j=\partial_{x_j}$, 
one has
$$
\partial B=\frac{1}{1+\la \nabla h\ra^2}\left( \partial \big(G(h)h\big)+\left(1-\frac{G(h)h+\la \nabla h\ra^2}{1+\la \nabla h\ra^2}\right)\partial \la\nabla h\ra^2\right).
$$
This yields
\be\label{nBA1A2}
\partial_t B-V\cdot \nabla B=\frac{1}{1+\la \nabla h\ra^2}\left(A_1+A_2\right),
\ee
where
$$
A_1= (\partial_t -V\cdot \nabla)G(h)h,\quad A_2=2(1-B)\nabla h\cdot \big(\nabla \partial_t h -V\cdot \nabla \nabla h\big).
$$
We begin by computing the term $A_2$. 
To do so, we use
$$
(1-B)\nabla h=V
$$
to write $A_2=2V\cdot \big(\nabla \partial_t h -V\cdot \nabla \nabla h\big)$ and hence
$$
A_2=2V\cdot \nabla( \partial_t h -V\cdot \nabla h)+2\sum_{j,k} V_k(\partial_k V_j)\partial_j h.
$$
Since
$$
\partial_th =-G(h)h=-B+V\cdot \nabla h,
$$
this gives
\be\label{nA2}
A_2=-2V\cdot \nabla B+2\sum_{j,k} V_k(\partial_k V_j)\partial_j h.
\ee

We now move to $A_1$. We shall exploit the shape derivative formula~\e{n:shape}. This formula implies that
$$
\partial_t G(h)h=G(h)\big((1-B)\partial_t h\big)-\cnx (V \partial_{t}h),
$$
and similarly
$$
\partial_{j}G(h)h=G(h)\big((1-B)\partial_{j} h\big)-\cnx (V \partial_{j}h).
$$
Recall also that
$$
(1-B)\partial_{j} h=V_j
$$
and notice that
$$
\sum_{j}V_j \cnx (V \partial_{j}h)=\cnx \big( (V\cdot \nabla h)V\big)-\sum_{j,k}(\partial_k V_j)V_k (\partial_j h).
$$
By combining the previous observations, we get that
\begin{align*}
A_1&=G(h)\big((1-B)\partial_t h\big)-V\cdot  G(h)V\\
&\quad -\cnx((\partial_{t}h-V\cdot \nabla h)V)-\sum_{j,k}(\partial_k V_j)V_k (\partial_j h).
\end{align*}
As already mentioned, one has $\partial_{t}h=-B+V\cdot \nabla h$, so that
$$
\cnx((\partial_{t}h-V\cdot \nabla h)V)=-\cnx (B V)=-B\cnx V-V\cdot \nabla B
$$
and for the same reason, 
\begin{align*}
(1-B)\partial_t h&=(1-B)(-B+V\cdot \nabla h)=-B+B^2+V\cdot \big( (1-B)\nabla h\big)\\
&=-B+B^2+\la V\ra^2,
\end{align*}
where we used $(1-B)\nabla h=V$ in the last identity. Consequently, we deduce that
\begin{align*}
A_1&=-G(h)B+G(h)(B^2+V^2)-V\cdot  G(h)V\\
&\quad +B\cnx V+V\cdot \nabla B-\sum_{j,k}(\partial_k V_j)V_k (\partial_j h).
\end{align*}

By combining this with \e{nA2} and simplifying the result, we have
\begin{align*}
A_1+A_2&=-G(h)B+G(h)(B^2+V^2)-V\cdot  G(h)V\\
&\quad +B\cnx V-V\cdot \nabla B+\sum_{j,k}(\partial_k V_j)V_k (\partial_j h).
\end{align*}

The key point is that one can further simplify this expression by means of Lemma~\ref{cancellation}, which implies that
$$
\sum_{j} (\partial_j h)(\partial_k V_j)=\partial_k B -G(h)V_k-(\partial_k h)G(h)B.
$$
Consequently,
$$
\sum_{j,k}(\partial_k V_j)V_k (\partial_j h)=V\cdot \nabla B-V\cdot G(h)V- (V\cdot \nabla h)G(h)B,
$$
and hence, since $V\cdot \nabla h=(1-B)\la \nabla h\ra^2$, we conclude that
$$
\sum_{j,k}(\partial_k V_j)V_k (\partial_j h)=V\cdot \nabla B-V\cdot G(h)V-(1-B)\la \nabla h\ra^2 G(h)B.
$$
As a result,
\begin{align*}
A_1+A_2&=-G(h)B+G(h)(B^2+V^2)-2V\cdot  G(h)V\\
&\quad +B\cnx V- (1-B)\la \nabla h\ra^2 G(h)B.
\end{align*}
Now we write
$$
G(h)B=(1-B)G(h)B+BG(h)B
$$
to obtain
\begin{align*}
A_1+A_2&=-\big(1+\la \nabla h\ra^2\big)(1-B)G(h)B\\
&\quad +G(h)(B^2+V^2)+B\cnx V-BG(h)B-2V\cdot  G(h)V.
\end{align*}
So the desired formula follows from the identity $\cnx V=-G(h)B$ and \e{nBA1A2}.
\end{proof}

\begin{prop}\label{P4.2}
Assume that $h\in C^\infty([0,T]\times \xT^n)$ satisfies $\partial_t h+G(h)h=0$. Then $V$ belongs to $C^\infty([0,T]\times \xT^n)$ and satisfies
\be\label{n6.3}
\partial_t V-V\cdot \nabla V +(1-B)G(h)V+\frac{\gamma}{1-B} V=0.
\ee
Furthermore, the unknown 
$$
Y=(1-B)\partial_t h
$$
satisfies
\be\label{n6.4}
\partial_t Y-V\cdot \nabla Y +(1-B)G(h)Y+\frac{\gamma}{1-B}Y=0.
\ee
\end{prop}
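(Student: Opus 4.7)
The plan is to treat both identities \eqref{n6.3} and \eqref{n6.4} uniformly. Let $\partial\in\{\partial_t,\partial_{x_1},\ldots,\partial_{x_n}\}$ be a generic first-order derivative and set $W\defn (1-B)\partial h$. I will establish the single equation
\begin{equation*}
\partial_t W - V\cdot\nabla W + (1-B)G(h)W + \frac{\gamma}{1-B}W = 0,
\end{equation*}
whence specializing $\partial=\partial_{x_j}$ gives $W=V_j$ and yields \eqref{n6.3} componentwise, while $\partial=\partial_t$ gives $W=Y$ and yields \eqref{n6.4}. The smoothness of $V$ is immediate from $V=(1-B)\nabla h$ and the smoothness of $B$ already established in the previous proposition.

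First, I would apply $\partial$ to $\partial_t h + G(h)h = 0$. The key technical ingredient is the computation of $\partial(G(h)h)$: since both the surface $h$ defining $G(h)$ and the boundary data $h$ vary with the parameter, the product rule gives $\partial(G(h)h) = (dG(h)\,h)\cdot\partial h + G(h)(\partial h)$. Applying Proposition~\ref{P:shape} with $\psi=h$ and $\zeta=\partial h$, one checks that $\mathfrak{B}=B$ and $\mathfrak{V}=V$, so that $(dG(h)h)\cdot\partial h = -G(h)(B\,\partial h) - \cnx(V\,\partial h)$. Combining this with $G(h)(\partial h)$ and expanding $\cnx(V\,\partial h) = (\cnx V)(\partial h) + V\cdot\nabla(\partial h) = -(G(h)B)(\partial h) + V\cdot\nabla(\partial h)$ via the identity $\cnx V=-G(h)B$ of Proposition~\ref{cancellation}, I obtain
\begin{equation*}
\partial_t(\partial h) - V\cdot\nabla(\partial h) + G(h)\big((1-B)\partial h\big) + (G(h)B)(\partial h) = 0.
\end{equation*}

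Second, I would multiply this equation by $(1-B)$ and transfer the factor across the time and transport derivatives using the elementary product rules $(1-B)\partial_t(\partial h) = \partial_t W + (\partial_t B)(\partial h)$ and $(1-B)V\cdot\nabla(\partial h) = V\cdot\nabla W + (V\cdot\nabla B)(\partial h)$. Regrouping, the coefficients of $\partial h$ collect into the bracket $(\partial h)\big[\partial_t B - V\cdot\nabla B + (1-B)G(h)B\big]$, which equals $\gamma\,\partial h$ by the equation for $B$ proved in the preceding proposition. Substituting $\partial h = W/(1-B)$ in this remainder term delivers exactly the announced parabolic equation.

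The only real obstacle is the bookkeeping in the shape derivative step: the two occurrences of $h$ in $G(h)h$ play distinct roles (surface and boundary data) and must be differentiated separately, with Proposition~\ref{P:shape} specialized at $\psi=h$ to identify $\mathfrak{B}=B$ and $\mathfrak{V}=V$. Once this is done, everything else is algebraic manipulation built on the structural identities $V=(1-B)\nabla h$ and $G(h)B=-\cnx V$.
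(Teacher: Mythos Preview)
Your proof is correct and follows essentially the same approach as the paper: apply the shape derivative formula to $\partial(G(h)h)$, multiply by $(1-B)$, commute this factor across the evolution and transport derivatives, and recognize the bracket $\partial_t B - V\cdot\nabla B + (1-B)G(h)B$ as $\gamma$. The only differences are organizational: you treat the time and space derivatives in a single unified computation (the paper does the spatial case first and then remarks that the same argument works for $\partial_t$), and you expand $\cnx(V\,\partial h)$ before multiplying by $(1-B)$ rather than after, which slightly streamlines the bookkeeping.
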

\begin{proof}
Let $1\le j\le n$ and set $h_j=\partial_j h$. Since
$$
\partial_j \big(G(h)h\big)=G(h)((1-B)h_j)-\cnx (V h_j),
$$
and since $V_j=(1-B)h_j$, we have
$$
\partial_t h_j+G(h)V_j -\cnx (V h_j)=0.
$$
Now we multiply this equation by $(1-B)$ and commute $(1-B)$ with $\partial_t$, to obtain, using again $V_j=(1-B)h_j$
$$
\partial_t V_j +(1-B)G(h)V_j +(\partial_t B)h_j-(1-B)\cnx( V h_j)=0.
$$
Now, write
$$
(1-B)\cnx( V h_j)=(1-B)V \cdot \nabla h_j+(1-B)(\cnx V)h_j
$$
and commute $(1-B)$ with $V\cdot \nabla$ to obtain
$$
(1-B)\cnx( V h_j)=V \cdot \nabla ((1-B)h_j)+\big((V\cdot\nabla B)+(1-B)(\cnx V)\big)h_j.
$$
Consequently, we have
$$
\partial_t V_j -V\cdot \nabla V_j
+(1-B)G(h)V_j +\big(\partial_t B-(V\cdot\nabla B)-(1-B)(\cnx V)\big)h_j=0.
$$
Recall that
$$
\cnx V=-G(h)B
$$
and
$$
\gamma=\partial_t B-(V\cdot\nabla B)+(1-B)G(h)B.
$$
This gives 
$$
\partial_t V_j-V\cdot \nabla V_j +(1-B)G(h)V_j+\gamma h_j=0,
$$
which is the desired result \e{n6.3}. 

The exact same arguments apply when $h_j$ is replaced by $\partial_t h$. 
Indeed, 
$$
\partial_t \big(G(h)h\big)=G(h)((1-B)(\partial_t h))-\cnx (V (\partial_t h)).
$$
Therefore, we obtain \e{n6.4} by repeating the previous computations.
\end{proof}

\subsection{A higher order energy}\label{S:energyconvex}

The aim of this paragraph is to prove Proposition~\ref{P:energyconvex} whose statement is recalled here. 
\begin{prop}
For any regular solution $h$, there holds
\begin{align}
&\mez\fract\int h^2\dx +\int h G(h)h \dx =0,\label{Energy}\\[1ex]
&\fract\int h G(h)h \dx+\int (1-B)(h_t^2+|\nabla h|^2)\dx=0.\label{Energy1/2}
\end{align}
\end{prop}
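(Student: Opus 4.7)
The first identity is the basic $L^2$ energy estimate: multiplying $\partial_t h + G(h)h = 0$ by $h$ and integrating over $\xT^n$, the chain rule gives $\int h\,\partial_t h\,\dx = \mez \fract \int h^2\,\dx$, whence $\mez\fract\int h^2\,\dx + \int hG(h)h\,\dx = 0$.

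For the second identity, my plan is to interpret $\int hG(h)h\,\dx$ as a Dirichlet energy of the harmonic extension $\phi$ of $h$ in $\Omega(t)$ and to differentiate it via Reynolds' transport theorem. By Green's identity together with $\Delta_{x,y}\phi=0$ and $\phi\ah=h$,
\[
\int_{\xT^n} h\,G(h)h\,\dx = \iint_{\Omega(t)} |\nabla_{x,y}\phi|^2\,\dx\,\dy.
\]
Since $\Sigma(t)$ moves with normal velocity $h_t/\sqrt{1+|\nabla h|^2}$, the transport theorem yields
\[
\fract \iint_{\Omega(t)} |\nabla_{x,y}\phi|^2\,\dx\,\dy = 2\iint \nabla_{x,y}\phi\cdot\nabla_{x,y}\phi_t\,\dx\,\dy + \int_{\xT^n} |\nabla_{x,y}\phi|^2\ah\, h_t\,\dx.
\]
The bulk integral reduces by integration by parts and $\Delta_{x,y}\phi=0$ to $2\int G(h)h\cdot\phi_t\ah\,\dx$. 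Differentiating the trace relation $\phi(t,x,h(t,x))=h(t,x)$ yields $\phi_t\ah = (1-B)h_t = a h_t$; combining this with $G(h)h = -h_t$ and $|\nabla_{x,y}\phi|^2\ah = B^2+|V|^2$ produces
\[
\fract \int_{\xT^n} h\,G(h)h\,\dx = -2\int_{\xT^n} a\,h_t^2\,\dx + \int_{\xT^n}(B^2+|V|^2)\,h_t\,\dx.
\]

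It remains to convert this right-hand side into $-\int a(h_t^2+|\nabla h|^2)\,\dx$ by algebraic identities at fixed time. From Proposition~\ref{cancellation}, $V = a\nabla h$ and $G(h)h = B - V\cdot\nabla h = B - a|\nabla h|^2$; combining with $G(h)h = -h_t$ gives the pointwise relation $a|\nabla h|^2 = B + h_t$, so that, using $a+B=1$,
\[
B^2 + |V|^2 = B^2 + a^2|\nabla h|^2 = B^2 + a(B+h_t) = B + a h_t.
\]
Next $\int h_t\,\dx = -\int G(h)h\,\dx = 0$, and the crucial integral identity is
\[
\int_{\xT^n} B\, h_t\,\dx = -\int B\,G(h)h\,\dx = -\int h\,G(h)B\,\dx = \int h\,\cnx V\,\dx = -\int \nabla h\cdot V\,\dx = -\int a|\nabla h|^2\,\dx,
\]
which uses self-adjointness of $G(h)$ together with the non-trivial relation $G(h)B = -\cnx V$ from Proposition~\ref{cancellation}. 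Substituting both into the displayed formula for $\fract \int hG(h)h$ produces exactly the claimed second identity, and the convexity $\fractt\int h^2\ge 0$ then follows from the positivity of $a=1-B$ established in Proposition~\ref{Coro:Zaremba-Taylor}.

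The main obstacle is the algebraic reorganization above: discovering the right pointwise identity $B^2+|V|^2 = B + a h_t$ and the right integral identity $\int B h_t = -\int a|\nabla h|^2$ is not transparent from the raw output of Reynolds' transport theorem, and both rely essentially on the cancellation $G(h)B = -\cnx V$ and the self-adjointness of the Dirichlet-to-Neumann operator, i.e. on the full content of Proposition~\ref{cancellation}. Once these identities are in place the conclusion is immediate.
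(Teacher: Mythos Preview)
Your proof is correct and reaches the same intermediate expression $\fract\int hG(h)h=-\int a h_t^2+\int B h_t$ as the paper does, but by a genuinely different route. The paper differentiates $\int hG(h)h$ via the shape derivative formula $\partial_t G(h)h=G(h)((1-B)h_t)-\cnx(h_tV)$, whereas you rewrite it as the Dirichlet energy $\iint_\Omega|\nabla_{x,y}\phi|^2$ and apply Reynolds' transport. More interestingly, to close the identity $\int B h_t=-\int a|\nabla h|^2$ the paper invokes a Rellich identity $\int(|V|^2-B^2+2BV\cdot\nabla h)\dx=0$ obtained from the divergence-free vector field $(-2\phi_y\nabla_x\phi,|\nabla_x\phi|^2-\phi_y^2)$, while you instead use self-adjointness of $G(h)$ together with $G(h)B=-\cnx V$ from Proposition~\ref{cancellation}. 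These two arguments are equivalent in content (unpacking either one recovers the other), but your version has the advantage of staying entirely on the boundary and reusing an identity already established in the paper rather than introducing a separate bulk computation; the paper's Rellich argument, on the other hand, is self-contained and does not appeal to Proposition~\ref{cancellation}. One minor omission: the Reynolds transport and the integration by parts in the bulk term tacitly use decay of $\nabla_{x,y}\phi$ as $y\to-\infty$, which holds for regular solutions but deserves a word.
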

\begin{proof}
The first identity is the energy identity obtained by multiplying 
the Hele-Shaw equation by $h$. 
To prove the second one, we start from
$$
\partial_t G(h)h=G(h)\big((1-B)h_t\big)-\cnx(h_t V).
$$
Then
$$
\fract\int h G(h)h \dx=\int h_t G(h)h\dx 
+\int h G(h)((1-B)h_t)\dx -\int h \cnx(h_t V)\dx.
$$
Since $G(h)$ is self-adjoint, we have
$$
\int h G(h)((1-B)h_t)\dx  =\int (G(h)h) (1-B)h_t\dx.
$$
On the other hand, 
$$
G(h)h=-h_t,
$$
so
$$
\int h G(h)((1-B)h_t)\dx  =-\int (1-B)h_t^2\dx,
$$
hence,
$$
\fract\int h G(h)h \dx=-\int(1-B)h_t^2\dx 
+\int h_t G(h)h\dx  -\int h \cnx(h_t V)\dx.
$$
Integrating by parts the last term gives
$$
\fract \int hG(h)h\dx =-\int(1-B)h_t^2\dx 
+\int h_t \big(G(h)h +V\cdot\nabla h\big)\dx.
$$
Now, by definition of $G(h)h$, there holds
$$
G(h)h +V\cdot\nabla h=B,\quad h_t=-G(h)h=-B+V\cdot\nabla h
.
$$
As a result,
$$
\int h_t \big(G(h)h +V\cdot\nabla h\big)\dx=\int \big(-B^2+BV\cdot\nabla h\big)\dx.
$$
Next, we claim that we have the following elementary Rellich identity
$$
\int \big(|V|^2-B^2+2BV\cdot\nabla h\big)\dx=0.
$$
To see this, one verifies that $\Delta_{x,y}\phi=0$ implies that
$$
\partial_y(\phi_y^2- |\nabla_x\phi|^2)+2\cnx(\phi_y \nabla_x \phi)=0,
$$
and then applies the divergence theorem on $\Omega$ 
with the vector field 
$$
X=(-2\phi_y\nabla_x\phi, |\nabla_x\phi|^2-\phi_y^2).
$$ 

By combining the above results, we end up with
$$
\int h_t \big(G(h)h +V\cdot\nabla h\big)\dx=-\int \big(|V|^2+BV\cdot\nabla h\big)\dx.
$$
Since $V=(1-B)\nabla h$, this gives
$$
\int h_t \big(G(h)h +V\cdot\nabla h\big)\dx=-\int (1-B)|\nabla h|^2\dx,
$$
which concludes the proof.
\end{proof}

\section{Maximum principle for all the derivatives}\label{S:sharpmax}

We prove a maximum principle for all the spatial and time derivativves by adapting the Stampacchia's multiplier method. 
To do so, we begin by symmetrizing the equation. 

\subsection{Symmetrization of the equation}
\begin{prop}
Assume that $h\in C^\infty([0,T]\times \xT^n)$ satisfies $\partial_t h+G(h)h=0$. Recall the notation $a=1-B$ and introduce the operator $L(h)$ defined by
$$
L(h)f=-V\cdot \nabla f-\mez (\cnx V)f+\sqrt{a}\, G(h)\big(\sqrt{a}f\big).
$$
Set
$$
W=\sqrt{a}\nabla h, \quad Z=\sqrt{a} \partial_t h.
$$
Then
\begin{align}
&\Big(\partial_t +L(h) +\frac{\gamma}{2a} \Big)W=0, \label{n6.7}  \\ 
&\Big(\partial_t +L(h) +\frac{\gamma}{2a} \Big)Z=0,\label{n6.8} \\ 
&\Big(\partial_t +L(h) +\frac{\gamma}{2a} \Big)\sqrt{a}=0. \label{n6.9}
\end{align}
\end{prop}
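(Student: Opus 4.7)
The plan is to first establish the scalar identity \eqref{n6.9} for $\sqrt{a}$ directly from the equation for $B$ obtained in Proposition~\ref{P:4.2}-preceding (i.e.\ \eqref{defi:gamma}), and then to derive \eqref{n6.7} and \eqref{n6.8} in a uniform way by a $\sqrt{a}$-conjugation of the parabolic equations \eqref{n6.3}, \eqref{n6.4} proved in Proposition~\ref{P4.2}. The key observation is that $V_j = a\,\partial_j h = \sqrt{a}\,W_j$ and $Y = a\,\partial_t h = \sqrt{a}\,Z$, so $W$ and $Z$ are obtained by dividing $V$ and $Y$ by $\sqrt{a}$; moreover \eqref{n6.3} and \eqref{n6.4} have one and the same form
\[
  \partial_t U - V\cdot\nabla U + a\,G(h)U + \frac{\gamma}{a}U = 0,\qquad U\in\{V_1,\ldots,V_n,Y\}.
\]
So everything reduces to a single general transfer calculation plus the identity \eqref{n6.9}.

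To prove \eqref{n6.9}, I would start from \eqref{defi:gamma}, rewrite it using $a=1-B$ and the identity $G(h)B=-\cnx V$ from Proposition~\ref{cancellation}(ii). This gives $\partial_t a - V\cdot\nabla a + a\,\cnx V = -\gamma$. Writing $\partial_t a = 2\sqrt{a}\,\partial_t\sqrt{a}$ and $\nabla a = 2\sqrt{a}\,\nabla\sqrt{a}$ and dividing by $2\sqrt{a}$ yields
\[
  \partial_t\sqrt{a} - V\cdot\nabla\sqrt{a} + \tfrac{\sqrt{a}}{2}\cnx V + \frac{\gamma}{2\sqrt{a}} = 0.
\]
To see this is exactly \eqref{n6.9}, compute $L(h)\sqrt{a} = -V\cdot\nabla\sqrt{a} - \tfrac{1}{2}(\cnx V)\sqrt{a} + \sqrt{a}\,G(h)a$, and use once more $G(h)a = -G(h)B = \cnx V$ to collapse the last two terms into $+\tfrac{1}{2}\sqrt{a}\,\cnx V$; adding $\tfrac{\gamma}{2a}\sqrt{a} = \tfrac{\gamma}{2\sqrt{a}}$ reproduces the displayed equation.

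For \eqref{n6.7} and \eqref{n6.8}, I would prove the following one-shot transfer statement: whenever $U$ satisfies the parabolic equation above and $u:=U/\sqrt{a}$, then $(\partial_t + L(h) + \tfrac{\gamma}{2a})u = 0$. Writing $U=\sqrt{a}u$ and expanding each term by Leibniz,
\[
  \sqrt{a}\,\partial_t u - \sqrt{a}\,V\cdot\nabla u + a\,G(h)(\sqrt{a}u) + u\bigl[\partial_t\sqrt{a} - V\cdot\nabla\sqrt{a}\bigr] + \frac{\gamma}{\sqrt{a}}u = 0,
\]
and dividing by $\sqrt{a}$ leaves
\[
  \partial_t u - V\cdot\nabla u + \sqrt{a}\,G(h)(\sqrt{a}u) + \frac{u}{\sqrt{a}}\bigl[\partial_t\sqrt{a} - V\cdot\nabla\sqrt{a}\bigr] + \frac{\gamma}{a}u = 0.
\]
Then substitute the rewritten form of \eqref{n6.9}, namely $\tfrac{1}{\sqrt{a}}(\partial_t\sqrt{a} - V\cdot\nabla\sqrt{a}) = -\tfrac{1}{2}\cnx V - \tfrac{\gamma}{2a}$, to cancel the unwanted contributions. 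One obtains exactly $\partial_t u - V\cdot\nabla u - \tfrac{1}{2}(\cnx V)u + \sqrt{a}\,G(h)(\sqrt{a}u) + \tfrac{\gamma}{2a}u = 0$, which is $(\partial_t + L(h) + \tfrac{\gamma}{2a})u = 0$. Applying this with $U=V_j$ gives \eqref{n6.7} componentwise, and with $U=Y$ gives \eqref{n6.8}.

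There is no real obstacle here; the whole argument is algebraic. The only thing to be careful about is sign bookkeeping in the Leibniz expansion and in the repeated use of $G(h)B = -\cnx V$, which is the single structural identity that makes \eqref{n6.9} available and allows the $\sqrt{a}$-conjugation to absorb the first-order correction into the skew-symmetric piece $-V\cdot\nabla - \tfrac{1}{2}\cnx V$ of $L(h)$.
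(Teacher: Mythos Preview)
Your proof is correct and follows essentially the same approach as the paper: both rely on the identity $G(h)B=-\cnx V$ together with the evolution equation for $B$ to handle the $\sqrt{a}$-conjugation. The only difference is organizational---you establish \eqref{n6.9} first and then use it as a lemma in a single transfer argument to pass from \eqref{n6.3}--\eqref{n6.4} to \eqref{n6.7}--\eqref{n6.8}, whereas the paper derives \eqref{n6.7} directly from \eqref{n6.3} (using $\partial_t B - V\cdot\nabla B = \gamma + (1-B)\cnx V$, which is your rewritten \eqref{n6.9}) and then says the same computation yields \eqref{n6.8} and \eqref{n6.9}.
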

\begin{rema}
Compared to the equation~\e{n6.3} for $V=a\nabla h$, the two improvements are that
$$
f\mapsto \sqrt{a}\, G(h)\big(\sqrt{a}f\big) \quad\text{ is self-adjoint}, 
$$
and
$$
f\mapsto -V\cdot \nabla f-\mez (\cnx V)f \quad \text{is skew-symmetric}.
$$
This is used later on to perform $L^2$-energy estimates.
\end{rema}
\begin{proof}
Since $V=\sqrt{1-B} W$, one has
\begin{align*}
\partial_t V-V\cdot \nabla V +\frac{\gamma}{1-B} V
&=\sqrt{1-B}\Big(\partial_t -V\cdot \nabla 
+\frac{\gamma}{1-B}\Big) W\\
&\quad -\frac{W}{2\sqrt{1-B}}\Big(\partial_t B  -V\cdot \nabla B \Big).
\end{align*}
Now, it follows from the equation~\e{defi:gamma} and the identity 
$G(h)B=-\cnx V$ (see~\e{I:G(h)B}) that
$$
\partial_t B-V\cdot \nabla B=\gamma+(1-B)\cnx V.
$$
As a result, it follows from the previous computations and the equation~\e{n6.3} for $V$ that
$$
\partial_t W-V\cdot \nabla W 
+\frac{\gamma}{1-B} W=-\sqrt{1-B}G(h)V+\frac{W}{2(1-B)}\Big(\gamma+(1-B)\cnx V\Big).
$$
We immediately obtain identity~\e{n6.7} for $W$ by simplifying this equation. One obtains the equation~\e{n6.8} (resp.\ \e{n6.9}) 
for~$Z$ (resp.\ $\sqrt{a}$) 
by repeating the same arguments starting from the equation~\e{n6.4} (resp.\ \e{defi:gamma}) for $Y$ (resp.\ $B$).
\end{proof}

\subsection{Application of the Stampacchia multiplier method}

We now prove Theorem~\ref{NeatMax} whose statement is recalled here.
\begin{theo}
Let $n\ge 1$. Consider a positive number $M>0$ and 
$h\in C^\infty([0,T]\times \xT^n)$ 
solution to $\partial_th+G(h)h=0$. 
Then for any derivative
$$
D\in \{\partial_t,\partial_{x_1},\ldots,\partial_{x_n}\},
$$
if, initially, $\sup_{x\in \xT^n} D h(0,x)\le M$, then, for all time $t$ in $[0,T]$,
\be\label{maxDh}
\sup_{x\in \xT^n} Dh(t,x)\le M.
\ee
\end{theo}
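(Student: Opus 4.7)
The plan is to perform a Stampacchia-type multiplier argument on the symmetrized equations~\e{n6.7}--\e{n6.9}. For any derivative $D\in\{\partial_t,\partial_{x_1},\ldots,\partial_{x_n}\}$, I would introduce
$$
u:=\sqrt{a}\,(Dh-M),
$$
which equals $W_j-M\sqrt{a}$ when $D=\partial_{x_j}$ and $Z-M\sqrt{a}$ when $D=\partial_t$. By~\e{n6.7}--\e{n6.9}, both $\sqrt{a}\,Dh$ and $\sqrt{a}$ solve the linear equation $\partial_t v + L(h)v + (\gamma/(2a))v = 0$, so $u$ satisfies it as well. Since $a>0$ by Proposition~\ref{Coro:Zaremba-Taylor}, the hypothesis forces $u(0,\cdot)\le 0$, and proving~\e{maxDh} reduces to showing that $u_+:=\max(u,0)$ remains identically zero on $[0,T]\times\xT^n$.

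To propagate the sign, I would multiply the equation by $u_+$ and integrate over $\xT^n$. The structure of $L(h)$ is engineered for this: the drift piece $-V\cdot\nabla-\mez(\cnx V)$, being skew-symmetric, contributes zero after integration by parts (using $u_+\nabla u = \mez\nabla(u_+^2)$ a.e.); the time derivative yields $\mez\fract\int u_+^2\dx$; and the principal part rewrites, with $w:=\sqrt{a}\,u$ so that $\sqrt{a}\,u_+ = w_+$, as
$$
\int_{\xT^n} u_+\sqrt{a}\,G(h)(\sqrt{a}\,u)\dx = \int_{\xT^n} w_+\,G(h)w\dx.
$$
The key claim, addressed below, is that this last integral is nonnegative. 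Granting it, the resulting identity becomes the differential inequality
$$
\mez\fract\int_{\xT^n} u_+^2\dx \le -\int_{\xT^n}\frac{\gamma}{2a}u_+^2\dx.
$$

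Since $\gamma\le 0$ by Proposition~\ref{P:convexityintro} and $a>0$, and since $h$ is smooth on $(0,T]\times\xT^n$ by Theorem~\ref{T:Cauchy}, the function $|\gamma|/(2a)$ is uniformly bounded on $[\tau,T]\times\xT^n$ for every $\tau>0$. Gr\"onwall's inequality on $[\tau,T]$, together with $\|u_+(\tau)\|_{L^2(\xT^n)}\to 0$ as $\tau\downarrow 0$ (by continuity in time of $u$ down to $t=0$ and the initial vanishing $u_+(0,\cdot)\equiv 0$), then yields $u_+\equiv 0$ on $[0,T]\times\xT^n$, which is~\e{maxDh}.

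The delicate step is the positivity $\int_{\xT^n} w_+\,G(h)w\dx\ge 0$. The convexity inequality~\e{convex} is close in spirit, but the natural choice $\Phi(r)=r_+^2/2$ is only $C^1$ and cannot be applied directly. I would instead argue from the variational representation of $G(h)$: if $\varphi$ denotes the harmonic extension of $w$ in $\Omega(t)$, then $\varphi_+$ is Lipschitz with $\nabla_{x,y}\varphi_+ = \chi_{\{\varphi>0\}}\nabla_{x,y}\varphi$ a.e., and Green's identity combined with $\Delta_{x,y}\varphi = 0$ gives
$$
\int_{\xT^n} w_+\,G(h)w\dx = \iint_{\Omega}\nabla_{x,y}\varphi_+\cdot\nabla_{x,y}\varphi\dy\dx = \iint_{\{\varphi>0\}}|\nabla_{x,y}\varphi|^2\dy\dx \ge 0.
$$
Alternatively one can approximate $r_+^2/2$ by a family of $C^2$ convex functions $\Phi_\eps$, apply Proposition~\ref{P:convexityintro} to each, integrate using the mean-zero identity $\int_{\xT^n} G(h)g\dx = 0$, and pass to the limit.
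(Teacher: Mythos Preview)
Your argument is correct and follows essentially the same route as the paper: set $u=\sqrt{a}(Dh-M)$, use linearity of~\e{n6.7}--\e{n6.9} to see that $u$ solves the symmetrized equation, multiply by $u_+$, exploit the skew-symmetry of the drift and the positivity of $\int w_+G(h)w\dx$, and close by Gr\"onwall. Two minor remarks: (i) the theorem already assumes $h\in C^\infty([0,T]\times\xT^n)$, so $\gamma/(2a)$ is bounded on the whole time interval and the detour through $[\tau,T]$ with $\tau\downarrow 0$ is unnecessary; (ii) for the key positivity, the paper simply invokes the convexity inequality~\e{convex} with $\Phi_+(r)=r_+^2/2$ (a $W^{2,\infty}$ convex function) and the mean-zero property of $G(h)$, which amounts to your second alternative by approximation---your direct variational argument via the harmonic extension is an equally valid (and arguably cleaner) justification.
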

\begin{proof}
Consider a derivative 
$D$ in $\{\partial_t,\partial_{x_1},\ldots,\partial_{x_n}\}$ and set 
$U=\sqrt{a}Dh$.
It follows from \e{n6.7},~\e{n6.8} and \e{n6.9} that
\be\label{n6.19}
\Big(\partial_t +L(h) +\frac{\gamma}{2a} \Big)(U-M\sqrt{a})=0.
\ee
To obtain the bound~\e{maxDh}, 
we shall use Stampacchia's method. Introduce
$$
(U-M\sqrt{a})_+=\max\{ U-M\sqrt{a},0\}.
$$
The idea is that, since 
$$
U-M\sqrt{a}=\sqrt{a}(Dh-M),
$$
and since $\sqrt{a}>0$, to prove that $Dh\le M$ it is equivalent to prove that $(U-M\sqrt{a})_+=0$. 
To prove the latter result, we shall multiply the equation~\e{n6.19} 
by $(U-M\sqrt{a})_+$ and perform an $L^2$-energy estimate. 
To do so, we use the three following properties: $i)$ one has
$$
\int (U-M\sqrt{a})_+\partial_t (U-M\sqrt{a})\dx
=\mez\fract \int (U-M\sqrt{a})_+^2\dx,
$$
$ii)$ with $\mathcal{U}=\sqrt{a}(U-M\sqrt{a})=a(Dh-M)$ we have
$$
\int (U-M\sqrt{a})_+ L(h)(U-M\sqrt{a})\dx=\int \mathcal{U}_+G(h)\mathcal{U}\dx,
$$
and thirdly, the convexity inequality~\e{convex} implies that
$$
\int \mathcal{U}_+G(h)\mathcal{U}\dx\ge \int G(h)\Phi_+(\mathcal{U})\dx
$$
where $\Phi_+$ is the $W^{2,\infty}$ convex function whose derivative is $\Phi'_+(r)=\max\{0,r\}$. 
Since $\int G(h)\Phi_+(\mathcal{U})\dx=0$, as already seen, this proves that
$$
\int (U-M\sqrt{a})_+ L(h)(U-M\sqrt{a})\dx\ge 0.
$$
As a consequence, we deduce that
$$
\mez\fract \int (U-M\sqrt{a})_+^2\dx
+\int \frac{\gamma}{1-B}(U-M\sqrt{a})_+^2\dx\le 0.
$$
Consequently,
$$
y(t)=\int (U-M\sqrt{a})_+(t,x)^2\dx
$$
satisfies
$$
\dot{y}(t)\le C(t)y(t),
$$
with
$$
C(t)=\lA \frac{\gamma}{1-B}(t,\cdot) \rA_{L^\infty(\xT)}.
$$
Since $y(0)=0$ by assumption, the Gronwall's lemma implies that $y(t)=0$ for all time $t$, which terminates the proof.
\end{proof}

\section{Decay of the inverse of the Rayleigh--Taylor coefficient}\label{S:Lp1}

In this section we prove Theorem~\ref{T:Bp} whose statement is recalled here.

\begin{theo}\label{T:Bpbis}
Let $n\ge 1$ and consider an integer $p\in \xN$. 
Consider a regular solution $h$ of $\partial_th+G(h)h=0$ defined on $[0,T]$ 
and set $a=1-B$ where 
$B$ is as defined by~\e{n401}--\e{n402}.

$i)$ For all time $t$ in $[0,T]$, there holds
$$
\fract \int_{\xT^n} \frac{\dx}{a(t,x)^p}\le 0.
$$

$ii)$ For any positive constant $c$, if initially
$$
\forall x\in\xT^n,\qquad a(0,x)\ge c>0,
$$
then
\be\label{n151}
a(t,x)\ge c,
\ee
for all $(t,x)$ in $[0,T]\times\xT^n$.
\end{theo}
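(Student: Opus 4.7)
The plan is to implement the strategy sketched in Subsection~\ref{S:convexintro}: derive an evolution equation for $\zeta\defn 1/a$, test it against $\zeta^{p-1}$, and exploit two cancellations produced by the identity $G(h)a = \cnx V$ from Proposition~\ref{cancellation}. To begin, I translate the equation for $B$ in Proposition~\ref{Prop:eqBV} into one for $a=1-B$. Using $G(h)\mathbf{1}=0$, this gives
$$
\partial_t a - V\cdot\nabla a + a\,G(h) a = -\gamma.
$$
Since $a>0$ by Proposition~\ref{Coro:Zaremba-Taylor}, the function $\zeta=1/a$ is smooth, and the chain rule combined with multiplication by $p\zeta^{p-1}$ yields
$$
\partial_t \zeta^p - V\cdot\nabla \zeta^p - p\,\zeta^p\, G(h)a - p\gamma\,\zeta^{p+1} = 0.
$$

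Next, I integrate over $\xT^n$. Integration by parts turns $-\int V\cdot\nabla\zeta^p\dx$ into $\int (\cnx V)\zeta^p\dx$; substituting $\cnx V = G(h)a$ produces the clean identity
$$
\fract\int_{\xT^n}\zeta^p\dx \;=\; (p-1)\int_{\xT^n}\zeta^p\, G(h)a\dx \;+\; p\int_{\xT^n}\gamma\,\zeta^{p+1}\dx.
$$
Both right-hand terms are nonpositive. The second is immediate from $\gamma\le 0$ (recorded after Proposition~\ref{Prop:eqBV}). The first, relevant only when $p>1$, follows from Proposition~\ref{P:convexityintro} applied to the convex function $\Phi(r) = r^{1-p}/(p-1)$ on $(0,\infty)$, whose derivative is $\Phi'(r) = -r^{-p}$: this gives the pointwise bound $-\zeta^p\, G(h)a \ge G(h)\Phi(a)$, and since the integral of $G(h)(\cdot)$ over $\xT^n$ vanishes (the harmonic extension has zero total normal flux through $\Sigma$), integration yields $\int\zeta^p\,G(h)a\dx \le 0$. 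This proves part $i)$; the borderline case $p=1$ is already covered since the factor $(p-1)$ kills the first term.

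Finally, for part $ii)$, assume $a(0,\cdot)\ge c>0$, so $\zeta(0,\cdot)\le 1/c$ and $\|\zeta(0,\cdot)\|_{L^p}\le |\xT^n|^{1/p}/c$. Part $i)$ propagates the bound to all $t\in[0,T]$, and letting $p\to\infty$ on a finite-measure space gives $\|\zeta(t,\cdot)\|_{L^\infty}\le 1/c$, equivalent to $a(t,\cdot)\ge c$. The only delicate point of the argument lies in the algebraic balance of the second paragraph: neither $V\cdot\nabla\zeta^p$ nor $\zeta^p\, G(h)a$ is individually signed, yet the identity $G(h)a = \cnx V$ fuses them with exactly the coefficient $(p-1)$ matching the derivative of the convex function $r^{1-p}/(p-1)$. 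This is the pivot of the entire proof; everything else — smoothness of $\zeta$, vanishing of $\int G(h)(\cdot)\dx$, and the limit $p\to\infty$ — is routine.
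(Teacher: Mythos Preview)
Your proof is correct and follows essentially the same route as the paper: derive the evolution equation for $\zeta=1/a$, integrate, use $G(h)a=\cnx V$ to combine the transport and Dirichlet--Neumann terms into $(1-p)\int\zeta^p G(h)a\,\dx$, and conclude via the convexity inequality and $\gamma\le 0$. The only cosmetic differences are that the paper explicitly notes one must take a $C^\infty$ convex extension of $r\mapsto r^{1-p}$ from the compact range $[m,M]\subset(0,\infty)$ of $a$ to all of $\xR$ in order to invoke Proposition~\ref{P:convexityintro} as stated, and for part $ii)$ the paper uses a direct measure argument rather than the $L^p\to L^\infty$ limit, but both are standard and equivalent here.
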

\begin{proof}
$i)$ Recall that
$$
\partial_t B-V\cdot \nabla B+(1-B)G(h)B=\gamma,
$$
so $a=1-B$ solves
$$
\partial_t a-V\cdot \nabla a +a G(h)a+\gamma=0.
$$
We have seen in Proposition~\ref{Coro:Zaremba-Taylor} that~$a(t,x)$ is positive 
for all $(t,x)$ in $[0,T]\times \xT^n$. 
Set
\be\label{defi:mMa}
m\defn \inf_{[0,T]\times\xT^n}a(t,x),\quad M\defn \sup_{[0,T]\times\xT^n}a(t,x),
\ee
and
$$
\zeta=\frac{1}{a}.
$$
Since $B$ is smooth, the function $\zeta$ is smooth 
and one verifies that
\be\label{eqzeta}
\partial_t\zeta-V\cdot \nabla \zeta=\zeta G(h)a+\gamma \zeta^2.
\ee
Our goal is to prove that, for all $p\ge 1$,
\be\label{nBpgoal}
\fract \int \zeta^p\dx \le 0.
\ee
To do so, we multiply the equation~\e{eqzeta} by $p\zeta^{p-1}$ to obtain
$$
\partial_t\zeta^p-V\cdot \nabla \zeta^p-p \zeta^{p} G(h)a-p\gamma \zeta^{p+1}=0.
$$
Then we integrate over $\xT^n$ and integrate by parts in the term $\int V\cdot \nabla \zeta^p\dx$. This gives that
$$
\fract \int \zeta^p\dx +\int (\cnx V)\zeta^p \dx -p \int \zeta^p G(h)a\dx -p\int \gamma \zeta^{p+1}\dx=0.
$$
Since $\zeta>0$ and since $\gamma\le 0$, one has
\be\label{nBpn1}
-p\int \gamma \zeta^{p+1}\dx\ge 0.
\ee
Let us prove that
\be\label{nBpgamma}
\int (\cnx V)\zeta^p \dx -p \int \zeta^p G(h)a\dx\ge 0.
\ee
By combining this inequality with~\e{nBpn1}, this will imply the 
desired result~\e{nBpgoal}. To prove \e{nBpgamma}, again we use the identity $G(h)B=-\cnx V$. 
This implies that $G(h)a=\cnx V$ and hence
$$
\int (\cnx V)\zeta^p \dx -p \int \zeta^p G(h)a\dx=(1-p)\int  \zeta^p G(h)a  \dx.
$$
If $p=1$ then the term in the right-hand side vanishes and the proof is complete. 
Otherwise $p>1$ and we can find a $C^\infty$ convex function $\Phi\colon\xR\to\xR$ such that
$$
\forall r\in [m,M],\qquad \Phi(r)=r^{-p+1},
$$
where $m$ and $M$ are given by \e{defi:mMa}. 
We are now in position to apply the convexity inequality~\e{convex}. 
This gives that
\begin{align*}
-\zeta^p G(h)a&=-a^{-p}G(h)a=\frac{1}{p-1}\Phi'(a)G(h)a\\
&\ge \frac{1}{p-1}G(h)(\Phi(a))= \frac{1}{p-1}G(h)(a^{-p+1}).
\end{align*}
Therefore, one can write that
$$
\int  -\zeta^p G(h)a  \dx\ge \frac{1}{p-1}\int G(h)(a^{-p+1})\dx=0,
$$
where we used the fact that $\int G(h)\psi\dx=0$ for any function $\psi$, 
which in turn follows from the divergence theorem:
\be\label{n140}
\int_{\xT^n} G(h)\psi\dx=\int_{\partial\Omega} \partial_n \varphi\dsigma=\iint_\Omega \Delta \varphi \dy\dx=0,
\ee
where we used the notations in~\e{defi:varphipsi}. This proves that
$$
\fract \int_{\xT^n} \frac{\dx}{a(t,x)^p} \le 0.
$$

$ii)$ Now, let us assume that initially $a(0,x)\ge c$ for some~$c>0$. 
Then, for any $p\ge 1$, one has
$$
\int_{\xT^n} \frac{\dx}{a(t,x)^p}\le\int_{\xT^n} \frac{\dx}{a(0,x)^p} 
\le \frac{\la \xT^n\ra}{c^p}. 
$$
Given $0<\delta<1$, introduce the set 
$A=\{ x\in \xT^n\,:\, a(t,x)< \delta c\}$ and denote its measure by $\la A\ra$. Then
$$
\int_{\xT^n} \frac{\dx}{a(t,x)^p}\ge \int_{A} \frac{\dx}{a(t,x)^p}\ge \int_{A} \frac{\dx}{(\delta c)^p}=\frac{\la A\ra}{(\delta c)^p}.
$$
By combining the two inequalities we get 
$\la A\ra\le \delta^p\la \xT^n\ra$ for any $p\ge 1$. 
Since $\delta<1$, this proves that $\la A\ra=0$ and hence $A=\emptyset$ since $A$ is open. 
This implies that $a(t,x)\ge c$ for all $(t,x)$ in $[0,T]\times \xT^n$, which completes the proof.
\end{proof}

\section{Decay estimate for the slope}\label{S:Lp2}

In this section, we prove Proposition~\ref{T:Vp}. 
To do so, we shall exploit the following conservation law which holds in any space dimension.
\begin{prop}\label{P4.5}
Assume that $h\in C^\infty([0,T]\times \xT^n)$ satisfies $\partial_t h+G(h)h=0$. 
Then, for any integer $p\in \xN\setminus\{0\}$, there holds  
\be\label{nV2p}
\fract \int \frac{\la V\ra^{2p}}{1-B}\dx
+ \int \bigg(2p \la V\ra^{2p-2}V\cdot G(h)V+(2p-1)\frac{\gamma  \la V\ra^{2p}}{(1-B)^2}\bigg)\dx=0.
\ee
\end{prop}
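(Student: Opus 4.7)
The plan is to derive the conservation law by a direct computation, starting from the evolution equations already at hand for $V$ and for $a=1-B$, combining them to get an equation for $|V|^{2p}/a$, and then integrating, with the divergence term being absorbed via integration by parts using the identity $\cnx V = G(h)a$.

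First, from Proposition~\ref{P4.2} we have $\partial_t V - V\cdot\nabla V + a\, G(h)V + (\gamma/a)V = 0$, so that taking the scalar product with $2V$ yields
\[
  \partial_t |V|^2 - V\cdot\nabla|V|^2 = -2a\, V\cdot G(h)V - \frac{2\gamma}{a}|V|^2.
\]
Raising this to the $p$-th power of $|V|^2$ via the chain rule gives
\[
  \partial_t |V|^{2p} - V\cdot\nabla|V|^{2p} = -2p a\, |V|^{2p-2}V\cdot G(h)V - \frac{2p\gamma}{a}|V|^{2p}.
\]
On the other hand, the equation~\e{defi:gamma} for $B$ (rewritten in terms of $a=1-B$) reads $\partial_t a - V\cdot\nabla a = -a\, G(h)a - \gamma$.

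Next, I would combine these two transport-type identities via the quotient rule. Writing $\partial_t(|V|^{2p}/a) - V\cdot\nabla(|V|^{2p}/a) = a^{-1}(\partial_t|V|^{2p} - V\cdot\nabla|V|^{2p}) - a^{-2}|V|^{2p}(\partial_t a - V\cdot\nabla a)$ and substituting the two expressions above, after straightforward simplification the coefficient of $\gamma/a^2$ becomes $-(2p-1)$, and one obtains
\[
  \partial_t \frac{|V|^{2p}}{a} - V\cdot\nabla \frac{|V|^{2p}}{a} = -2p\, |V|^{2p-2}V\cdot G(h)V + \frac{|V|^{2p}}{a}G(h)a - (2p-1)\frac{\gamma\,|V|^{2p}}{a^2}.
\]

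Finally, I would integrate over $\xT^n$. The transport term is integrated by parts as $-\int V\cdot\nabla(|V|^{2p}/a)\dx = \int (\cnx V)(|V|^{2p}/a)\dx$. The key cancellation comes from the identity~\e{I:G(h)B}, namely $\cnx V = -G(h)B = G(h)a$ (since $G(h)$ annihilates constants), so that the resulting contribution $\int (G(h)a)(|V|^{2p}/a)\dx$ exactly cancels the term $\int (|V|^{2p}/a)G(h)a\dx$ on the right-hand side. What remains is precisely
\[
  \fract \int \frac{|V|^{2p}}{1-B}\dx + 2p\int |V|^{2p-2}V\cdot G(h)V\dx + (2p-1)\int \frac{\gamma\,|V|^{2p}}{(1-B)^2}\dx = 0,
\]
as claimed.

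There is no genuine obstacle here: the proof is an orchestrated computation, and the only nontrivial ingredient beyond the equations of Proposition~\ref{P4.2} and of $B$ is the geometric identity $\cnx V = G(h)a$ from Proposition~\ref{cancellation}, which is what makes the two $G(h)a$ contributions annihilate each other. The mild care to be taken concerns the bookkeeping of the $\gamma$-coefficient: the contribution $-2p\gamma/a^2$ coming from the $V$ equation and the contribution $+\gamma/a^2$ coming from the $a$ equation add up to $-(2p-1)\gamma/a^2$, matching the stated conservation law.
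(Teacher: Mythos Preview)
Your proof is correct and follows essentially the same route as the paper: both combine the evolution equations for $V$ and for $a=1-B$ via the quotient/product rule, and both rely on the identity $\cnx V=-G(h)B=G(h)a$ to make the transport term and the $G(h)a$ term cancel. The only cosmetic difference is that the paper packages the computation into a pointwise conservation law with an explicit divergence $\cnx\big(|V|^{2p}V/(1-B)\big)$ before integrating, whereas you perform the cancellation after integration by parts; the content is identical.
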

\begin{proof}
It follows from the chain rule that
\begin{align*}
&\frac{\la V\ra^{2p-2}V}{1-B}\cdot \big(\partial_t V-V\cdot \nabla V\big)\\
&\qquad\qquad=\frac{1}{2p(1-B)}\big(\partial_t -V\cdot \nabla\big)\la V\ra^{2p}\\
&\qquad\qquad=\frac{1}{2p}\big(\partial_t -V\cdot \nabla\big)\frac{\la V\ra^{2p}}{1-B}
-\frac{\la V\ra^{2p}}{2p(1-B)^2}\big(\partial_t -V\cdot \nabla\big)B.
\end{align*}
Thus, using the equation for $B$ and recalling that
$$
G(h)B=-\cnx V,
$$
we deduce that
$$
\partial_t B-V\cdot \nabla B =\gamma-(1-B)G(h)B=\gamma+(1-B)\cnx V.
$$
Consequently,
$$
\frac{\la V\ra^{2p-2}V}{1-B}\cdot \big(\partial_t V-V\cdot \nabla V\big)
=\frac{1}{2p}\Big(\big(\partial_t -V\cdot \nabla\big)\frac{\la V\ra^{2p}}{1-B}-\frac{\gamma \la V\ra^{2p}}{(1-B)^2}
-\frac{\la V\ra^{2p}}{(1-B)}\cnx V\Big),
$$
which yields
$$
\frac{\la V\ra^{2p-2}V}{1-B}\cdot\big(\partial_t V-V\cdot \nabla V\big)
=\frac{1}{2p}\bigg(\partial_t\frac{\la V\ra^{2p}}{1-B} -\cnx \Big(\frac{\la V\ra^{2p}V}{1-B}\Big)-\frac{\gamma \la V\ra^{2p}}{(1-B)^2}
\bigg).
$$
On the other hand, using the equation for $V$, one has
$$
\partial_t V-V\cdot \nabla V =-(1-B)G(h)V-\frac{\gamma}{1-B} V,
$$
which implies that
$$
\frac{\la V\ra^{2p-2}V}{1-B}\cdot\big(\partial_t V-V\cdot \nabla V\big)=-\la V\ra^{2p-2}V \cdot G(h)V
-\frac{\gamma \la V\ra^{2p}}{(1-B)^2}.
$$
By combining these two formulas we obtain that
$$
\frac{1}{2p}\partial_t\frac{\la V\ra^{2p}}{1-B}
-\frac{1}{2p}\cnx \Big(\frac{\la V\ra^{2p}V}{1-B}\Big)
-\frac{1}{2p}\frac{\gamma \la V\ra^{2p}}{(1-B)^2}=
-\la V\ra^{2p-2}V \cdot G(h)V
-\frac{\gamma \la V\ra^{2p}}{(1-B)^2},
$$
so
$$
\frac{1}{2p}\partial_t\bigg(\frac{\la V\ra^{2p}}{1-B}\bigg)
+\la V\ra^{2p-2}V \cdot G(h)V+\left(1-\frac{1}{2p}\right)\frac{\gamma \la V\ra^{2p}}{(1-B)^2}
=\frac{1}{2p}\cnx \bigg(\frac{\la V\ra^{2p}V}{1-B}\bigg).
$$
We deduce the desired result~\e{nV2p} by integrating in $x$ 
the previous identity.
\end{proof}

We are now in position to prove decay estimates for the $L^p$-norms of $V$. 
\begin{prop}\label{T:VLpbis}
Assume that $n=1$. Let $h\in C^\infty([0,T]\times \xT)$ be a solution to $\partial_th+G(h)h=0$.

\begin{enumerate}[i)]
\item If 
\be\label{assu:slopep=1}
\forall x \in \xT,\qquad  \la h_x(0,x)\ra\le 1,
\ee
where $h_x=\partial_xh$, then, for all time $t$ in $[0,T]$,
\be\label{n134}
\fract \int_{\xT} \frac{V(t,x)^{2}}{1-B(t,x)}\dx \le 0.
\ee
\item\label{T:Vpii} Consider an even integer $p\in 2(\xN\setminus\{0\})$. If
\be\label{assu:slope}
\forall x \in \xT,\qquad \la h_x(0,x)\ra\le \sqrt{\frac{p}{3p-2}},
\ee
then, for all time $t$ in $[0,T]$, there holds
\be\label{n141}
\fract \int_{\xT} \frac{V(t,x)^{2p}}{1-B(t,x)}\dx +2\int_{\xT} V^p G(h)(V^p)\dx\le 0.
\ee
\end{enumerate}
\end{prop}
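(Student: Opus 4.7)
The plan is to combine the conservation law~\eqref{nV2p} of Proposition~\ref{P4.5} with the dimension-one identity
$\gamma = \frac{2}{1+h_x^2}\bigl(G(h)(V^2) - 2V\, G(h)V\bigr)$
recalled in Section~\ref{S:convexintro}. Substituting this expression for $\gamma$ and using the identity $\frac{V^2}{(1-B)^2(1+h_x^2)} = \frac{h_x^2}{1+h_x^2}$ (which follows from $V=(1-B)h_x$) to rewrite the $\gamma$-weights, the conservation law takes the form
$$\fract \int_{\xT} \frac{V^{2p}}{1-B}\dx + \int_{\xT} \beta\, V^{2p-1} G(h) V \dx + 2(2p-1)\int_{\xT} V^{2p-2} \frac{h_x^2}{1+h_x^2} G(h)(V^2)\dx = 0,$$
where $\beta(x) := 2p - \frac{4(2p-1) h_x^2}{1+h_x^2}$. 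The bound $h_x^2 \le \frac{p}{3p-2}$ is equivalent to $\frac{h_x^2}{1+h_x^2} \le \frac{p}{2(2p-1)}$, so the slope assumption guarantees $\beta \ge 0$; this bound propagates from $t=0$ by Proposition~\ref{prop:bornederiv} (as noted in the remark following Proposition~\ref{T:Vp}).

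With $\beta \ge 0$, multiply the pointwise convexity inequality $2V G(h) V \ge G(h)(V^2)$ (Proposition~\ref{P:convexityintro} with $\Phi(r)=r^2$) by the nonnegative weight $\tfrac{1}{2}\beta V^{2p-2}$ to obtain $\beta V^{2p-1} G(h) V \ge \tfrac{1}{2}\beta V^{2p-2} G(h)(V^2)$. The algebraic miracle is that, after this substitution, the coefficient in front of $V^{2p-2} G(h)(V^2)$ simplifies to
$$\tfrac{1}{2}\beta + 2(2p-1)\frac{h_x^2}{1+h_x^2} = p - \frac{2(2p-1)h_x^2}{1+h_x^2} + \frac{2(2p-1)h_x^2}{1+h_x^2} = p,$$
so all slope-dependent terms cancel identically, leaving
$$\fract \int_{\xT} \frac{V^{2p}}{1-B}\dx + p \int_{\xT} V^{2p-2} G(h)(V^2)\dx \le 0.$$

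For part (i), $p=1$, the second integral equals $\int G(h)(V^2)\dx = 0$ by~\eqref{n140}, which gives~\eqref{n134}. For part (ii), $p \ge 2$ even, apply Proposition~\ref{P:convexityintro} a second time to $f = V^2 \ge 0$ with a $C^2$ convex extension of $r \mapsto r^{p/2}$ on $[0,\infty)$ to $\xR$ (one may take $r^{p/2}$ itself when $p \in \{2,4\}$, and $\max(r,0)^{p/2}$ when $p \ge 6$). Since $V^2 \ge 0$, this yields the pointwise inequality $\tfrac{p}{2}\, V^{p-2} G(h)(V^2) \ge G(h)(V^p)$; multiplying by $V^p \ge 0$ and integrating gives $p \int V^{2p-2} G(h)(V^2)\dx \ge 2 \int V^p G(h)(V^p)\dx$, which together with the previous bound yields~\eqref{n141}. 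The main obstacle is recognizing the algebraic structure that causes the slope-dependent terms to cancel identically after a single application of the pointwise convexity inequality; the sharp slope threshold $\sqrt{p/(3p-2)}$ enters only to ensure $\beta \ge 0$ so that the convexity step can be carried out with the correct sign.
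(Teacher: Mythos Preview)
Your proof is correct and follows essentially the same route as the paper's: both start from the conservation law~\eqref{nV2p}, substitute the one-dimensional formula $\gamma=\frac{2}{1+h_x^2}\bigl(G(h)(V^2)-2VG(h)V\bigr)$, use $V=(1-B)h_x$ to rewrite the weights, apply the pointwise convexity inequality $2VG(h)V\ge G(h)(V^2)$ once to arrive at the intermediate bound $\fract\int \frac{V^{2p}}{1-B}\dx + p\int V^{2p-2}G(h)(V^2)\dx\le 0$, and then apply convexity again (with the same choice of~$\Phi$) for part~(ii). The only organisational difference is that the paper first replaces $\frac{2h_x^2}{1+h_x^2}$ by its supremum $A$ and uses $A(2p-1)\le p$, whereas you keep the pointwise coefficient $\beta=2p-\frac{4(2p-1)h_x^2}{1+h_x^2}$ and observe the exact cancellation $\tfrac{1}{2}\beta+2(2p-1)\frac{h_x^2}{1+h_x^2}=p$; these are equivalent manipulations of the same inequality.
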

\begin{proof}
Recall that the strategy of the proof is explained in~\S\ref{S:convexintro}, and that
the bounds \eqref{assu:slopep=1} and \eqref{assu:slope} immediately extend to all times $t \in [0,T]$
in view of Proposition \ref{prop:bornederiv}. 
The first key step is then to obtain a new identity relating $\gamma$ and~$V$. 
This is where we need to restrict the problem to space dimension $n=1$.
\begin{lemm}
If $n=1$, then there holds
\be\label{n1GBGV}
G(h)(B^2)-2BG(h)B=G(h)(V^2)-2VG(h)V,
\ee
and
$$
\gamma=\frac{2}{1+h_x^2}\big(G(h)(V^2)-2VG(h)V\big).
$$
\end{lemm}
\begin{proof}
Since 
$$
\gamma=\frac{1}{1+h_x^2}\big(G(h)(B^2+V^2)-2BG(h)B-2VG(h)V\big),
$$
the identity for $\gamma$ is a straightforward consequence of~\e{n1GBGV}. 

We now prove~\e{n1GBGV}. Denote by $\phi$ the harmonic extension of 
$h$ (so that $\Delta\phi=0$ in $\Omega:=\{y<h\}$ and $\phi\ah=h$). Then, 
by definition (see~\e{defi:BVphii}), one has $V=\phi_x\ah$ 
and $B=\phi_y\ah$, where $\phi_x=\partial_x\phi$ and 
$\phi_y=\partial_y \phi$. Introduce $\varphi=\phi_x^2-\phi_y^2$. Since $\varphi$ is the real part of the holomorphic function 
$(\phi_x(x,y)+i\phi_y(x,y))^2$, it is harmonic:
$$
\Delta_{x,y}\varphi=0.
$$
On the other hand, one has
$$
\varphi\ah=V^2-B^2.
$$
It follows that 
$$
G(h)(V^2-B^2)=((\partial_y-h_x\partial_x)\varphi)\ah.
$$
Consequently, using the chain rule, one finds that
$$
G(h)(V^2)-G(h)(B^2)=((\partial_y-h_x\partial_x)(\phi_x^2-\phi_y^2))\ah=
2VG(h)V-2BG(h)B,
$$
which completes the proof.
\end{proof}

We now prove the main result. We begin by recalling that, when $n=1$, the conservation law~\e{nV2p} reads
$$
\fract \int \frac{V^{2p}}{1-B}\dx+\Sigma_1+\Sigma_2=0,
$$
where
$$
\Sigma_1=2p \int V^{2p-1} G(h)V\dx,\quad 
\Sigma_2=(2p-1)\int\frac{\gamma  V^{2p}}{(1-B)^2}\dx.
$$
We want to prove that, if $p=1$ then $\Sigma_1+\Sigma_2\ge 0$ and if $p\in 2(\xN\setminus\{0\})$ then
\be\label{n142}
\Sigma_1+\Sigma_2\ge 2\int V^p G(h)V^p\dx.
\ee
Since
$$
\gamma=\frac{2}{1+h_x^2}\big(G(h)(V^2)-2VG(h)V\big),
$$
we have
$$
-\Sigma_2=(2p-1)\int \frac{2}{1+h_x^2}\big(2VG(h)V-G(h)(V^2)\big)\frac{V^{2p}}{(1-B)^2}\dx.
$$
By definition one has $V=(1-B)h_x$, thus one may write
$$
\frac{2}{1+h_x^2}\frac{V^{2p}}{(1-B)^2}=\frac{2V^{2}}{(1-B)^2(1+h_x^2)}V^{2p-2}=\frac{2h_x^2}{1+h_x^2}V^{2p-2},
$$
to obtain
$$
-\Sigma_2=(2p-1)\int \frac{2h_x^2}{1+h_x^2}\big(2VG(h)V-G(h)(V^2)\big)V^{2p-2}\dx.
$$
Now the key point is that, in light of~\e{convex}, one has the pointwise bound
$$
2VG(h)V-G(h)(V^2)\ge 0.
$$
So, if we set
$$
A\defn\sup_{[0,T]\times \xT} \frac{2h_x^2}{1+h_x^2},
$$
then we infer that
$$
-\Sigma_2\le A(2p-1)\int \big(2VG(h)V-G(h)(V^2)\big)V^{2p-2}\dx.
$$
Now, the assumption~\e{assu:slope} on the slope implies that
$$
A<\frac{p}{2p-1},
$$
and hence,
$$
-\Sigma_2\le p\int \big(2VG(h)V-G(h)(V^2)\big)V^{2p-2}\dx,
$$
which yields, by definition of $\Sigma_1$,
\be\label{n143}
-\Sigma_2\le \Sigma_1-p\int G(h)(V^2)V^{2p-2}\dx.
\ee
Next, we use again the convexity inequality~\e{convex}. More precisely, if $p=1$ then 
it suffices to write that
$$
\int G(h)(V^2)V^{2p-2}\dx=\int G(h)(V^2)\dx=0,
$$
since $\int G(h)\psi\dx=0$ for any function $\psi$ (see~\e{n140}). 
This proves that $\Sigma_1+\Sigma_2\ge 0$ when $p=1$. 
We now prove \e{n142} assuming that $p\in 2(\xN\setminus\{0\})$. 
If $p=2$, then one has directly
$$
\int G(h)(V^2)V^{2p-2}\dx=\int V^2 G(h)(V^2)\dx\ge 0,
$$
where we used the fact that $\int \psi G(h)\psi\dx \ge 0$ for any function $\psi$ (see~\e{n217}). 
Otherwise, $p=2k$ for some integer $k\ge 2$ and hence one may consider the $C^2$-convex function~$\Phi\colon\xR\to\xR$ defined by
$$
\Phi(r)=\left\{\begin{aligned}
&r^{2}\quad &&\text{if } p=4,\\
&\max\{0,r^{p/2}\} &&\text{if }p\ge 6.
\end{aligned}
\right.
$$
Then for any $p\in 2(\xN\setminus\{0,1\})$, the inequality~\e{convex} yields
\begin{align*}
V^{p-2}G(h)V^2&=(V^2)^{p/2-1}G(h)(V^2)\\
&=\frac 2p\Phi'(V^2)G(h)(V^2)\\
&\ge \frac 2pG(h)\Phi(V^2)=\frac 2pG(h)(V^p).
\end{align*}
It follows that
$$
V^{2p-2}G(h)V^2=\left(V^p\right)  (V^{p-2})G(h)V^2
\ge \frac{2}{p}V^p G(h)V^p,
$$
which implies that
$$
\int V^{2p-2}G(h)V^2 \dx\ge  \frac{2}{p}\int V^p G(h)V^p\dx\ge 0.
$$
So the wanted inequality~\e{n142} follows from~\e{n143}. This concludes the proof.
\end{proof}

\section{On the Cauchy problem}\label{S:Cauchy}

In this section we prove Theorem~\ref{T:Cauchy}. 
For the sake of shortness, we shall only prove {\em a priori} estimates. 
The uniqueness is known to hold in a broader setting. 
The proof of the existence from {\em a priori} estimates follows by adapting the arguments given in full details in 
\cite{AL} for the Muskat equation (the two problems are similar 
since they both concern nonlinear parabolic equation of order $1$ with similar fractional diffusion). This section contains 
no new result or argument. In fact 
our goal is precisely to show that one can solve the Cauchy problem by using results already proved in~\cite{ABZ3}. 
This is possible only because 
we work with the equations for the unknowns $B,V$.

\subsection{Microlocal analysis of the Dirichlet to Neumann operator}\label{S:paraDN}

For the reader's convenience, we recall in this subsection 
various results about the Dirichlet to Neumann operator. 

If $h$ is a $C^\infty$-function, it follows from classical elliptic regularity theory that, for any 
real number $s\ge 1/2$, $G(h)$ is bounded from $H^s(\xT^n)$ into 
$H^{s-1}(\xT^n)$ (the limitation $s\ge 1/2$ comes from the fact that a function $\psi$ in $H^{1/2}(\xT^n)$ is the trace of a $H^1$ function $\phi$ 
in the fluid domain $\Omega=\{y<h(x)\}$, so that 
$G(h)\psi$ is well-defined in $H^{-1/2}(\xT^n)$ by standard variational arguments). 
This property still holds in 
the case where $h$ has limited regularity. Namely, 
for $s> 1+n/2$ and $\sigma\in [1/2,s]$, we have (see \cite{CN,WuInvent, WuJAMS,LannesJAMS} and \cite[Theorem 3.12]{ABZ3})
\be\label{n1100}
\lA G(h)\psi\rA_{H^{\sigma-1}}\le C\big(\lA h\rA_{H^{s}}\big)\lA \psi\rA_{H^{\sigma}}.
\ee

On the other hand, it is known since the work of Calder\'on that, for 
$h \in C^{\infty}(\xT^n)$, 
$G(h)$ is a pseudo-differential operator. For the sake of completness, 
recall the definition of a pseudo-differential operator $\Op(a)$ with symbol $a=a(x,\xi)$. 
Firstly, given a function $u$ in the Schwartz space, 
we define the action of  
the pseudo-differential operator $\Op(a)$ on $u$ by
$$
\Op(a)u(x)=\frac{1}{(2\pi)^{n}}\int e^{ix\cdot\xi}a(x,\xi)\widehat{u}(\xi)\dxi.
$$
Then, by a duality argument, 
$\Op(a)$ extends as a continuous operator defined 
on the space of tempered distributions, which includes the Sobolev spaces of periodic functions. 
Then one has
$$
G(h)\psi = \Op(\lambda) \psi + R_0(h)\psi,
$$
where 
\begin{equation}\label{defi.lambda1}
\lambda=\sqrt{(1+|\nabla h| ^2)\la \xi\ra^2 - (\nabla h\cdot \xi)^2},
\end{equation}
and where the remainder satisfies the following property : there exists $K$ such that, for all $s\ge 0$,
$$
\lA R_0(h)\psi\rA_{H^{s}}\le C\left( \lA h\rA_{H^{s+K}}\right)\lA \psi\rA_{H^s}.
$$
This allows to approximate $G(h)$ by $\Op(\lambda)$ 
which is an operator of order $1$, modulo the remainder $R_0(h)$ 
which is of order $0$. 
Actually, we have an approximation at any order (see \cite{ABB,SU}).

On the other hand, 
notice that the symbol $\lambda$ is well-defined for 
any~$C^1$ function~$h$. It is therefore interesting to try 
to compare $G(h)$ and $\Op(\lambda)$ when 
$h$ has a limited regularity. This is possible thanks to Bony~\cite{Bony} 
paradifferential calculus (using in addition Alinhac's paracomposition operators 
and in particular the use of the so-called {\em good unknown of Alinhac}, following~\cite{Alipara}). 
The first results in this direction are due to 
Alazard and M\'etivier~\cite{AM}, and Alazard, Burq and Zuily~\cite{ABZ1,ABZ3}, following earlier work 
by~Lannes~\cite{LannesJAMS}. In particular, it was proved in \cite{ABZ3} that one can compare $G(h)$ to an explicit operator for any $h$ which is, by Sobolev embedding, in the H\"older space $C^{1+\epsilon}$ for some $\epsilon>0$. 
To introduce this result, we need to recall the definition of paradifferential operators. 
In our case, it will be simple since we need only a few results from that theory.

Recall that, for $\rho\in (0,1)$, 
we denote by $C^{\rho}(\xT^n)$ the 
H\"older space of bounded functions which 
are uniformly H\"older continuous with 
exponent $\rho$.

\begin{defi}
Given a real number $\rho\in (0,1)$ and $m\in\xR$, 
$\dot\Gamma_{\rho}^{m}$ 
denotes the space of
symbols $a(x,\xi)$
on $\xT^n\times(\xR^n\setminus 0)$ 
which are homogeneous of degree~$m$ and 
$C^\infty$ with respect to $\xi\neq 0$, and
such that, for all $\alpha\in\xN^n$ and all $\xi\neq 0$, the function
$x\mapsto \partial_\xi^\alpha a(x,\xi)$ belongs to $C^{\rho}(\xT^n)$ 
and
\begin{equation}\label{para:10}
\sup_{|\xi|=1}\lA \partial_\xi^\alpha a(\cdot,\xi)\rA_{C^{\rho}}<+\infty.
\end{equation}
\end{defi}

Now fix a cut-off function $\zeta$ such that $\zeta=0$ on a neighborhood of the origin 
and $\zeta=1$ for $|\xi|\ge 1$. Then introduce 
a $C^\infty$ function $\chi$ homogeneous of degree $0$ and satisfying, 
for $0<\eps_1<\eps_2$ small enough,
$$
\chi(\theta,\eta)=1 \quad \text{if}\quad \la\theta\ra\le \eps_1\la \eta\ra,\qquad
\chi(\theta,\eta)=0 \quad \text{if}\quad \la\theta\ra\geq \eps_2\la\eta\ra.
$$ 
Given a symbol $a$, we define
the paradifferential operator $T_a$ by
\begin{equation}\label{eq.para}
\widehat{T_a u}(\xi)=(2\pi)^{-n}\int \chi(\xi-\eta,\eta)\widehat{a}(\xi-\eta,\eta)\zeta(\eta)\widehat{u}(\eta)
\deta,
\end{equation}
where
$\widehat{a}(\theta,\xi)$ 
is the Fourier transform of $a$ with respect to the $x$ variable.

We need only to know the following properties of paradifferential operators. 

\begin{theo}\label{theo:sc}
\begin{enumerate}
\item\label{T:C}
If~$a \in \dot\Gamma^m_0$, 
then~$T_a$ is of order~$\leo m$ (it is bounded from $H^\mu$ into $H^{\mu-m}$ for all $\mu$ in $\xR$).

\item\label{T:PSC}Let $\rho\in (0,1)$. 
If~$a\in\dot\Gamma^{m}_{\rho}, b\in \dot\Gamma^{m'}_{\rho}$ then 
$T_a T_b -T_{a b}$ is of order~$\leo m+m'-\rho$ (it is bounded from $H^\mu$ into 
$H^{\mu-m-m'+\rho}$ for all $\mu$).

\item\label{T:paraprod} 
Consider three real numbers $r,\mu,\gamma$ satisfying
$$
r+\mu>0,\quad \gamma\le r  \quad\text{and}\quad \gamma < r+\mu-\frac{n}{2}.
$$
Then for any function $a=a(x)$ (depending only on $x$) and any $u=u(x)$, 
$$
a\in H^{r}\quad\text{and}\quad u\in H^{\mu}\quad\Rightarrow\quad au - T_a u\in H^{\gamma}.
$$
\end{enumerate}
\end{theo}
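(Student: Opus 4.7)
The plan is to establish the three assertions as consequences of Littlewood--Paley theory and Bony's paradifferential calculus; since these are classical results (going back to Bony~\cite{Bony}, and expounded in the works of Meyer, H\"ormander, and M\'etivier), the proof I sketch is really a recollection of the key ideas rather than an original argument.

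For part~(\ref{T:C}), I would exploit the spectral localization built into the definition~\eqref{eq.para}: the cutoff $\chi(\xi-\eta,\eta)$ forces the Fourier support of $T_a u$ to lie, at each scale $|\eta|\sim 2^j$, inside an annulus of comparable size, so $T_a u$ decomposes into nearly orthogonal Littlewood--Paley blocks. On each block, $T_a u$ acts essentially as multiplication by a smoothed version of $a(\cdot,\xi)$ at frequency $\lesssim 2^j$, whose $L^\infty$-norm is controlled by $\sup_{|\xi|=1}\|a(\cdot,\xi)\|_{L^\infty}$, i.e.\ by the $\dot\Gamma_0^m$-seminorm with the $|\xi|^m$ factor accounting for the homogeneity. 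Plancherel and the almost-orthogonality of the blocks then give the bound $\|T_a u\|_{H^{\mu-m}}\lesssim \|u\|_{H^\mu}$.

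For part~(\ref{T:PSC}), the strategy is the classical symbolic calculus computation. One writes $T_aT_bu$ using the double Fourier integral representation and extracts the leading contribution $T_{ab}u$; the error is an operator whose kernel involves differences $a(x)-a(x-y)$ acting on $b$. Using the H\"older regularity $a,b\in \dot\Gamma^\cdot_\rho$, these differences are controlled by $|y|^\rho$ times the seminorms, which upon reinterpretation in frequency space yields an operator gaining $\rho$ derivatives over the expected order $m+m'$. Combined with part~(\ref{T:C}) applied to the residual symbols, this gives the claimed mapping property.

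For part~(\ref{T:paraprod}), I would use the Bony paraproduct decomposition $au = T_a u + T_u a + R(a,u)$, so that $au-T_a u = T_u a + R(a,u)$, and estimate each piece by dyadic decomposition. The symmetric paraproduct $T_u a$ is handled by interchanging roles of $a$ and $u$ in part~(\ref{T:C}): when $\mu \le n/2$ one gets $T_u a \in H^{r+\mu-n/2}$, while for $\mu>n/2$ one gets $T_u a \in H^r$. The remainder $R(a,u)$ is spectrally localized on pairs of dyadic blocks of comparable frequency, and summation in the dyadic index requires $r+\mu>n/2$ or a Cauchy--Schwarz argument in $\ell^2$ that needs $r+\mu>0$; the resulting regularity is $H^{r+\mu-n/2}$. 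The three constraints on $\gamma$ are then exactly what is needed to dominate both pieces. The main obstacle here is the bookkeeping of the dyadic sums at the critical endpoint $\gamma=r+\mu-n/2$, which is why the statement requires the strict inequality $\gamma<r+\mu-n/2$; controlling the boundary case would require Besov-space refinements that are unnecessary for our application.
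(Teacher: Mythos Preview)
The paper does not give a proof of this theorem: it is stated without argument as a collection of classical facts from Bony's paradifferential calculus, with the implicit reference being~\cite{Bony} and the monograph~\cite{MePise}. Your sketch is a correct outline of the standard proofs one finds in those references, so there is nothing to compare---you have simply supplied what the paper chose to quote as a black box.
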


As explained above, the paradifferential calculus allows to compare $G(h)$ to an explicit operator. 
For our purposes, it will suffice to use the following

\begin{prop}[from~\cite{ABZ3}]\label{coro:paraDN1}
Let~$n\ge 1$. Consider real numbers $s,\sigma,\eps$ such that
$$
s>1+\frac{n}{2},\qquad \frac{1}{2}\leq \sigma \leq s-\mez,\qquad 
0<\eps\leq \mez, \qquad \eps< s-1-\frac{n}{2}.
$$
Then there exists a non-decreasing function~$\mathcal{F}\colon\xR_+\rightarrow\xR_+$ such that 
$$
R(h)f\defn G(h)f-T_\lambda f
$$
satisfies
\begin{equation*}
\lA R(h)f\rA_{H^{\sigma-1+\eps}(\xT^n)}\le \mathcal{F} \bigl(\| h \|_{H^{s}(\xT^n)}\bigr)\lA f\rA_{H^{\sigma}(\xT^n)}.
\end{equation*}
\end{prop}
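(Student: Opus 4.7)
The plan is to follow the paralinearization approach of \cite{ABZ1,ABZ3}, whose core steps I outline below. Since the identity is purely ``elliptic'' (time plays no role), I fix $t$ and view $h$ as a function of $x$ only, with $\phi$ the variational solution of \e{n2.2} taking boundary value $f$.

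First, I would straighten the free boundary by the change of variables $(x,y)\mapsto (x,z)$ with $y=z+\rho(x,z)$, where $\rho$ is a regularizing extension of $h$ chosen so that $\{z<0\}$ is mapped to $\Omega$ and $\rho$ is smoother than $h$ in the interior (a standard construction using Littlewood--Paley). Setting $\varphi(x,z)=\phi(x,z+\rho(x,z))$, the harmonic equation $\Delta_{x,y}\phi=0$ becomes, in the strip $\{z<0\}$, an elliptic equation of the form
\be\label{plan:ell}
(\partial_z^2+\alpha\Delta_x+\beta\cdot\nabla_x\partial_z-\delta\,\partial_z)\varphi=0,
\ee
with coefficients $\alpha,\beta,\delta$ that are explicit rational functions of $\nabla\rho,\partial_z\rho$, hence belonging to Sobolev spaces matching the regularity of $h$. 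Since $\phi\ah=f$, one reads $G(h)f$ from $\varphi$ via
$G(h)f=(1+|\nabla h|^2)^{-1}\bigl((1+|\nabla h|^2)\partial_z\varphi-\nabla h\cdot\nabla_x\varphi\bigr)\arrowvert_{z=0}$ up to cancellations coming from the change of variables.

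The second step is to paralinearize \e{plan:ell}. Using Theorem~\ref{theo:sc}.\ref{T:paraprod} to substitute products by paraproducts (and using Bony's linearization $F(\nabla\rho)=T_{F'(\nabla\rho)}\nabla\rho+\text{smoother}$) one rewrites the equation as $\mathcal{P}\varphi=R$, where $\mathcal P$ is a paradifferential operator with symbol of degree $2$ and $R$ is more regular by $\eps$. The key algebraic step, following Alinhac, is to work not with $\varphi$ but with the \emph{good unknown}
\[
u=\varphi-T_{\partial_z\varphi}\rho,
\]
for which the commutator between the paraproducts in the coefficients and $\partial_z$ is tamed. One then factors the principal part as a product of two first-order paradifferential operators: symbolically one seeks symbols $A,a$ of degree $1$ with $\partial_z^2+\alpha|\xi|^2-i\beta\cdot\xi\,\partial_z=(\partial_z-T_a)(\partial_z-T_A)+\text{order }1-\eps$, which by the symbolic calculus (Theorem~\ref{theo:sc}.\ref{T:PSC}) reduces to solving a quadratic equation whose positive root is exactly the symbol $\lambda$ of \e{defi.lambda1}.

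Once the factorization is in hand, I would apply the first-order forward parabolic operator $\partial_z-T_A$ to $u$, show that $(\partial_z-T_a)\bigl[(\partial_z-T_A)u\bigr]$ is $H^{\sigma-1+\eps}$ up to the boundary, and use the backward regularization (since $\RE A<0$ corresponds to the outgoing root at $z=0$) to propagate this regularity down to $\{z=0\}$. Reading the trace $\partial_z\varphi\arrowvert_{z=0}$ yields $G(h)f=T_\lambda f+R(h)f$ with the claimed remainder estimate. The main obstacle is the interplay between the low regularity of $h$ (only $C^{1+\eps'}$ for $\eps'>0$ via Sobolev embedding, whence the constraint $\eps<s-1-n/2$) and the symbolic calculus of Theorem~\ref{theo:sc}.\ref{T:PSC}, which only provides a gain of $\rho<1$; this forces the restriction $\eps\le 1/2$ and requires careful choice of the smoothing parameter in the construction of $\rho$, plus remainder estimates of paraproducts that are uniform in the frequency cut-off. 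These technical points are exactly where the argument of \cite{ABZ3} is used verbatim.
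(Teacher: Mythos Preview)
Your outline is essentially the strategy of \cite{ABZ3}, and it is correct in spirit; but note that the paper does not actually reprove this proposition: it simply quotes the result from \cite{ABZ3} and remarks that the same proof carries over from $\xR^n$ to $\xT^n$. So there is nothing to compare beyond saying that what you sketched is precisely the argument being cited.
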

\begin{rema}
In \cite{ABZ3} this result is proved for Sobolev spaces over $\xR^n$ 
but the same proof applies for periodic functions.
\end{rema}

\subsection{Paralinearization of the Hele-Shaw equation}

Inspired by the analysis for the water wave problem (\cite{AM,ABZ1,ABZ3}) or the Muskat equation~(\cite{AL}), 
we study the Cauchy problem for the Hele-Shaw equation by 
paralinearizing the latter equation. To do this, the trick is to work with the equations for $B$ and $V$ 
instead of using the equation for $h$. 
By so doing, we are led to consider a case where the Dirichlet-to-Neumann operator $G(h)$ is applied on a function which is $1/2$-derivative less regular than $h$. 

We want to prove {\em a priori} estimates for Sobolev norms of $h$. 
Fix $T>0$ and set $I=[0,T]$. We consider a solution $h\in C^1(I;H^s(\xT^n))$ to $\partial_t h+G(h)h=0$. 
To prove parabolic estimates, we introduce the following spaces: given~$\mu\in\xR$, set
\begin{equation}\label{XY}
X^\mu(I)=C^0(I;H^{\mu}(\xT^n))
\cap L^2(I;H^{\mu+\mez}(\xT^n)).
\end{equation}

We want to estimate $\lA h\rA_{X^s(I)}$. 
Since we have an estimate in $X^0(I)$ by using the energy inequality~\e{Energy}, 
$$
\mez\fract\int h^2\dx +\int h G(h)h \dx =0,
$$
it will suffice to estimate the $X^{s-1}(I)$-norm of $\nabla h=\nabla_x h$. To do so, we exploit the fact that
\be\label{n2001}
\nabla h =\frac{V}{1-B}.
\ee
Recall that the Rayleigh-Taylor coefficient $1-B$ is positive so that we can divide by $1-B$. 
By compactness of $\xT^n$, this coefficient is bounded from below by a positive constant $\underline{a}$ at initial time and hence, by a continuity argument, we can 
assume that $a$ is bounded from below by $\underline{a}/2$ to 
prove {\em a priori} estimates\footnote{We do not need  to (and in fact we 
cannot) apply directly the conclusion of Theorem~\ref{T:Bp} to infer that $1-B(t,x)\ge \underline{a}$ 
for all time $t$. This is because we do not have a similar result for an iterative scheme converging to the solution.}. Now, since $s-1>n/2$, the Sobolev spaces $H^{s-1}(\xT^n)$ and $H^{s-\mez}(\xT^n)$ are algebras, and we have the classical Moser tame estimates for $\sigma>n/2$,
\be\label{Moser}
\lA uv\rA_{H^\sigma}\les \lA u\rA_{L^\infty}\lA v\rA_{H^\sigma}+\lA v\rA_{L^\infty}\lA u\rA_{H^\sigma}.
\ee
This easily implies that, to estimate $\nabla h$ in $X^{s-1}(I)$, 
it will suffice to estimate $B$ and $V$ and $X^{s-1}(I)$. 
To do this, we shall paralinearize their equations.

\begin{lemm}Let~$n\ge 1$ and consider real numbers $s,\eps$ such that
$$
s>1+\frac{n}{2},\qquad 
0<\eps\leq \mez, \qquad \eps< s-1-\frac{n}{2}.
$$
Assume that $h$ is a smooth solution of the Hele-Shaw equation. Then 
$$
\partial_t B-T_V\cdot\nabla B+T_{a\lambda }B=F_1,
$$
and 
$$
\partial_t V-T_V\cdot\nabla V+T_{a\lambda }V=F_2
$$
where, for any time $t$,
\be\label{n1008}
\lA F_1(t)\rA_{H^{s-\tdm+\eps}}+\lA F_2(t)\rA_{H^{s-\tdm+\eps}}\le \mathcal{F}\big(\lA h(t)\rA_{H^s}\big)\big(1+\lA (B(t),V(t))\rA_{H^{s-\mez}}\big),
\ee
for some nondecreasing function $\mathcal{F}\colon \xR_+\rightarrow\xR_+$.
\end{lemm}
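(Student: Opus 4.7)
The plan is to start from the exact equations of Proposition~\ref{Prop:eqBV} and paralinearize every nonlinear term using Bony's decomposition, the symbolic calculus of Theorem~\ref{theo:sc}, and the paralinearization of $G(h)$ provided by Proposition~\ref{coro:paraDN1}. After this process the leading operator $\partial_t - T_V\cdot\nabla + T_{a\lambda}$ should emerge, with all remaining contributions going into $F_1$ (resp.~$F_2$). For the transport term I would use Bony's decomposition
$$
V\cdot\nabla B = T_V\cdot\nabla B + T_{\nabla B}\cdot V + R(V,\nabla B),
$$
and similarly for $V\cdot\nabla V$; using $V\in H^{s-\mez}$, $\nabla B\in H^{s-\tdm}$ together with items 1 and 3 of Theorem~\ref{theo:sc}, the last two terms are controlled in $H^{s-\tdm+\eps}$.

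Next I would treat the principal term $(1-B)G(h)B$. Setting $a=1-B$, Bony's decomposition first gives
$$
aG(h)B = T_a G(h)B + T_{G(h)B}\, a + R(a,G(h)B).
$$
Applying Proposition~\ref{coro:paraDN1} with $\sigma=s-\mez$ I substitute $G(h)B = T_\lambda B + R(h)B$ inside the first paraproduct; the symbolic calculus (item 2 of Theorem~\ref{theo:sc}) then yields $T_a T_\lambda = T_{a\lambda}+E$ where $E$ is of order $1-\rho$ for some positive H\"older exponent $\rho$. All terms other than $T_{a\lambda}B$ are absorbed into $F_1$. The $V$-equation is treated identically component by component, and the positivity $1-B\ge\underline{a}/2$ allows to handle the zeroth order coefficient $\gamma/(1-B)$ appearing there by a simple product estimate.

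For the source $\gamma$ I would rely on its explicit expression~\e{ngammadef}: $\gamma$ is a finite sum of products of $B,V$ with $G(h)$ applied to the quadratic quantities $B^2+\la V\ra^2$, $B$, $V$. The bound~\e{n1100} with $\sigma=s-\mez$, together with the tame Moser estimate~\e{Moser} and the algebra structure of $H^{s-\mez}(\xT^n)$, yields the desired control of $\gamma$ in $H^{s-\tdm+\eps}$. The main technical obstacle is the careful bookkeeping of remainder regularities, and in particular item 2 of Theorem~\ref{theo:sc}: the gain $\rho$ in the composition $T_a T_\lambda$ must strictly exceed $\eps$. This is ensured precisely by the hypothesis $\eps<s-1-n/2$, which by Sobolev embedding gives $a\in C^\rho$ for some $\rho>\eps$, so that $a\lambda\in\dot\Gamma^1_\rho$ and the composition error is of order $1-\rho<1-\eps$. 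Everything else reduces to repeated paraproduct and Moser estimates.
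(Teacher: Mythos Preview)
Your treatment of the transport term and of the principal term $aG(h)B$ is correct and coincides with the paper's argument (the paper packages your Bony decomposition as item~\ref{T:paraprod} of Theorem~\ref{theo:sc}). The gap is in your handling of $\gamma$.

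You write that \e{n1100} with $\sigma=s-\mez$, Moser's estimate~\e{Moser}, and the algebra property of $H^{s-\mez}$ place $\gamma$ in $H^{s-\tdm+\eps}$. They do not: those tools only yield $\gamma\in H^{s-\tdm}$. With $B,V\in H^{s-\mez}$ one has $B^2+\la V\ra^2\in H^{s-\mez}$, and \e{n1100} with $\sigma=s-\mez$ places $G(h)(B^2+\la V\ra^2)$ in $H^{s-\tdm}$, no better. Likewise $G(h)B,\,G(h)V\in H^{s-\tdm}$, and multiplying by $B,V\in H^{s-\mez}\hookrightarrow L^\infty$ keeps the products in $H^{s-\tdm}$. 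A product cannot exceed the regularity of its least regular factor, so no combination of product rules manufactures the extra $\eps$. The same deficiency then contaminates the term $\frac{\gamma}{1-B}V$ in the $V$-equation.

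The missing idea is that the specific combination defining $\gamma$ enjoys a \emph{cancellation}. After replacing $G(h)$ by $T_\lambda$ via Proposition~\ref{coro:paraDN1} (those remainders already gain $\eps$) and paralinearizing the quadratic terms ($B^2\sim 2T_B B$, $BT_\lambda B\sim T_B T_\lambda B$, and similarly for $V$), one is left with
$$
G(h)(B^2)-2BG(h)B \sim 2\big(T_\lambda T_B - T_B T_\lambda\big)B = 2[T_\lambda,T_B]B,
$$
and analogously for the $V$-terms. By item~\ref{T:PSC} of Theorem~\ref{theo:sc} with $(m,m',\rho)=(1,0,\eps)$ (using $B\in H^{s-1}\hookrightarrow C^\eps$ since $\eps<s-1-n/2$), this commutator is of order $1-\eps$ and hence maps $H^{s-\mez}$ into $H^{s-\tdm+\eps}$. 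Without this commutator step the lemma does not close, and in the downstream application to Proposition~\ref{prop:max} you would be forced to take $p=2$, losing the small factor $T^\theta$ needed to absorb the right-hand side of the a priori estimate.
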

\begin{rema}
This means that $B$ and $V$ solve parabolic evolution equations of order $1$ with remainder terms of order $1-\eps$. 
These remainder terms are harmless since they can be absorbed by classical interpolation arguments and energy estimates.
\end{rema}
\begin{proof}
We say that $F$ is an admissible remainder provided that the $H^{s-\tdm+\eps}(\xT^n)$-norm of $F(t)$ is bounded by 
$\mathcal{F}\big(\lA h(t)\rA_{H^s}\big)\big(1+\lA (B(t),V(t))\rA_{H^{s-\mez}}\big)$ (since all our estimates are pointwise in time, 
we will skip the time dependence in this proof). Given two expressions $A_1,A_2$ 
depending on $h$, we write $A_1\sim A_2$ 
to say that $A_1-A_2$ is an admissible remainder. 
We shall make extensive use of the estimate
\be\label{n1215}
\lA B\rA_{H^{s-1}}+\lA V\rA_{H^{s-1}}
\le \mathcal{F}\big(\lA h\rA_{H^s}\big)\lA h\rA_{H^s},
\ee
which follows, for instance, from the relations
$$
B= \frac{G(h)h+\la \partialx h\ra^2}{1+|\partialx  h|^2},
\qquad
V=(1-B)\partialx h,
$$
using also the estimate~\e{n1100} for the Dirichlet to Neumann operator and the product rule~\e{Moser}.

Recall that~$B$, $V$ and $\gamma$ satisfy
\begin{align}
&\partial_t B-V\cdot \nabla B +aG(h)B=\gamma,\label{n1001}\\
&\partial_t V-V\cdot \nabla V +aG(h)V+\frac{\gamma}{1-B} V=0,\label{n1002}\\
&\gamma=\frac{1}{1+|\nabla h|^2}\Big(G(h)\big(B^2+|V|^2\big)-2BG(h)B-2V\cdot G(h)V\Big).\label{n1003}
\end{align}

Directly from the classical paralinearization formula for products 
(see point~\ref{T:paraprod} in Theorem~\ref{theo:sc} applied with $(r,\mu,\gamma)=(s-1,s-3/2,s-3/2+\eps)$), 
one has
$$
V\cdot \nabla B\sim T_V\cdot \nabla B,
$$
and
$$
V\cdot \nabla V\sim T_V\cdot\nabla V.
$$
Similarly, since \e{n1100} implies that $G(h)B$ and $G(h)V$ belong to 
$H^{s-3/2}(\xT^n)$, we have
$$
aG(h)B\sim T_a G(h)B,\quad aG(h)V\sim T_a G(h)V.
$$
On the other hand it follows from Proposition~\ref{coro:paraDN1} applied with $\sigma=s-1/2$, that
$$
G(h)B\sim T_\lambda B.
$$
Consequently, by using the 
classical results from paradifferential calculus (namely the continuity property of paradifferential operators and symbolic calculus, see points \ref{T:C} and \ref{T:PSC} in Theorem~\ref{theo:sc}), we 
successively verify that
$$
T_a G(h)B\sim T_a T_\lambda B\sim T_{a\lambda}B.
$$
Similarly, one has $aG(h)V\sim T_{a\lambda}V$. We thus have proved that
\begin{align*}
&\partial_t B-T_V\cdot\nabla B+T_{a\lambda }B\sim \gamma
&\partial_t V-T_V\cdot\nabla V+T_{a\lambda }V\sim -\frac{\gamma}{1-B} V.
\end{align*}
It remains only to prove that the above right-hand sides are equivalent to $0$. 
To do so, it suffices to prove that
$\gamma\sim 0$. Indeed, since $s-1>n/2$, 
$$
\lA \frac{\gamma}{1-B} V\rA_{H^{s-\tdm+\eps}}=
\lA \gamma \nabla h\rA_{H^{s-\tdm+\eps}}
\le \lA \gamma\rA_{H^{s-\tdm+\eps}}\lA \nabla h\rA_{H^{s-1}}
$$
and hence the relation $\gamma\sim 0$ will imply that $\frac{\gamma}{1-B} V
\sim 0$. So we only have to prove that $\gamma\sim 0$. 
In view of the definition of $\gamma$ and using again the product rule in Sobolev space, 
this will be a consequence of 
$$
G(h)\big(B^2+|V|^2\big)-2BG(h)B-2V\cdot G(h)V\sim 0.
$$
To prove the latter result, we use again the product rule in Sobolev spaces \e{Moser}, the bound \e{n1215} and Proposition~\ref{coro:paraDN1} to infer that
$$
G(h)\big(B^2+|V|^2\big)-2BG(h)B-2V\cdot G(h)V\sim T_\lambda \big(B^2+|V|^2\big)-2BT_\lambda B-2V\cdot T_\lambda V.
$$
We then paralinearize the products:
$$
B^2+|V|^2\sim 2T_B B+2T_V\cdot V,\quad BT_\lambda B\sim T_BT_\lambda B,\quad V\cdot T_\lambda V\sim T_V\cdot T_\lambda V.
$$
We conclude thanks to symbolic calculus (see point~\ref{T:PSC} in Theorem~\ref{theo:sc} applied with $(m,m',\rho)=(1,0,\eps)$) 
that
$$
[T_\lambda, T_B]B\sim 0,\quad [T_\lambda, T_V]\cdot V\sim 0.
$$
This terminates the proof of $\gamma\sim 0$, which completes the proof of the lemma.
\end{proof}

We are now in position to apply immediately another result proved in \cite{ABZ3} for 
paradifferential parabolic evolution equations. 
%

\begin{prop}(see Prop. $2.18$ in \cite{ABZ3})\label{prop:max}
Let~$r\in \xR$,~$\rho\in (0,1)$,~$I=[0,T]$ and consider a time dependent symbol 
$p=p(t;x,\xi)$ 
bounded from~$I$ 
into~$\dot{\Gamma}^1_\rho(\xT^n)$, that is
\begin{equation}\label{n1009}
\mathcal{M}^1_\rho(p)=\sup_{t\in I}
\sup_{\la\alpha\ra\le 2(n+2) +\rho  ~}\sup_{\la\xi\ra =1}
\lA \partial_\xi^\alpha p(t;\cdot,\xi)\rA_{C^{\rho}(\xT^n)}<+\infty,
\end{equation}
and
satisfying
\be\label{n1010}
\RE p(t;x,\xi) \geq c \la\xi\ra,
\ee
for some positive constant~$c$. Then for any source term
$$
f\in L^p(I;H^{r-1+\frac1p}(\xT^n))\quad\text{with}\quad p\in [1,+\infty),
$$
and any intial data~$w_0\in H^{r}(\xT^n)$, there exists~$w\in X^{r}(I)$ solution of  the parabolic evolution equation
\begin{equation}\label{eqW}
\partial_t w + T_p w =f,\quad w\arrowvert_{t=0}=w_0,
\end{equation}
satisfying 
\begin{equation*}
\lA w \rA_{X^{r}(I)}\le K\left\{\lA w_0\rA_{H^{r}}+ \lA f\rA_{L^p(I;H^{r-1+\frac1p}(\xT^n))}\right\},
\end{equation*}
for some positive constant~$K$ depending only on~$r,\rho,c$ and~$\mathcal{M}^1_\rho(p)$.
Furthermore, this solution is unique in ~$X^s(I)$ for any~$s\in \xR$. 
\end{prop}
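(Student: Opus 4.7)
The approach is the standard route for parabolic paradifferential equations: derive an \emph{a priori} energy estimate from a Gårding-type lower bound for $T_p$, then conclude existence by an approximation scheme and uniqueness directly from that estimate.

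First, the key analytic ingredient is a Gårding inequality: for any $r \in \xR$,
\[
\RE (T_p w, w)_{H^r} \geq c' \lA w\rA_{H^{r+\mez}}^2 - C\lA w\rA_{H^r}^2,
\]
with $c', C$ depending only on $c$, $r$, and $\mathcal{M}^1_\rho(p)$. One obtains this by setting $q := \sqrt{\RE p} \in \dot\Gamma^{\mez}_\rho$, applying the symbolic calculus of point~\ref{T:PSC} in Theorem~\ref{theo:sc} to write $T_{\RE p} = T_q^* T_q + R$ with $R$ of order $1-\rho$, and absorbing the remainder into the $H^r$-error thanks to the strict gain of regularity ($\rho > 0$).

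Second, testing the equation against $w$ in $H^r$ yields the differential inequality
\[
\mez \fract \lA w\rA_{H^r}^2 + c' \lA w\rA_{H^{r+\mez}}^2 \leq C\lA w\rA_{H^r}^2 + \bigl|(f,w)_{H^r}\bigr|.
\]
When $p=2$, the source term is controlled by $H^{r-\mez}/H^{r+\mez}$ duality, Young's inequality, and Gronwall's lemma, delivering the full $X^r(I)$ bound. For general $p \in [1,\infty)$, one invokes the maximal $L^p$-regularity of the parabolic operator $\partial_t + T_p$: since $T_{\RE p}$ is a positively elliptic paradifferential operator of order one, it generates an analytic semigroup admitting a bounded $H^\infty$-functional calculus on the tower of Sobolev spaces, hence provides the sharp mapping $L^p(I; H^{r-1+1/p}) \to X^r(I)$. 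Alternatively, this is proved by a direct Littlewood--Paley/kernel estimate exploiting the parabolic smoothing of each dyadic block, as carried out in \cite{ABZ3}.

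Third, for existence I would regularize the symbol $p$ into a smooth family $p_\eps$ (mollifying in $x$), solve the smoothed problem by standard linear theory (Galerkin truncation, or a classical semigroup construction since $T_{p_\eps}$ becomes a genuine pseudodifferential operator), and pass to a weak-$\ast$ limit in $L^\infty(I; H^r) \cap L^2(I; H^{r+\mez})$ using that $\mathcal{M}^1_\rho(p_\eps) \leq \mathcal{M}^1_\rho(p)$ and that the Gårding constant is uniform in $\eps$. Uniqueness is then immediate: the difference of two solutions solves the homogeneous equation with zero initial data, and the energy estimate forces it to vanish in $X^r(I)$, hence in any $X^s(I)$ by the uniqueness statement of the underlying Cauchy problem.

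The main obstacle is the maximal $L^p$-regularity for $p \neq 2$, since the plain $L^2$-energy method does not directly deliver the sharp $L^p(I; H^{r-1+1/p}) \to X^r(I)$ gain of a half-derivative; one must either invoke an abstract sectorial/$H^\infty$-calculus result or follow the hands-on Littlewood--Paley semigroup argument of \cite{ABZ3}. A secondary technical point is that the symbol has only Hölder regularity in $x$, which restricts the symbolic calculus to a finite number of exact symbol compositions and dictates the specific quantitative dependence on the $2(n+2)+\rho$ derivatives in $\xi$ controlled by \eqref{n1009}.
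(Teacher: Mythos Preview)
The paper does not actually prove this proposition: it is quoted verbatim as Proposition~2.18 of \cite{ABZ3} and used as a black box. So there is no ``paper's own proof'' to compare against here; the authors simply invoke the result to close their a priori estimates.

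That said, your sketch is a faithful outline of how such results are established, and in particular of the argument in \cite{ABZ3}. The G\r{a}rding inequality via $q=\sqrt{\RE p}$ and symbolic calculus, the $L^2$-energy estimate delivering the $X^r$-bound when $p=2$, and the approximation/uniqueness steps are all standard and correct. You have also correctly identified the genuine technical point: the sharp gain $L^p(I;H^{r-1+1/p})\to X^r(I)$ for $p\neq 2$ does not drop out of the naive energy method and requires either an abstract maximal-regularity argument or the hands-on dyadic semigroup estimate carried out in \cite{ABZ3}. One small caveat: invoking a bounded $H^\infty$-calculus for $T_{\RE p}$ on the Sobolev scale is heavier machinery than what \cite{ABZ3} actually uses, and making that rigorous for a paradifferential operator with merely $C^\rho$ coefficients is not entirely trivial; the direct Littlewood--Paley route you mention as the alternative is closer to what is done there and is the cleaner option.
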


We apply this proposition with
$$
p(t;x,\xi)=-iV(t,x)\cdot \xi+a(t,x)\lambda(t;x,\xi),
$$
where recall that 
$
\lambda=\sqrt{(1+|\nabla h| ^2)\la \xi\ra^2 - (\nabla h\cdot \xi)^2}$. 
Then \e{n1009} is clearly satisfied with $\rho=\eps$ 
and, since $\RE p=a\lambda$, 
the assumption~\e{n1010} is also satisfied. 

Now, the key point is that the estimate \e{n1008} for $(F_1,F_2)$ means 
$F_1$ and $F_2$ are estimated in 
$L^2(I;H^{s-\tdm+\eps}(\xT^n))$ in terms of the $X^{s-1}(I)$-norms of $B,V$. 
As a result, 
by using the previous proposition with $r=s-1$ and $p\in [1,2)$ chosen so that 
$$
r-1+\frac1p=s-\tdm+\eps,
$$
it follows from the H\"older inequality in time that there exists $\theta\in (0,1)$ (in fact $\theta=\eps$) 
such that 
$B$ and $V$ satisfy an {\em a priori} estimate of the form
\begin{align*}
\lA (B,V)\rA_{X^{s-1}(I)}&\le K \lA (B(0),V(0))\rA_{H^{s-1}(\xT^n)}\\
&\quad+T^\theta
\mathcal{F}\big(\lA h\rA_{L^\infty(I;H^s)}\big)\big(1+
\lA (B,V)\rA_{X^{s-1}(I)}\big).
\end{align*}
Then, as explained above (see the discussion following~\e{n2001}),
$$
\lA h\rA_{X^{s}(I)}\le K \lA h_0\rA_{H^{s}(\xT^n)}+T^\theta
\mathcal{F}\big(\lA h\rA_{L^\infty(I;H^s)}\big)\big(1+
\lA h\rA_{X^{s}(I)}\big).
$$
This shows that, for $T$ small enough, one has a uniform estimate for $h$. Which concludes the analysis.

\subsection*{Acknowledgements}
We thank the reviewers for their careful readings of the manuscript. 
T.A.\ and D.S.\ acknowledge the support of the SingFlows project, 
grant ANR-18-CE40-0027 of the French National Research Agency (ANR).

\clearpage


\noindent\textbf{Thomas Alazard}\\
\noindent Universit\'e Paris-Saclay, ENS Paris-Saclay, CNRS,\\
Centre Borelli UMR9010, avenue des Sciences, 
F-91190 Gif-sur-Yvette\\

\vspace{3mm}

\noindent\textbf{Nicolas Meunier}\\
\noindent Laboratoire de Math{\'e}matiques et Mod{\'e}lisation d'Evry (LaMME), UMR 
8071, Universit{\'e} d'Evry Val d'Essonne, 23 Boulevard de France 91037, Evry, France

\vspace{3mm}

\noindent\textbf{Didier Smets}\\
\noindent Laboratoire Jacques-Louis Lions, Sorbonne Universit\'e, 4 Place Jussieu 75005 Paris, France

\end{document}